\newtheorem{theorem}{Theorem}[section]
\newtheorem{proposition}[theorem]{Proposition}
\newtheorem{lemma}[theorem]{Lemma}
\newtheorem{corollary}[theorem]{Corollary}
\newtheorem*{theorem-densemain}{Theorem \ref{densemain}}
\theoremstyle{definition}
\newtheorem{definition}[theorem]{Definition}
\newtheorem{observation}[theorem]{Observation}
\newtheorem{example}[theorem]{Example}
\theoremstyle{remark}
\numberwithin{equation}{section}
\noindent\textsc{#1.\/}}%
\theoremstyle{definition}
\newtheorem{case}{Case}
\newtheorem{subcase}{Case}[case]
\newtheorem{subsubcase}{Case}[subcase]
\definecolor{dkgreen}{rgb}{0,0.7,0.1}
\long\def\ignore#1{}
\def\usg#1{#1_{\fam0 us}}
\def\sg#1{#1_{\fam0 s}}
\def\realline{\hbox to \hsize}% plain TeX \line
\def\cC{\mathcal{C}}
\def\cT{{\mathcal{T}}}
\def\cA{\mathcal{A}}
\def\mw{\!\cdot\!}% merge walks 
\let\al\alpha
\let\ga\gamma
\let\ep\varepsilon
\let\la\lambda
\let\nr\rho
\let\nw\omega
\def\hf{{\textstyle\frac12}}
\def\indeg{{\fam0 indeg}}
\def\outdeg{{\fam0 outdeg}}
\let\fw\gamma% following half-arc in circuit decomposition
\def\iv{^{-1}}% inverse
\let\mate\omega% other end of edge
\let\incv\psi% vertex to which half-edge/arc incident
\def\cW{\mathcal{W}}
\def\mR{\mathbb{R}}
\def\mZ{\mathbb{Z}}
\def\only#1{{#1{}0}}
\def\also#1{{#1{}1}}
\def\any#1{}% remove this notation
\let\ov\overline
\def\siran#1{\v{S}ir\'{a}\v{n}}% use as "\siran."
\def\nebesky#1{Nebesk\'{y}}% use as "\nebesky."
\journal{To be decided}
\newtoks\datetoshow\datetoshow={\today}
\def\setdatetoshow#1{\global\datetoshow={#1}}
\def\ps@pprintTitleNoSubTo{%
     \let\@oddhead\@empty
     \let\@evenhead\@empty
     \def\@oddfoot{\footnotesize\itshape\hfill\the\datetoshow}%
     \let\@evenfoot\@oddfoot}
\let\ps@pprintTitle\ps@pprintTitleNoSubTo% uncomment for no "Sub to ..."
\begin{document}

\begin{frontmatter}

\title{Maximum genus embeddings of dense eulerian graphs with specified faces}

\author[label1, label3]{M. N. Ellingham\corref{cor1}}

\address[label1]{Department of Mathematics, 1326 Stevenson Center,
Vanderbilt University,
Nashville, Tennessee 37240}
\fntext[label3]{Supported by Simons Foundation awards 429625 and MPS-TSM-00002760}

\ead{mark.ellingham@vanderbilt.edu} \ead[url]{https://math.vanderbilt.edu/ellingmn/}

\author[label2]{Joanna A. Ellis-Monaghan}

\address[label2]{Korteweg-de Vries Institute for Mathematics, University of Amsterdam, Science Park 105-107, 1098 XH Amsterdam, the Netherlands}
\ead{jellismonaghan@gmail.com}
\ead[url]{https://sites.google.com/site/joellismonaghan/}

\setdatetoshow{22 September 2024}% Fill in date and uncomment

\begin{abstract}
We give a density condition for when, subject to a necessary parity condition, an eulerian graph or digraph may be cellularly embedded in an orientable surface so that it has exactly two faces, each bounded by an euler circuit, one of which may be specified in advance.  More generally, suppose that every vertex in an $n$-vertex eulerian digraph (loops and multiple arcs allowed) has at least $(4n+2)/5$ neighbors, and specify any decomposition of the arcs into disjoint directed circuits (closed trails).  We show that such a digraph has an orientable embedding in which the given circuits are facial walks and there are exactly one or two other faces. 
This embedding then has maximum genus relative to the given circuits being facial walks.  When there is only one other face, it is necessarily bounded by an euler circuit. Consequently, if the numbers of vertices and edges have the same parity, a sufficiently dense digraph $D$ with a given directed euler circuit $T$ has an orientable embedding with exactly two faces, each bounded by an euler circuit, one of which is $T$. These results for digraphs give analogous results for graphs as immediate corollaries. The main theorem encompasses several special cases in the literature, such as when the digraph is a tournament.

\end{abstract}

\begin{keyword}  
Eulerian \sep bi-eulerian \sep orientable graph embedding \sep maximum genus \sep circuit decomposition \sep digraph \sep directed embedding 
\MSC 05C10 \sep 05C45
\end{keyword}

\end{frontmatter}

%% Start line numbering here if you want
%%
% \linenumbers

\section{Introduction}

When does a graph or digraph have an orientable \emph{bi-eulerian} embedding?  This is, when can it be cellularly embedded in a orientable surface so that it has exactly two faces, each bounded by an euler circuit, such as shown in Figure \ref{BEembed}? Is it possible to specify one of the euler circuits in advance? 

\begin{figure}[h!]
     \centering
   \begin{subfigure}[l]{0.5\textwidth}
        \centering
    \includegraphics[clip, trim=0cm 8.5cm 0cm 0cm,scale=0.45]{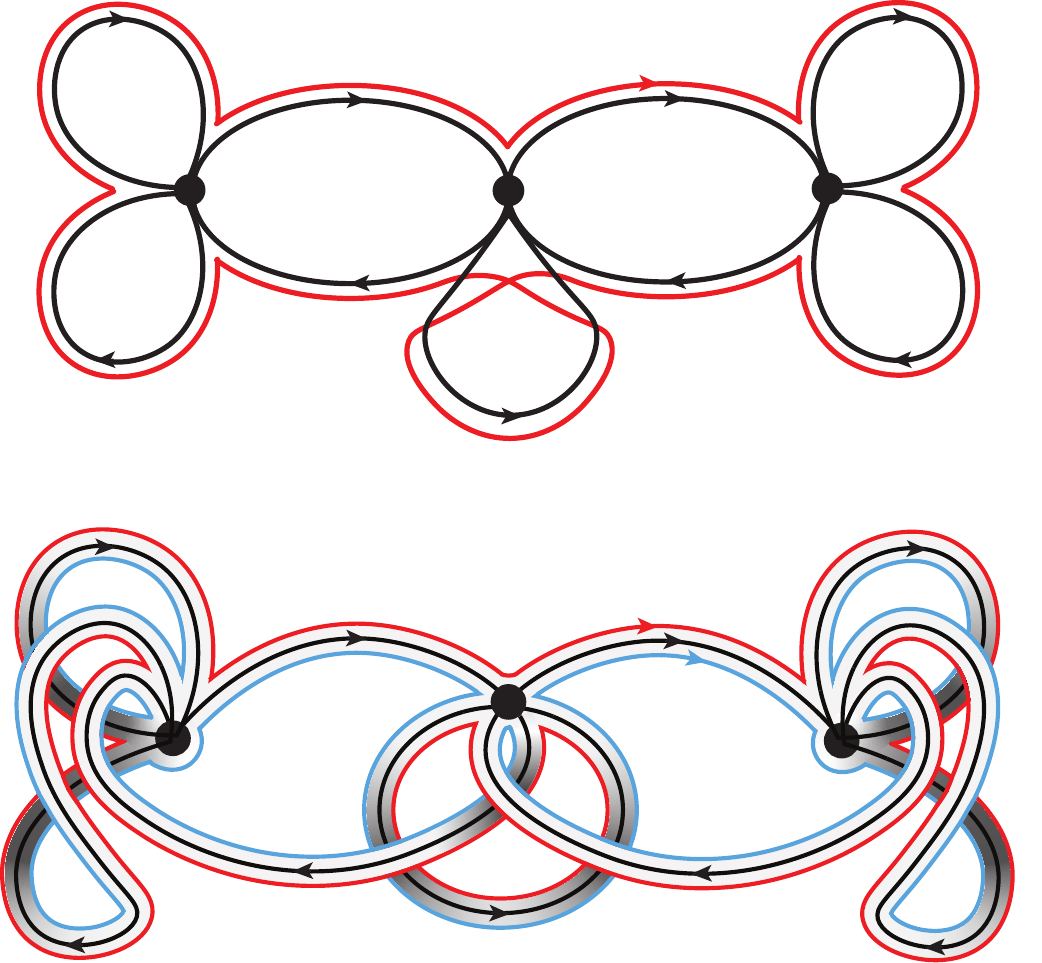}
        \caption{A digraph with a specified directed euler circuit.}
        \label{eulercirc}
     \end{subfigure}
        \hfill
        \begin{subfigure}[c]{0.7\textwidth}
        \centering
    \includegraphics[clip, trim=0cm 0cm 0cm 7.5cm,scale=0.6]{RibbonFinal.pdf}
        \caption{A directed embedding of the digraph with two faces.}
        \label{bieulemb}
     \end{subfigure}

\caption{A bi-eulerian orientable embedding of a digraph, with the specified euler circuit as one face (red), and one other euler circuit face (blue).  The embedding is represented as a ribbon graph, while sewing a  disc into each facial walk gives the surface.}
\label{BEembed}
\end{figure}

More generally, when is it possible to specify an arbitrary circuit decomposition of the edges or arcs and complete it to an embedding with just one more face, necessarily bounded by an euler circuit?  Finding such a face achieves a maximum genus orientable embedding  having the  circuits in a given decomposition as facial walks. This leads to the general question of determining the maximum genus of an embedding relative to a given circuit decomposition. Beyond topological graph theory, these questions arise in surprisingly diverse settings, including DNA self-assembly, Steiner triple systems, and latin squares.

Our main result, given in Theorem \ref{densemain}, answers these questions for sufficiently dense graphs and digraphs.  We prove that if every vertex of an $n$-vertex digraph has at least $(4n+2)/5$ neighbors, and a necessary parity condition holds, then it is indeed always possible to achieve these special embeddings of maximum orientable genus.  Our proof is constructive, so gives an algorithmic process for finding the desired embedding. Since the edges of an eulerian graph can always be directed to give an eulerian digraph, the stronger digraph results lead immediately to corollaries with analogous results for graphs. 

Such consequences include 
 Corollary \ref{undirdenseotef}, which says that if $G$ is a sufficiently dense eulerian graph with euler circuit $T$, then there is a $2$-face-colorable orientable embedding of $G$ with $T$ as the unique face of one color and with exactly one or two faces of the other color, depending on whether $|V(G)|+|E(G)|$ is even or odd, respectively.  This embedding has maximum genus among all $2$-face-colorable orientable embeddings of $G$.
When $|V(G)|+|E(G)|$ is even, then this is an orientable bi-eulerian embedding of $G$ with $T$ as a specified face, and the embedding has maximum genus among all orientable embeddings of $G$.  However, in general not every maximum genus embedding of such a $G$ is bi-eulerian.  

Our original motivation for studying bi-eulerian embeddings came from an applied problem in DNA self-assembly posed by Jonoska, Seeman and Wu \cite{JSW09}, which required a special closed walk in a graph for a DNA strand to follow.  The presence of such walks corresponds to the existence of an orientable \emph{edge-outer} embedding, namely an embedding with a special \emph{outer} face whose boundary uses every edge at least once.  The paper \cite{JSW09} proves the existence of orientable edge-outer embeddings, while \cite{EE-M19} provides a short algorithm to find them and shows that finding an optimal embedding (with shortest outer face) is NP-hard.  Neither \cite{JSW09} nor \cite{EE-M19}, however, provides any control over the number or sizes of the non-outer faces.

The startlingly simple (to state!) and intriguing questions in the first paragraph emerged from this application. Although determining the size of optimal edge-outer faces is hard in general, for eulerian graphs any optimal edge-outer face is necessarily bounded by an euler circuit. Thus, we seek to control the remaining faces in an optimal edge-outer embedding of an eulerian graph by specifying them in advance with a circuit decomposition. Of particular interest are bi-eulerian orientable embeddings, particularly when one of the circuits is specified in advance.

While our original motivation was DNA self-assembly, these and some closely related questions have also received considerable attention in various other special settings.  

In \cite{Edm65}, Edmonds proved that every eulerian graph has an bi-eulerian embedding in some surface, but noted that his main theorem was not sufficient to determine the orientability of the embedding.
A result of Kotzig \cite{Kot68} implies that in a $4$-regular graph every euler circuit, or more generally every circuit decomposition, can be extended to an embedding by adding an additional face bounded by an euler circuit; again the orientability is not determined. In \cite[Section 6]{EE-M2mod4} we generalized both of these results to characterize  when a circuit decomposition of an eulerian graph can be extended to a nonorientable embedding by adding an additional euler circuit face.

In this paper we focus on the more challenging orientable situation.  Conditions for the existence of an orientable bi-eulerian embedding where one euler circuit face is specified in advance were given by Bonnington, Conder, Morton, and McKenna \cite{BCMM02} for regular tournaments, and by the current authors \cite{EE-M2mod4}, based on the degrees of vertices modulo $4$. A series of papers, \cite{EGS20, GGS05, GMS20, GPS18} authored by Griggs and \siran, and sometimes also Erskine, Grannell, McCourt, or Psomas, discusses completing a triangular decomposition to an embedding by adding an euler circuit, for situations arising from a structures in design theory (Steiner triple systems, latin squares or symmetric $3$-configurations).

More details of these, and other, prior results are given in Section \ref{previousresults}, after reviewing necessary definitions and basic results in Section \ref{sec:term}.  We then state our main result and its various consequences and related results in Section \ref{sec:stateresults}, deferring the rather technical proof of the main result until Section \ref{MainProof}.  Section \ref{SupLemmas} contains the lemmas, some of independent interest, used for the proof of the main theorem.
We conclude in Section \ref{sec:conclusion} with some closing remarks.

An overview of some of the results in this paper has appeared in \cite{EE-Mdense}.

\section{Terminology} \label{sec:term}

We briefly recall some definitions and terminology from \cite{EE-M2mod4}.

\subsection{Representation of graphs and digraphs}

Our graphs and digraphs allow multiple edge or arcs, and loops, and we often need to specify a particular end of an edge or arc.  Therefore we define graphs and digraphs using half-edges and half-arcs, similarly to Fleischner~\cite{Fle90}.
 
A graph is a quadruple $G=(V,E^*,\incv,\mate)$ where $V$ (vertices) and $E^*$ (half-edges) are disjoint sets, $\incv: E^* \to V$ describes the incidence of each half-edge with a vertex, and $\mate: E^* \to E^*$ is a fixed-point-free involution that maps each half-edge to another half-edge.
An edge is an unordered pair $\{h, \mate(h)\}$ where $h \in E^*$, and we let $E$ denote the set of edges.  We use $E^*(v)$ to represent $\incv\iv(v)$, the set of half-edges incident with a given vertex $v$.  To specify a particular graph $G$ we write $V(G)$, $E^*(G)$, $\incv_G$, $\mate_G$, $E(G)$, and $E^*_G(v)$. 
The minimum degree of $G$ is denoted $\delta(G)$.

Similarly, a digraph is a quintuple $D=(V, A^+, A^-, \incv, \mate)$ where $V$ (vertices), $A^+$ (outgoing half-arcs) and $A^-$ (incoming half-arcs) are disjoint sets, and if $A^* = A^+ \cup A^-$ then
$\incv : A^* \to V$ describes incidences of half-arcs with vertices, and $\mate: A^* \to A^*$ is an involution that maps each element of $A^+$ to an element of $A^-$ and vice versa.
An arc of $D$ is an ordered pair $(g, \mate(g))$ where $g \in A^+$, and we let $A$ denote the set of arcs.
We use $A^+(v)$, $A^-(v)$, and $A^*(v)$ to represent the set of elements of $A^+$, $A^-$, and $A^*$, respectively, incident with a vertex $v$. To specify a particular digraph $D$ we write $V(D)$, $A^+(D)$, $A^-(D)$, $\incv_D$, $A^*(D)$, $\mate_D$, $A(D)$, $A^+_D(v)$, $A^-_D(v)$, and $A^*_D(v)$.
Two distinct vertices $u, v$ of $D$ are \emph{adjacent} if there is an arc (in either direction) with endvertices $u$ and $v$.

We warn the reader that we will use either the framework just described or standard definitions as convenient.
We follow the terminology of \cite{West} unless otherwise noted.

A graph, walk, digraph, or directed walk is \emph{nontrivial} if it has at least one edge or arc.

A \emph{walk} of \emph{length $\ell$} in a graph $G$ is a sequence $W = v_0 g_1 h_1 v_1 g_2 h_2 v_2 \dots\allowbreak v_{\ell-1} g_\ell h_\ell v_\ell$ where
$v_i \in V(G)$, $g_i, h_i \in E^*(G)$,
$\incv(g_i) = v_{i-1}$, $\incv(h_i) = v_i$ and $\mate(g_i) = h_i$ for all $i$ with $1 \le i \le \ell$.
A \emph{directed walk} in a digraph $D$ is similar to a walk, but with conditions $g_i \in A^+(D)$ and $h_i \in A^-(D)$ for all $i$ with $1 \le i \le \ell$.
Special types of walks include \emph{closed walks} ($v_0 = v_\ell$), \emph{paths} (no repeated vertices), \emph{cycles} (closed walks that use every edge of a $2$-regular subgraph exactly once), \emph{trails} (no repeated edges), and \emph{circuits} (closed trails). When it does not cause confusion, we do not distinguish between a closed (directed) walk and an equivalence class of closed (directed) walks under cyclic shifts, and we indicate such an equivalence class with parentheses, as $(v_0 g_1 h_1 v_1 \dots v_{\ell-1} g_\ell h_\ell)$.
If $W = v_0 g_1 \dots v_\ell$ and $W' = v'_0 g'_1 \dots v'_{\ell'}$ are two (directed) walks with $v_\ell = v'_0$, then the (directed) walk $v_0 g_1 \dots (v_\ell=v'_0) g'_1 \dots v'_{\ell'}$ obtained by concatenating $W$ and $W'$ is denoted by $W \mw W'$.

An \emph{euler circuit} in a graph is a circuit that contains all the edges and vertices.  A graph with an euler circuit is \emph{eulerian}, which is equivalent to being connected with every vertex of even degree.  Directed euler circuits and eulerian digraphs are defined similarly; an eulerian digraph is connected with every vertex having indegree equal to its outdegree.  Every eulerian graph has at least one \emph{circuit decomposition}, a collection of circuits using every edge exactly once.  Similarly, every eulerian digraph has at least one \emph{directed circuit decomposition}, using every arc exactly once.

\subsection{Embeddings and directed embeddings}

All graph embeddings in this paper are cellular.  We assume that the reader is familiar with cellular embeddings of graphs in surfaces and their various combinatorial representations; standard references are \cite{E-MM13, GT, MT}.
An embedding can represented by giving its collection of facial boundary walks, and we often do not distinguish between a face of an embedding and the corresponding facial walk.
An \emph{oriented} embedding is an embedding in an orientable surface with a specific global clockwise orientation.
Our embeddings will mostly be oriented and represented combinatorially by rotation systems specifying the clockwise order of half-edges or half-arcs around each vertex, and we use standard tracing procedures to determine the faces.
The \emph{Euler genus} $\gamma$ of a surface $\Sigma$ is the genus if $\Sigma$ is nonorientable and twice the genus if $\Sigma$ is orientable, and Euler's formula for a connected graph $G$ embedded in $\Sigma$ is $|V|-|E|+|F|=2-\gamma$.  

If we embed a digraph $D$ so that every face is bounded by a directed closed walk of $D$ (respecting the directions of the arcs in $D$), we call this a \emph{directed embedding} of $D$.  In an oriented directed embedding, each face is either a \emph{proface}, with facial walk directed clockwise, or an \emph{antiface}, with facial walk directed anticlockwise.  Directed embeddings have the following basic properties.

\begin{observation}[{Bonnington, Conder, Morton and McKenna \cite{BCMM02}}]\label{diremb-basic-s}\ 
\begin{enumerate}[(a), nosep]
\item\label{db-alt-s}
An embedding of a digraph is a directed embedding if and only if at each vertex the half-arcs, taken in rotational order, alternate between entering and leaving the vertex.

\item\label{db-eul-s}
Therefore, a connected digraph with a directed embedding must be eulerian, because at each vertex the numbers of entering and leaving half-arcs are equal.

\item\label{db-ori-s}
An eulerian digraph always has at least one oriented directed embedding, given by any rotation system in which the half-arcs alternate in direction around each vertex.

\item\label{db-ori-2fc}
An orientable directed embedding is always $2$-face-colorable, where (for a specific orientation of the surface) the profaces and the antifaces form the two color classes.

\end{enumerate}
\end{observation}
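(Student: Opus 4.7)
My plan is to establish the four parts in order, treating part \ref{db-alt-s} as the central structural claim from which parts \ref{db-eul-s}, \ref{db-ori-s}, and \ref{db-ori-2fc} follow quickly. The main work will be unpacking the oriented face-tracing procedure carefully in the half-arc language of Section \ref{sec:term}.

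For part \ref{db-alt-s}, I would work directly with the face-tracing rule for an oriented embedding: starting from a half-arc, cross to its mate at the other endvertex, then take the next half-arc in the clockwise rotation as the outgoing half-arc of the next step. For the resulting facial walk to be directed, each time the trace arrives at a vertex $v$ via an incoming half-arc (an element of $A^-(v)$), the next half-arc in rotation must be outgoing (in $A^+(v)$), so that the walk uses an $A^+$/$A^-$ pair as required in the definition of a directed walk. Since each half-arc at $v$ eventually appears as the ``arrival'' half-arc in one of the two facial walks on either side of its incident arc, the alternation condition must hold around every vertex. The converse is immediate: alternation makes every trace step pair an incoming half-arc with an outgoing one, so all facial walks are directed.

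Parts \ref{db-eul-s} and \ref{db-ori-s} follow at once. Alternation forces $|A^+(v)|=|A^-(v)|$ at every $v$, hence $\indeg(v)=\outdeg(v)$, and a connected digraph with equal in- and out-degrees at every vertex is eulerian; conversely, given an eulerian $D$, at each $v$ one may interleave any orderings of $A^+(v)$ and $A^-(v)$ to obtain an alternating rotation, and the resulting rotation system yields an oriented directed embedding by \ref{db-alt-s}. For part \ref{db-ori-2fc}, I would use that an arc appears on the boundaries of exactly two faces (possibly equal, but the relevant sides are distinct) and is traversed once in each direction as those sides are traced; relative to the fixed clockwise orientation of the surface, the face on one side then reads the arc as part of a clockwise circuit (a proface) and the face on the other side reads it as part of an anticlockwise circuit (an antiface). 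Thus the proface/antiface labeling gives a proper $2$-face-coloring.

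The main obstacle is purely notational: carefully translating the standard tracing procedure into the $(V,A^+,A^-,\incv,\mate)$ framework so that the alternation condition in \ref{db-alt-s} is seen to be both necessary and sufficient at every half-arc, not merely at one chosen half-arc per vertex. Once that equivalence is nailed down, parts \ref{db-eul-s}--\ref{db-ori-2fc} are essentially one-line consequences.
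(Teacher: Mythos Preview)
Your proposal is correct. The paper itself offers no proof of this observation; it is stated without argument and attributed to \cite{BCMM02}, so there is nothing to compare against beyond noting that your write-up supplies the details the paper omits.

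One small point worth tightening in part \ref{db-alt-s}: your description of face tracing implicitly assumes each face is being traced in the direction of its arcs, so that every arrival half-arc lies in $A^-(v)$. But in an oriented directed embedding the standard clockwise trace follows the arc directions only for the profaces; for the antifaces it runs against them, so the arrival half-arc lies in $A^+(v)$ and the departure in $A^-(v)$. Either way the pair $(h,\pi_v(h))$ at a corner consists of one element of $A^+(v)$ and one of $A^-(v)$, which is exactly the alternation you want, and once alternation holds the parity propagates consistently along each facial walk, so each face is either entirely a proface or entirely an antiface. This also makes your argument for \ref{db-ori-2fc} cleaner: the two sides of any arc belong to faces traced in opposite senses relative to the surface orientation, hence one is a proface and the other an antiface, and in particular they are distinct (so your hedge ``possibly equal'' is unnecessary here). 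With that refinement your outline is complete.
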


Part (d) above can be strengthened, as follows.

\begin{lemma}[{\cite[Lemma 2.2]{EE-M2mod4}}]
\label{diremb-ori-2fc}
A directed embedding is orientable if and only if it is $2$-face-colorable.
\end{lemma}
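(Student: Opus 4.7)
The forward implication is exactly Observation~\ref{diremb-basic-s}(d), so the plan is to prove the converse: that a $2$-face-colorable directed embedding is orientable. The strategy will be to use the coloring together with the arc directions to construct an explicit consistent global orientation of the surface.

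First I would fix a $2$-face-coloring with color classes $R$ and $B$; by properness of the coloring, every arc borders exactly one red face and one blue face. Because the embedding is directed, each facial walk is a directed closed walk and so inherits a canonical traversal direction from the arcs it follows. The key step is to assign a local orientation to each face as follows: on each red face I choose the local orientation for which the clockwise boundary traversal matches the arc direction, and on each blue face I choose the opposite local orientation, so that clockwise boundary traversal runs against the arc direction. The plan is then to verify that these local orientations glue into a consistent global orientation of the surface.

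The verification uses the standard criterion that a cellular embedding is orientable if and only if the face boundaries can be oriented so that every edge is traversed in opposite directions by its two incident facial walks. Fix any arc $a$, with incident red face $F_R$ and blue face $F_B$. Both facial walks traverse $a$ in $a$'s own direction (because each face boundary is a directed walk), but by construction this common traversal is clockwise around $F_R$ and counter-clockwise around $F_B$. Hence the two induced boundary directions on $a$ are opposite, which is exactly the local consistency required, and since this holds at every arc the chosen local orientations assemble into a global orientation.

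The main obstacle is purely formal, namely lining up the paper's rotation-system formalism with the topological criterion used above. A fully combinatorial alternative is to work directly with an embedding scheme (rotation plus edge signatures) and show that the presence of a proper $2$-face-coloring allows the edge signatures to be converted to all $+1$ by a sequence of vertex signature switches, which is exactly the condition that the embedding be oriented; the argument is essentially the same, with ``red face'' and ``blue face'' playing the role of the two orbits on which opposite switches are applied.
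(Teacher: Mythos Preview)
The paper does not actually prove this lemma; it is quoted from \cite{EE-M2mod4} and used as a black box, so there is no in-paper argument to compare against. Your proof is correct and is essentially the standard argument: the $2$-coloring plus the arc directions give you a canonical choice of local orientation on every face, and the check that adjacent faces induce opposite directions on their common arc is exactly the orientability criterion. The only thing worth tightening is to state explicitly that properness of the $2$-coloring guarantees every arc lies on two \emph{distinct} faces, so the ``opposite-direction on each edge'' criterion applies cleanly.
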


An embedding (directed embedding) of a graph (digraph) is \emph{bi-eulerian} if it has two faces, each bounded by an euler circuit (directed euler circuit).  Bi-eulerian embeddings of graphs may be orientable or nonorientable, but bi-eulerian directed embeddings of digraphs are always orientable, by Lemma \ref{diremb-ori-2fc}. 

The term \emph{bi-eulerian} or \emph{bieulerian} has occasionally been used in the literature, with a variety of meanings.  For example, Xuong \cite[p.~218]{Xuo79b} used it for embeddings with a single face covering every edge twice, Fleischner \cite[p.~VI.17]{Fle90} used it for special eulerian digraphs, and Chen and Fang \cite{ChFa22} used it for embeddings of bipartite eulerian graphs.  We trust that our reuse of this term will cause no confusion.

\subsection{Relative embeddings and upper relative embeddings}

Suppose $G$ is a connected graph and $\cW$ is a collection of closed walks in $G$.  If $\cW'$ is another collection of closed walks in $G$ such that $\cW$ and $\cW'$ together form the facial walks of an embedding $\Phi$, then we say $\Phi$ is an \emph{embedding of $G$ relative to $\cW$}.  We say the faces bounded by elements of $\cW$ and of $\cW'$ are \emph{specified} and \emph{new} faces, respectively.
If $\Phi$ is orientable, $\cW$ is a circuit decomposition $\cC$ of $G$, and there are only one or two new faces, then we say that $\Phi$ is an \emph{upper embedding of $G$ relative to $\cC$}.  (We require $\cW$ to be a circuit decomposition to avoid complications that arise in more general situations.)  In particular, if we complete $\cC$ to an orientable embedding by adding an euler circuit, then we have an upper relative embedding.
If an upper embedding of $G$ relative to $\cC$ exists, it has maximum genus over all orientable embeddings of $G$ relative to $\cC$.

We can also apply these concepts to collections of directed walks, directed circuit decompositions, and directed embeddings of digraphs.  The following observation relies on Lemma \ref{diremb-ori-2fc} for (a) and on evenness of the Euler genus of an orientable embedding for (b).  Part (b) tells us whether an upper relative embedding will have just one new face or two new faces.

\begin{observation}\label{diremb-relcd}
Suppose $\cC$ is a directed circuit decomposition of an eulerian digraph $D$ and $\Phi$ is a directed embedding of $D$ relative to $\cC$.
\begin{enumerate}[(a), nosep]
\item
Then $\Phi$ is orientable, and the surface can be oriented so that $\cC$ is the collection of profaces.
\item
The number of new faces of $\Phi$ has the same parity as $|V(D)|+|A(D)|+|\cC|$.
\end{enumerate}
\end{observation}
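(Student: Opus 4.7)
My plan is to handle the two parts separately: I would derive (a) from Lemma \ref{diremb-ori-2fc} together with a counting argument over arcs, and then use (a) and Euler's formula to obtain (b).

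For part (a), I would show that $\Phi$ is 2-face-colorable by assigning one color to the faces in $\cC$ and another to the new faces, and then invoke Lemma \ref{diremb-ori-2fc} to conclude orientability. To check that this really is a proper 2-face-coloring, I would use the fact that in any cellular embedding each arc of $D$ contributes exactly two arc-appearances to the multiset of facial walks, one from each side. Because $\cC$ is a directed circuit decomposition, each arc contributes exactly one appearance to the walks of $\cC$, so it must contribute exactly one appearance to the new facial walks as well. In particular, no single facial walk uses any arc twice, and for every arc the two faces containing it are distinct, with exactly one in $\cC$ and one among the new faces. Once orientability is established, Observation \ref{diremb-basic-s}\ref{db-ori-2fc} says that for any chosen orientation the profaces and antifaces form the two face-color classes; reversing the orientation if necessary arranges that $\cC$ is the collection of profaces.

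For part (b), since $\Phi$ is orientable by (a), the Euler genus $\gamma$ of the underlying surface is even, so Euler's formula $|V(D)| - |A(D)| + |F(\Phi)| = 2 - \gamma$ yields $|V(D)| - |A(D)| + |F(\Phi)| \equiv 0 \pmod 2$. Writing $|F(\Phi)| = |\cC| + k$, where $k$ is the number of new faces, and rearranging modulo $2$ gives $k \equiv |V(D)| + |A(D)| + |\cC| \pmod 2$.

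The only subtle point is the observation used in part (a) that no single facial walk can traverse the same arc twice; if one did, then both of that arc's two facial appearances would already lie inside a single new face, leaving no appearance available for $\cC$ (or, symmetrically, if the doubly-used face were in $\cC$, this would violate the definition of a circuit decomposition). The arc-by-arc count above rules this out immediately, so I expect no further obstacle.
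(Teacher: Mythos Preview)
Your proposal is correct and follows exactly the approach the paper indicates: the paper states (without a formal proof) that the observation ``relies on Lemma~\ref{diremb-ori-2fc} for (a) and on evenness of the Euler genus of an orientable embedding for (b),'' and you have filled in precisely those details. The arc-counting argument you give to verify that $(\cC,\text{new faces})$ is a proper $2$-face-coloring is the natural one; the only small point left implicit is that, since $D$ is connected, the dual graph is connected and hence the proper $2$-face-coloring is unique up to swapping colors, which is what lets you identify $\{\text{profaces},\text{antifaces}\}$ with $\{\cC,\text{new faces}\}$.
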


We will use the following result on the existence of oriented directed embeddings relative to a directed circuit decomposition.

\begin{lemma}\label{dcdembedding}
Suppose $\cC$ is a directed circuit decomposition of an eulerian digraph $D$.  Then there exists an oriented directed embedding of $D$ in which the profaces are precisely the elements of $\cC$.
\end{lemma}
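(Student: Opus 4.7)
The plan is to build the required embedding by specifying a suitable rotation at each vertex. For each vertex $v$ and each time a circuit of $\cC$ passes through $v$, the circuit uses one incoming half-arc $h\in A^-(v)$ and one outgoing half-arc $g\in A^+(v)$; call such a pair $(h,g)$ a \emph{transition} of $\cC$ at $v$. Since $\cC$ is a directed circuit decomposition, the set of transitions at $v$ is a perfect matching between $A^-(v)$ and $A^+(v)$ (and in particular $|A^-(v)|=|A^+(v)|$, as $D$ is eulerian).

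At each vertex $v$, I would list the transitions at $v$ in an arbitrary order as $(h_1,g_1),(h_2,g_2),\dots,(h_k,g_k)$ and declare the clockwise rotation of the half-arcs at $v$ to be $h_1,g_1,h_2,g_2,\dots,h_k,g_k$. This rotation system determines an oriented embedding $\Phi$ of $D$. By construction the half-arcs around each vertex alternate between $A^-$ and $A^+$, so $\Phi$ is a directed embedding by Observation \ref{diremb-basic-s}\ref{db-alt-s}.

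It remains to check that the profaces of $\Phi$ are exactly the elements of $\cC$. With the standard tracing convention for an oriented embedding, the clockwise face containing the directed arc that enters $v$ through the half-arc $h_i$ continues out through the next half-arc clockwise in the rotation at $v$, which by the construction is $g_i$; that is, tracing a proface performs exactly the transition $(h_i,g_i)$ at $v$. Since these transitions are precisely those prescribed by $\cC$, the proface reached by tracing from any arc of a circuit $C\in\cC$ coincides as a directed closed walk with $C$. Because each arc of $D$ appears in exactly one circuit of $\cC$ and on exactly one proface of $\Phi$, the proface set of $\Phi$ is precisely $\cC$.

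This argument is essentially a direct construction, so no single step is a real obstacle; the only point requiring care is to align the chosen face-tracing convention with the clockwise orientation so that the circuits $\cC$ appear as profaces (and not as antifaces). Reversing every rotation, or equivalently reversing the global orientation, swaps the two roles, so either way we get the claimed embedding.
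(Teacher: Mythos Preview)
Your proof is correct and is essentially the same construction as the paper's: both define the transition (matching) between incoming and outgoing half-arcs at each vertex from $\cC$, build a rotation in which these paired half-arcs are adjacent and alternate in direction, and then trace profaces to see they reproduce $\cC$. The only cosmetic difference is the order within each pair in the rotation (the paper writes $(g_0 h_0 g_1 h_1 \dots)$ while you write $(h_1 g_1 h_2 g_2 \dots)$), reflecting the opposite face-tracing convention, which you already note can be absorbed by reversing the orientation.
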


\begin{proof}
Since $\cC$ is a directed circuit decomposition of $D$, every half-arc occurs exactly once in $\cC$.  For each $h \in A^-_D$, let $\fw(h) \in A^+_D$ be the half-arc that follows $h$ in some element of $\cC$.  Then $\fw(A^-(v)) = A^+(v)$ for all $v \in V(D)$.

We construct a rotation at each vertex $v$.  Suppose $v$ has $\indeg(v) = \outdeg(v) = d$, and $A^-(v) = \{h_0, h_1, \ldots, h_{d-1}\}$.  Let $g_i = \fw(h_i)$ for $0 \le i \le d-1$, and define the clockwise rotation at $v$ to be $(g_0 h_0 g_1 h_1 \ldots g_{d-1} h_{d-1})$.  Since the half-arcs alternate in direction at each $v$, these rotations specify an orientable directed embedding of $D$.  Each proface that enters $v$ on $h_i$ leaves on $g_i=\fw(h_i)$, so the profaces join the arcs together in the same way that the circuits in $\cC$ do.  Thus, the profaces are exactly the elements of $\cC$.
\end{proof}

This lemma was used implicitly by Bonnington et al.~\cite[p.~13]{BCMM02} in the case where $\cC$ consists of a directed euler circuit.

\section{Previous results}\label{previousresults}

Graph embeddings with euler circuit faces have been a subject of interest for nearly sixty years.  In this section we discuss relevant prior results.  We focus on orientable embeddings relative to a circuit decomposition, but briefly discuss more general results first.

Edmonds \cite[p.~123]{Edm65} showed that every eulerian graph has a bi-eulerian embedding.  A result of Kotzig \cite[Theorem 3]{Kot68} can be interpreted as saying that every circuit decomposition of a $4$-regular graph can be extended to an embedding by adding an euler circuit.  In both Edmonds' and Kotzig's results nothing is specified about the orientability of the resulting embedding.  We showed \cite[Theorem 6.5]{EE-M2mod4} that a circuit decomposition of an arbitrary eulerian graph can be extended to an embedding by adding an euler circuit, and the embedding can be guaranteed to be nonorientable unless every block of the graph is a cycle and the circuit decomposition is the collection of cycles.  
This extends some special cases in the literature, such as the nonorientable parts of Theorems \ref{steiner} and \ref{latin} below.
In particular, every eulerian graph that is not a cycle has a nonorientable bi-eulerian embedding where one of the euler circuit faces can be specified in advance.

We turn now to orientable embeddings.  If we have a circuit decomposition $\cC$ of an eulerian graph, then we can find an upper embedding relative to $\cC$ if we can complete $\cC$ to an orientable embedding using just one face (which is necessarily an euler circuit) or two faces.  Similarly, if we have a directed circuit decomposition $\cC$ of an eulerian digraph, then we can find an upper directed embedding relative to $\cC$ if we can complete $\cC$ to a directed embedding using just one face (which must be a directed euler circuit) or two faces.  By Observation \ref{diremb-relcd} the embedding is necessarily orientable, and we can assume that $\cC$ forms the profaces and the added face or faces form the antifaces.

General formulas for relative oriented maximum genus were given by Bonnington \cite{Bon94} and Archdeacon, Bonnington, and \siran. \cite{ABS98}, which generalize maximum orientable genus formulas due to Xuong \cite{Xuo79b} and \nebesky. \cite{Neb81}, respectively.  For a relative oriented embedding of a graph, each specified face has a specified direction that should be clockwise in the surface.  If we have a directed circuit decomposition $\cC$ of an eulerian digraph $D$, we can consider this as a collection of oriented circuits in the underlying graph $G$, and the formulas from \cite{ABS98, Bon94} can be applied to find the maximum genus of a (necessarily orientable) directed embedding relative to $\cC$.  However, it seems to be difficult to use these formulas to derive easily applicable conditions for the existence of a relative upper embedding (although we do know of one situation where we can do this, as we discuss at the end of this section).

The first result that we are aware of giving a specific condition for finding a relative upper embedding of some type of circuit decomposition is the following.

\begin{theorem}[Bonnington, Conder, Morton, and McKenna, {\cite{BCMM02}}]
\label{tournament}
Let $T$ be a directed euler circuit in a regular (equivalently, eulerian) tournament $D$. Then there is an orientable directed embedding of $D$ with $T$ as the only proface and with at most two antifaces, i.e., an upper directed embedding of $D$ relative to $\{T\}$.
\end{theorem}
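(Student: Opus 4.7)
The plan is to start from an oriented directed embedding with $T$ as the unique proface and then perform local modifications at vertices to bring the number of antifaces down to the minimum compatible with the parity constraint.

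First, apply Lemma~\ref{dcdembedding} to the one-element decomposition $\cC = \{T\}$ to obtain an oriented directed embedding $\Phi_0$ of $D$ whose only proface is $T$. At each vertex $v$ (with $\indeg(v) = \outdeg(v) = (n-1)/2$), the rotation produced by that construction splits into ``blocks'' $B_i = g_i h_i$, where $g_i = \fw(h_i)$ is the out-half-arc that $T$ uses immediately after the in-half-arc $h_i$. Let $k$ denote the number of antifaces of the current embedding; by Observation~\ref{diremb-relcd}(b) the parity of $k$ matches that of $|V(D)|+|A(D)|+1$ and is preserved by any modification that keeps $\{T\}$ as the proface set, so it suffices to show that $k$ can be driven to $1$ or $2$.

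The key local move is a \emph{block transposition} at a vertex $v$: any cyclic rearrangement of the blocks $B_0, \ldots, B_{d-1}$ preserves the intra-block pairing $h_i \mapsto g_i$ that defines $T$'s local transitions, hence keeps $T$ as a face, while rewiring the antiface corners $(h_i, g_{i+1 \bmod d})$ at $v$. Swapping two non-adjacent blocks alters four antiface corners at $v$, and so changes the antiface count by an even amount. The central claim is that whenever $k \ge 3$, some vertex admits a block transposition that decreases $k$ by $2$; iterating yields an embedding with $k \le 2$.

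To establish the central claim, I would fix three distinct antifaces and examine how they meet at various vertices: since $T$ visits each $v$ exactly $(n-1)/2$ times, $v$ has $(n-1)/2$ antiface corners, so when $k \ge 3$, a pigeonhole-style argument together with the strong connectivity of the tournament should produce a vertex where three distinct antifaces' corners are in a configuration reducible by a single swap. The main obstacle is precisely this last step: because each antiface is a \emph{global} closed walk, a local rearrangement at $v$ can merge antifaces there but simultaneously split another antiface along its traversal elsewhere, so one must verify the net effect is $-2$. The density and regularity of the regular tournament---in particular that every pair of vertices is joined by an arc, giving $(n-1)/2$ many ``meeting opportunities'' at each vertex---are what supply the combinatorial flexibility needed to always locate a swap whose merging effect is not cancelled elsewhere.
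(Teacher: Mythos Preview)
Your setup is right—start from Lemma~\ref{dcdembedding} with $\cC=\{T\}$, and use block transpositions at a vertex (this is exactly the operation in the Three Face Lemma, Lemma~\ref{3face})—but the central claim is where the argument collapses. The pigeonhole reasoning you sketch does not produce a vertex incident with three distinct antifaces. Concretely, once you have applied Lemma~\ref{3face}(a) wherever possible, you reach a \emph{locally irreducible} embedding: every vertex lies in at most two antifaces. With three antifaces $A,B,C$ the vertices are partitioned into types $AB, AC, BC, \only A, \only B, \only C$, and no single vertex sees all three. At such a vertex, a block swap touches corners from at most two antifaces, so by Lemma~\ref{3face}(b) the number of antifaces is unchanged (or, if only one antiface is present, possibly increased). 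Your ``density and regularity give enough flexibility'' is not an argument; it is precisely the hard part.

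What the paper (following \cite{BCMM02}) actually does in this locally irreducible case is a two-step move across \emph{two different vertices}, mediated by interlacing. One shows there exist $x\in AB$ and $y\in AC$ that are \emph{interlaced} on $A$ (i.e.\ $A=\dots x\dots y\dots x\dots y\dots$); a part-(b) move at $x$ then pushes a segment of $A$ into $B$ in such a way that $y$ now lies on the modified $A$, on $B$, and on $C$, and a part-(a) move at $y$ merges all three (this is the Interlaced Faces Lemma, Lemma~\ref{interlacing}). Producing the interlaced pair is itself nontrivial and is where completeness of the tournament is really used: one needs results like the Three Neighbor Lemma (\ref{3adj}) or the Diamond Lemma (\ref{diamond}), which force an $AB$-vertex and an $AC$-vertex to alternate along $A$. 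Your proposal contains nothing corresponding to this step, and the obstacle you yourself flag—that a local merge can be cancelled by a split elsewhere—is exactly what the interlacing mechanism is designed to avoid.
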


There is a series of papers \cite{EGS20, GGS05, GMS20, GPS18} involving Griggs and \siran. as authors, and also Erskine, Grannell, McCourt, and Psomas, which discuss upper embeddings relative to a triangular decomposition of a graph or digraph, and more specifically completing such a decomposition to an embedding by adding an euler circuit.  They are interested in triangular decompositions that arise from structures in design theory such as Steiner triple systems, symmetric $3$-configurations, and latin squares.  Their first results were for undirected situations.

\begin{theorem}[Granell, Griggs and \siran. {\cite{GGS05}}]
\label{steiner}
Let $\cC$ be a Steiner triple system of order $n \ge 7$, i.e., a decomposition of the complete graph $G=K_n$ into triangles.
Then there are both orientable and nonorientable embeddings of $G$ with the elements of $\cC$ as faces and exactly one additional euler circuit face.
\end{theorem}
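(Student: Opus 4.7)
The plan is to build the desired embedding by specifying a rotation system (plus a signature, in the nonorientable case) on $K_n$ consistent with having every triangle of $\cC$ as a face, and then manipulating that rotation system to collapse all non-triangle faces into one. First, I would count: the number of specified faces is $n(n-1)/6$, so if there is exactly one new face, Euler's formula forces the Euler genus to be $1-n+n(n-1)/3$. A direct check for $n\equiv 1,3\pmod 6$ with $n\ge 7$ shows this is a positive even integer, so both an orientable surface and a nonorientable surface of the appropriate genus are candidates, and there is no parity obstruction to either case. Moreover, since each edge of $K_n$ lies in exactly one triangle of $\cC$, any additional face that closes the embedding must traverse every edge exactly once, so the single new face is automatically an euler circuit.

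Next, observe that at each vertex $v$ the $(n-1)/2$ triangles of $\cC$ through $v$ partition the incident half-edges into $(n-1)/2$ pairs. In any embedding having $\cC$ among its faces, the rotation at $v$ is a cyclic arrangement of these pairs with the two half-edges of each pair consecutive. An undirected analogue of Lemma \ref{dcdembedding} produces an initial orientable embedding in which every triangle of $\cC$ is a face but typically with several new faces. I would then seek local modifications that preserve the triangle faces while reducing the number $k$ of new faces: most naturally, reversing a contiguous block of pairs in the rotation at a single vertex (orientable case), and additionally toggling an edge signature (nonorientable case). Each such move changes $k$ by an even amount, so the task is to show that whenever $k\ge 3$ there is a move decreasing $k$ by exactly two, terminating at $k=1$.

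The main obstacle is this reduction step. One needs a combinatorial lemma asserting that when two distinct new faces are present, some vertex $v$ has a rotation in which reversing an appropriate block of triangle-pairs merges those two faces into one. This in turn depends on how the two new faces meet at shared vertices, and one must rule out configurations where every locally available block-reversal either keeps the two faces separate or splits a triangle face. I would expect to introduce an auxiliary structure — for instance a multigraph whose vertices are the triangles of $\cC$ and whose edges record new-face incidences, or a trace of the new faces through the rotations — and then use a connectivity or parity argument that exploits the richness of an STS on $n\ge 7$ points. Handling the nonorientable case requires, in addition, arranging at least one signature change that does not destroy any triangle face and guaranteeing that the resulting embedding is genuinely nonorientable rather than accidentally orientable, which likely forces a small separate construction for a single ``crosscap'' modification combined with the same merging procedure.
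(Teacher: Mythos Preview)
This theorem is not proved in the paper; it is quoted from \cite{GGS05} as a previous result. What the paper does do is observe that the \emph{orientable} part follows as a special case of its main Theorem~\ref{densemain} (equivalently Corollary~\ref{undirdensemain}): for $G=K_n$ one has $\delta(\sg{G})=n-1\ge(4n+2)/5$ precisely when $n\ge 7$, and a parity check shows $|V|+|E|+|\cC|$ is odd, so the general density machinery (Three Face Lemma, Interlaced Faces Lemma, Blow~Up Lemma, touch-graph case analysis) yields exactly one additional face. The nonorientable part is not derived in this paper at all; the authors cite their own \cite[Theorem~6.5]{EE-M2mod4} for that.

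Your proposal is a reasonable outline of a direct argument, and its opening paragraph (Euler-formula count, parity, why the extra face is necessarily an euler circuit) is correct. But it is not a proof: you explicitly identify the reduction step as ``the main obstacle'' and then only say what you ``would expect'' to do. That is the entire content of the theorem. In the paper's general proof, the analogous step occupies all of Sections~\ref{SupLemmas} and~\ref{MainProof}: one needs concrete conditions (interlacing of vertices on an antiface, the Three Neighbor and Diamond Lemmas, the Blow~Up Lemma to enlarge small faces) that guarantee a merging move exists, and a structural case analysis (via the touch graph) to show those conditions are always met when three or more antifaces remain. Your ``reverse a contiguous block of pairs'' move is essentially the Three Face Lemma, but you have not shown that three new faces must share a vertex, nor handled the case where they do not --- which is exactly where the paper needs interlacing and density. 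For the nonorientable case your sketch is even thinner: toggling one edge signature while keeping all triangle faces and ensuring genuine nonorientability is delicate, and ``a small separate construction'' does not discharge it.
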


\begin{theorem}[Griggs, Psomas, and \siran. {\cite{GPS18}}]
\label{latin}
Let $\cC$ be a latin square of order $p$, i.e., a decomposition of the balanced complete tripartite graph $G=K_{p,p,p}$ into triangles.
If $p \ge 2$ then there is a nonorientable embedding of $G$ with the elements of $\cC$ as faces and with exactly one additional euler circuit face.
If $p$ is odd then there is an orientable embedding of $G$ with the elements of $\cC$ as faces and with exactly one additional euler circuit face.
\end{theorem}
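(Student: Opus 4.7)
The nonorientable part of the statement is a direct specialization of \cite[Theorem 6.5]{EE-M2mod4}, cited in the previous section: since $K_{p,p,p}$ is $3$-connected for $p\ge 2$, its only block is the whole graph, which is not a cycle, so that theorem extends any circuit decomposition---in particular $\cC$---to a nonorientable embedding with one additional euler circuit face. I would focus the rest of the plan on the orientable half.

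For the orientable case, my first step would be to pass to a directed formulation. Orient each triangle $r_i c_j s_{L(i,j)}$ cyclically as a directed $3$-cycle; this converts $K_{p,p,p}$ into an eulerian digraph $D$ with in- and out-degree $p$ at every vertex and promotes $\cC$ to a directed triangle decomposition $\widetilde\cC$. Lemma~\ref{dcdembedding} then guarantees, for any rotation system that alternates half-arc directions at each vertex and places each directed triangle's two half-arcs rotationally consecutively, an oriented directed embedding of $D$ whose profaces are exactly $\widetilde\cC$. Observation~\ref{diremb-relcd}(b) gives that the number of antifaces has parity $|V(D)|+|A(D)|+|\widetilde\cC|\equiv 3p+3p^2+p^2\equiv p\pmod 2$, which equals $1$ when $p$ is odd, so having exactly one antiface (necessarily a directed euler circuit) is parity-consistent.

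The rotation at each vertex $v$ is determined, up to cyclic shift, by a cyclic ordering of the $p$ triangles through $v$; once the $3p$ such orderings are chosen, the antifacial structure is determined. I would encode that structure as the cycle decomposition of a permutation $\sigma$ on the $3p^2$ arcs of $D$, where $\sigma(a)$ is the arc following $a$ along its antifacial walk, and then choose the orderings so that $\sigma$ consists of a single cycle. My plan would be an inductive switching argument: starting from any rotation system with $k\ge 3$ antifaces, find a vertex $v$ at which two different antifaces both appear as transitions, and exchange two whole ``triangle blocks'' in the cyclic order at $v$ in a way that preserves all profaces but merges the two selected antifaces, dropping $k$ by $2$.

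The main obstacle is this merging step, since the only changes at $v$ that preserve every triangle as a proface are transpositions of the $p$ triangle blocks in the cyclic order at $v$; one has to verify both that a suitable $v$ and pair of blocks exist when $k\ge 3$ and that the swap indeed merges rather than splits antifaces. Existence of a good $v$ I would attempt via pigeonhole on the $3p^2$ vertex-transitions (each vertex hosts $p$ of them), while for the merging requirement I would analyse the four $\sigma$-values that change when two blocks are swapped. If the general switching argument proves stubborn, a fallback would be an explicit voltage or current graph construction for the cyclic latin square on $\mZ_p$ (valid because $p$ is odd), followed by a ``trade'' argument to transfer the embedding from the cyclic latin square to an arbitrary one.
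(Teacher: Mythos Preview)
This theorem is not proved in the present paper at all: it appears in Section~\ref{previousresults} as a quoted prior result from \cite{GPS18}, and the paper explicitly remarks (end of Section~\ref{previousresults}) that its own main theorem is \emph{not} strong enough to recover the orientable half, because $K_{p,p,p}$ has $\delta(\sg{G})=2p=\tfrac{2}{3}n$, which fails the hypothesis $\delta(\sg{G})\ge(4n+2)/5$ of Theorem~\ref{densemain}. All the paper says about the original argument is that the proofs in \cite{GGS05,GPS18,GMS20,EGS20} ``use arguments that are specific to triangle decompositions.'' So there is no proof here for your proposal to be compared against.

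Your nonorientable paragraph is fine and matches the paper's own remark that the nonorientable parts of Theorems~\ref{steiner} and~\ref{latin} are special cases of \cite[Theorem~6.5]{EE-M2mod4}.

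For the orientable half, your setup (orienting the triangles, invoking Lemma~\ref{dcdembedding}, and checking parity via Observation~\ref{diremb-relcd}(b)) is correct, but the inductive reduction contains a genuine gap. You write that swapping two triangle blocks at a vertex will ``merge the two selected antifaces, dropping $k$ by $2$''; merging two faces drops the count by one, so as phrased this contradicts the parity you just established. In fact a block swap at $v$ changes several antiface transitions simultaneously and alters the number of antifaces by $-2$, $0$, or $+2$ (compare the Three Face Lemma~\ref{3face}, which is exactly this kind of move); to make the induction go through you must show that whenever there are at least three antifaces some vertex admits a swap achieving $-2$. That is precisely the obstacle the present paper's machinery (Lemmas~\ref{3adj}--\ref{blowup}) is built to overcome under a density hypothesis that $K_{p,p,p}$ does not satisfy, and you have not supplied an alternative argument. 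Your fallback via a current-graph construction for the cyclic square followed by trades is closer in spirit to what \cite{GPS18} actually does, but it is also only a sketch here.
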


Griggs, McCourt and \siran. strengthened Theorems \ref{steiner} and \ref{latin} by adding specified directions to the triangle decompositions.  This effectively changes the setting from undirected graphs to digraphs, and their results can be stated as follows.

\begin{theorem}[Griggs, McCourt, and \siran. {\cite[Theorem 1.1]{GMS20}}]
\label{dirsteiner}
Let $\cC$ be an oriented Steiner triple system, i.e., a decomposition of a regular tournament $D$ into directed triangles.  Then there is an orientable directed embedding of $D$ with the elements of $\cC$ as the profaces and with exactly one directed euler circuit antiface.
\end{theorem}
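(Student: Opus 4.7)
The plan is to start from some oriented directed embedding of $D$ with $\cC$ as profaces (which exists by Lemma \ref{dcdembedding}), and then to modify it by local changes of rotation at individual vertices, preserving $\cC$ as the set of profaces, until the number of antifaces is reduced to one.

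By Lemma \ref{dcdembedding}, there is an oriented directed embedding $\Phi_0$ of $D$ whose profaces are precisely the elements of $\cC$. Observation \ref{diremb-relcd}(b) says the number of antifaces of $\Phi_0$ has the same parity as $|V(D)|+|A(D)|+|\cC|=n+\binom{n}{2}+\binom{n}{2}/3$, and a direct computation using that the orders admitting such a $\cC$ satisfy $n\equiv1$ or $3\pmod 6$ shows this quantity is always odd. So the antiface count is odd; the task is to reduce it to $1$.

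The rotation at each vertex $v$ is constrained: requiring $\cC$ to be the set of profaces forces each $T\in\cC_v:=\{T\in\cC:v\in T\}$ to place its incoming half-arc at $v$ immediately clockwise-followed by its outgoing half-arc at $v$, so the rotation at $v$ is equivalent to a cyclic ordering of the $(n-1)/2$ triangles in $\cC_v$, and each antiface corner at $v$ corresponds to a consecutive pair in this cyclic ordering. The central technical step is a local modification lemma: if the current embedding has at least three antifaces, then there exist a vertex $v$ and a transposition of two triangles in the cyclic order at $v$ whose execution yields a new oriented directed embedding still having $\cC$ as profaces, but with exactly two fewer antifaces. Iterating this lemma drives the antiface count from an odd number down to $1$.

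To establish the local modification lemma, one views the antifaces as orbits of a permutation $\alpha$ on the arcs of $D$ encoding the in--out pairings at each vertex produced by the rotation. A transposition of two triangles at $v$ multiplies $\alpha$ by a $3$-cycle or $4$-cycle acting on some of the incoming arcs at $v$; when the acted-upon arcs lie in distinct $\alpha$-orbits, the multiplication merges those orbits and decreases the antiface count by exactly two. Since $D$ is connected, any two antifaces must share a vertex, and the density $|\cC_v|=(n-1)/2$ gives plenty of room to select a transposition witnessing the merger. The main obstacle I expect is the case in which, at every shared vertex of two antifaces $F_1,F_2$, the in-arcs used by $F_1$ and $F_2$ are positioned so that no single transposition merges them without also splitting some third antiface $F_3$. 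Handling this will require preliminary transpositions---which may leave the antiface count unchanged but reshuffle the local positions of $F_1$ and $F_2$---before the final merging transposition. Availability of such preliminary moves relies on $D$ being a tournament (every pair of vertices is joined by an arc) and $\cC$ being a Steiner-type decomposition in which every pair of vertices lies in some triangle, giving the auxiliary antiface multigraph on $\cC$ enough connectivity for the surgery to succeed.
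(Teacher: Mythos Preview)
In this paper Theorem~\ref{dirsteiner} is not proved directly: it is quoted from \cite{GMS20}, and the paper's own contribution is the observation (Section~\ref{MainAndConseq}) that it is a special case of Theorem~\ref{densemain}. For a regular tournament $D$ on $n$ vertices one has $\usg{D}=K_n$, so $\delta(\usg{D})=n-1\ge(4n+2)/5$ whenever $n\ge7$; Theorem~\ref{densemain} then gives an oriented directed embedding with $\cC$ as profaces and one or two antifaces, and your (correct) parity computation forces exactly one. The residual cases $n\in\{1,3\}$ are trivial.

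Your proposal heads in a different direction: a direct, triangle-specific argument via single-vertex rotation swaps, closer in spirit to the original proof in \cite{GMS20} than to the density machinery of this paper. That is a reasonable alternative, and the framework you set up (rotations at $v$ as cyclic orders on $\cC_v$, antifaces as orbits of a permutation on arcs, the parity check) is sound. However, what you have written is a \emph{plan}, not a proof. The heart of the matter is your ``local modification lemma''---that with three or more antifaces one can always find a vertex and a transposition reducing the antiface count by two---and you do not prove it. You yourself name the obstruction: at a shared vertex the arcs of two antifaces $F_1,F_2$ may sit so that every transposition merging them also splits a third antiface. Your proposed remedy is ``preliminary transpositions'' justified by the tournament/Steiner structure giving ``enough connectivity for the surgery to succeed,'' but that is an assertion, not an argument. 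This step is precisely where the real work lies---compare the paper's Interlaced Faces Lemma~\ref{interlacing}, Three Neighbor Lemma~\ref{3adj}, Diamond Lemma~\ref{diamond}, and Blow Up Lemma~\ref{blowup}, all of which exist to guarantee that a suitable local move can always be found when the Three Face Lemma~\ref{3face}(a) alone does not apply. Until you supply that analysis, the proposal is incomplete.
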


\begin{theorem}[Griggs, McCourt, and \siran. {\cite[Theorem 1.2]{GMS20}}]
\label{dirlatin}
Let $\cC$ be an oriented latin square of odd order, i.e., a decomposition of an eulerian orientation $D$ of a balanced complete tripartite graph $K_{p,p,p}$ into directed triangles.  Then there is an orientable directed embedding of $D$ with the elements of $\cC$ as the profaces and with exactly one directed euler circuit antiface.
\end{theorem}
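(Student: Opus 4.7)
The plan is to start from an initial oriented directed embedding and iteratively reduce the number of antifaces via local modifications. By Lemma \ref{dcdembedding} there exists an oriented directed embedding $\Phi_0$ of $D$ in which the triangles of $\cC$ are exactly the profaces. Writing $a(\Phi)$ for the number of antifaces of such an embedding $\Phi$ and noting that $|V(D)|=3p$, $|A(D)|=3p^2$, and $|\cC|=p^2$, Observation \ref{diremb-relcd}(b) tells us that $a(\Phi)$ has the same parity as $3p+4p^2$, which is odd when $p$ is odd. Hence $a(\Phi)=1$ is compatible with parity, and the only obstruction to establishing the theorem is ruling out $a(\Phi)\ge 3$.

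At each vertex $v$, the $p$ triangles of $\cC$ through $v$ partition its $2p$ half-arcs into $p$ \emph{triangle-pairs}, each consisting of one incoming and one outgoing half-arc from the same triangle of $\cC$. Any rotation at $v$ that realises $\cC$ as profaces is determined by a cyclic order on these $p$ triangle-pairs; the relative order within each pair is forced by the alternation condition of Observation \ref{diremb-basic-s}(a) together with the direction in which the triangle traverses $v$. Define a \emph{swap at $v$} to be the transposition of two adjacent triangle-pairs in that cyclic order. When the two swapped pairs lie on distinct antifaces the swap merges them and decreases $a(\Phi)$ by $2$; when they lie on the same antiface it splits that antiface and increases $a(\Phi)$ by $2$.

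The core technical step, and what I anticipate will be the principal obstacle, is the following \emph{merging lemma}: whenever $a(\Phi)\ge 3$, some vertex admits a merging swap. I would prove this by contradiction. If no merging swap exists, then at every vertex the triangle-pairs belonging to any single antiface occupy a contiguous arc of the cyclic order, so each antiface meets every vertex it visits in exactly one consecutive block. Exploiting the latin-square structure of $\cC$ and the symmetry among the three parts of $K_{p,p,p}$ (rows, columns, symbols, linked coordinate-wise by $\cC$), one can set up an auxiliary bipartite incidence structure recording which antifaces meet which rows, columns, and symbols, and derive a contradiction from a Hall-type counting argument combined with the connectedness of $D$. The hypothesis that $p$ is odd is essential both for the parity step above and for excluding a genuine two-antiface obstruction that can appear in the even case. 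Iterating the merging lemma then drives $a(\Phi)$ down to $1$, yielding the required embedding whose unique antiface is necessarily bounded by a directed euler circuit.
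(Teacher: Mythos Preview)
First, note that the paper does not actually prove Theorem \ref{dirlatin}. It is quoted in Section \ref{previousresults} as a prior result from \cite{GMS20}, and the paper explicitly remarks that its main theorem (Theorem \ref{densemain}) is \emph{not} strong enough to imply Theorem \ref{dirlatin}, because $K_{p,p,p}$ fails the density condition $\delta(\usg{D})\ge(4n+2)/5$. So there is no proof in this paper to compare your attempt against; the proof you would need to look at is the triangle-specific argument in \cite{GMS20}.

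Turning to your proposal itself, there is a genuine gap, and it occurs before the part you flag as the ``principal obstacle''. Your description of a swap is not correct. Transposing two adjacent triangle-pairs $P_i,P_{i+1}$ in the cyclic order at $v$ alters three antiface transitions, namely those between $P_{i-1}$ and $P_i$, between $P_i$ and $P_{i+1}$, and between $P_{i+1}$ and $P_{i+2}$; this is exactly the operation in the Three Face Lemma (\ref{3face}). The effect therefore depends on whether those three transitions lie on three, two, or one distinct antifaces, not on whether ``the two swapped pairs lie on distinct antifaces'' (a phrase that is itself ambiguous, since a triangle-pair does not belong to a single antiface: its incoming half-arc and its outgoing half-arc may lie on different antifaces). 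In particular, your dichotomy ``distinct $\Rightarrow$ merge, same $\Rightarrow$ split'' is false, and the consequence you draw (``each antiface meets every vertex in one contiguous block'') does not follow. For instance, if the antiface colours of the transitions around $v$ alternate $1,2,1,2,\dots$, then no swap merges three distinct antifaces, yet neither antiface is contiguous at $v$.

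Even granting a corrected local analysis, the merging lemma is the entire content of the theorem, and you have not proved it. ``Set up an auxiliary bipartite incidence structure and use a Hall-type counting argument'' is not a proof sketch with enough detail to be checkable, and nothing in it explains how the latin-square structure or the hypothesis that $p$ is odd would be used at that point (your only use of oddness is the parity computation, which is necessary but far from sufficient). The argument in \cite{GMS20} is genuinely specific to triangle decompositions, and the paper under review develops an entirely different machinery (interlacing, the Blow Up Lemma, touch graphs) precisely because simple local swaps are not enough even in the dense case. Until you can carry out the merging step concretely, you do not have a proof.
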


Erskine, Griggs, and \siran. also investigated the situation for graphs and digraphs obtained from symmetric $3$-configurations, obtaining a positive result for small cases but a negative result in general.

\begin{theorem}[Erskine, Griggs, and \siran. {\cite[Theorems 2.1 and 3.2]{EGS20}}]
\label{configuration}
Suppose $G$ is the associated graph of a symmetric configuration $n_3$, i.e., $G$ is a $6$-regular $n$-vertex simple graph with a decomposition $\cT$ into triangles.  Let $\cC$ be a collection of directed triangles obtained by directing the elements of $\cT$, and let $D$ be the digraph obtained from $G$ by directing the edges as specified by $\cC$.

If $n$ is odd and $7 \le n \le 19$, then every for every such $D$ and $\cC$ there is an orientable directed embedding of $D$ with the elements of $\cC$ as the profaces and with exactly one directed euler circuit antiface.

If $n$ is odd and $n \ge 21$ then there exists such a graph $G=G_0$ with decomposition $\cT=\cT_0$ such that there is no orientable embedding of $G_0$ with the elements of $\cT_0$ as faces and exactly one additional face.  Thus, for every $D$ and $\cC$ derived as above from $G_0$ and $\cT_0$ there is no orientable directed embedding of $D$ with the elements of $\cC$ as the profaces and with exactly one antiface.
\end{theorem}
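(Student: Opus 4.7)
The plan splits into a computational positive part for $n \in \{7,9,\ldots,19\}$ and a constructive negative part for $n \ge 21$. In both parts the key local observation is that at each vertex $v$ the six half-arcs split into three triangle-pairs, and these pairs must appear as three consecutive blocks in the cyclic rotation so that each triangle of $\cT$ becomes a face; by Observation \ref{diremb-basic-s}(a) the in/out directions must also alternate at $v$. Consequently only a bounded constant number of rotations at each vertex are admissible.

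For the positive part, symmetric configurations $n_3$ are classified up to isomorphism for each small $n$, and for each one the admissible global rotation systems, taken over all orientations $\cC$ of the triangles, form a finite set. I would verify the existence of an upper directed embedding relative to $\cC$ by exhaustive face-tracing, or equivalently by applying the relative maximum genus formulas of Bonnington \cite{Bon94} or Archdeacon, Bonnington, and \siran{} \cite{ABS98} and comparing with the value $g = (n+1)/2$ forced by Euler's formula when only one antiface is present. This is a finite computer check over the range $7 \le n \le 19$.

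For the negative part, first note that Observation \ref{diremb-relcd}(b) permits one new face when $n$ is odd: $|V|+|A(D)|+|\cC| = n + 3n + n = 5n$ is odd, so any obstruction must be structural rather than parity-based. The plan is to exhibit an explicit configuration $(G_0,\cT_0)$ on $n=21$ points in which the local rotation constraints above propagate globally to force at least two new faces in every rotation system with $\cT_0$ as faces, and then to extend to each larger odd $n$ by taking disjoint sums with configurations that preserve the obstruction. Because every orientation of the triangles only further restricts the admissible rotations, the undirected obstruction is inherited by the directed setting, yielding the second assertion.

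The hardest step is identifying the right structural obstruction on $21$ points: a combinatorial invariant, for instance a lower bound extracted from the deficiency term in \cite{ABS98}, that forces at least two new faces in every admissible rotation system and that is preserved under the scaling-up operation. Once that invariant is in hand, the bounded local admissibility at vertices and the global direct-sum extension are routine.
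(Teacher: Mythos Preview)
This theorem is not proved in the paper at all: it is quoted from Erskine, Griggs, and \siran{} \cite{EGS20} as background in Section~\ref{previousresults}, with the remark that the proofs in \cite{EGS20} ``use arguments that are specific to triangle decompositions.'' So there is no in-paper proof to compare your proposal against; I can only assess the proposal on its own merits.

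Your positive part is a reasonable sketch and is presumably close in spirit to \cite[Theorem~2.1]{EGS20}: a finite verification over all $n_3$ configurations with $n$ odd, $7\le n\le 19$, exploiting the fact that only finitely many rotations at each vertex are compatible with having the triangles as profaces. Be aware, though, that the enumeration is far from small --- the number of $n_3$ configurations at $n=19$ is in the billions --- so ``finite computer check'' understates what is required; some structural reduction or clever search is needed, and you have not indicated what that would be.

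Your negative part has a genuine gap. Extending a single bad example on $21$ vertices to all odd $n\ge 21$ by ``taking disjoint sums with configurations'' does not work for two reasons. First, a disjoint sum yields a disconnected $G_0$, which trivially has no cellular embedding in a connected surface with a single extra (euler circuit) face; this vacuously ``proves'' the statement but misses the point, and indeed the paper later (Section~\ref{sec:conclusion}) cites the examples from \cite{EGS20} as being $4$-edge-connected, so the intended construction is certainly connected. Second, even ignoring connectivity, the smallest $n_3$ configuration has $7$ points, so from $n=21$ disjoint sums cannot reach $n=23,25,27$. You need either a direct construction for each odd $n\ge 21$, or a genuinely connected ``gluing'' operation together with base cases covering all residues, plus an argument that the obstruction (e.g.\ a deficiency bound in the sense of \cite{ABS98}) survives the gluing. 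None of that is supplied, and it is exactly the content of \cite[Theorem~3.2]{EGS20}.
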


The proofs of Theorems \ref{steiner} through \ref{configuration} use arguments that are specific to triangle decompositions.

The current authors investigated maximum genus of embeddings relative to an euler circuit, and of directed embeddings relative to a directed euler circuit, in \cite{EE-M2mod4}.  We showed \cite{EE-M2mod4} that in every eulerian graph or digraph where every vertex has degree $2$ mod $4$, every (directed) euler circuit can be extended to an orientable (directed) embedding by adding a second (directed) euler circuit, giving a relative upper embedding.  One way to derive this result is by applying Bonnington's formula for maximum genus of a relative oriented embedding \cite{Bon94}, although that is not the approach used in \cite{EE-M2mod4}.  Theorem \ref{configuration} above from \cite{EGS20} shows that our result unfortunately does not generalize to extending an arbitrary circuit decomposition; see \cite[Section 5]{EE-M2mod4} for further discussion.

The main results of this paper (Theorem \ref{densemain} and its corollaries) encompass a number of the results mentioned above, specifically Theorem \ref{tournament}, the orientable part of Theorem \ref{steiner}, and Theorem \ref{dirsteiner}.  Our main results are not, however, strong enough to imply the orientable part of Theorem \ref{latin}, or Theorem \ref{dirlatin}, because balanced complete tripartite graphs do not satisfy our density condition.

\section{Statement of results}\label{sec:stateresults}

In this section we state our results for `dense' eulerian graphs and digraphs.  We also provide results for all eulerian graphs and digraphs with only one or two vertices (but arbitrarily many edges or arcs).

\subsection{The main result and some consequences}
\label{MainAndConseq}

In this subsection we consider `dense' eulerian digraphs, where `dense' means that each vertex is adjacent to at least roughly $4/5$ of the other vertices.
We show that given a collection $\cC$ of edge-disjoint directed circuits in a dense eulerian digraph, we can find a directed orientable embedding in which all elements of $\cC$ are profaces, and there are at most two antifaces.
This means that we can determine the maximum genus of an orientable directed embedding in which the elements of $\cC$ are required to be profaces.
As a special case, and subject to a necessary parity condition we can find an orientable bi-eulerian directed embedding where one of the euler circuit faces may be specified in advance.

Our main result, and many of our arguments, rely only on information about whether two vertices are adjacent in a digraph, and not how many arcs are between them or directions of the arcs.  Thus, it is often convenient to work with the undirected simple graph underlying a digraph $D$, which we denote by $\usg{D}$.  The  graph $\usg{D}$ has the same vertex set as $D$,  and distinct vertices $u$ and $v$ are adjacent in $\usg{D}$  if and only if they are adjacent in $D$.
Moreover, if $W$ is a directed walk (such as a facial walk of a directed embedding) in $D$, then $\usg{W}$ is the corresponding walk in $\usg{D}$: we take the sequence of vertices in $W$, eliminating consecutive repetitions of a vertex (created by following loops).  We also treat $\usg{W}$ as a subgraph of $\usg{D}$; whether we are thinking of it as a walk or a subgraph should be clear from context.

The main result of this paper is the following.

\def\densemaintext{% Text is defined as a macro so we can restate later 
Let $D$ be an $n$-vertex eulerian digraph where $\delta(\usg{D})$, the minimum degree of the underlying simple undirected graph of $D$, is at least $(4n+2)/5$.
Let $\cC$ be a directed circuit decomposition of $D$.
Then there is a directed embedding of $D$ in an oriented surface (i.e., an orientable surface with a specific orientation) with the elements of $\cC$ as the profaces and with exactly one or two antifaces, depending on whether $|V(D)|+|A(D)|+|\cC|$ is odd or even, respectively.

This embedding has maximum genus among all orientable directed embeddings of $D$ in which all elements of $\cC$ are faces.

}

\begin{theorem}\label{densemain}
\densemaintext
\end{theorem}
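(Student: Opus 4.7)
My plan is to start with an oriented directed embedding $\Phi_0$ of $D$ in which $\cC$ is the collection of profaces; such an embedding exists by Lemma \ref{dcdembedding}. Writing $\beta$ for the number of antifaces, Observation \ref{diremb-relcd}(b) tells us that $\beta$ has the same fixed parity as $|V(D)|+|A(D)|+|\cC|$, matching the target value $\beta \in \{1,2\}$. The proof then proceeds by iteratively modifying the embedding via local rotation changes that preserve $\cC$ as the collection of profaces but decrease $\beta$ by $2$ per step, stopping once $\beta \in \{1,2\}$.

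The core technical tool is a \emph{local merge operation} at a vertex $v$. Once the proface transitions at $v$ are fixed (as prescribed by $\cC$), the remaining freedom in the rotation at $v$ is a choice of cyclic ordering on the incoming half-arcs in $A^-(v)$: each outgoing half-arc $g$ must sit immediately before its proface partner in the rotation, so the full rotation is determined by this cyclic order. A suitable reordering involving three incoming half-arcs lying in three distinct antifaces merges those three antifaces into a single antiface, decreasing $\beta$ by exactly $2$. The fact that any two single-cycle permutations on the same finite set differ by an even permutation explains why $\beta$ can only change in even steps, which is exactly the parity constraint from Observation \ref{diremb-relcd}(b).

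The main obstacle is proving the existence of such a mergeable configuration whenever $\beta \geq 3$. This is precisely where the density hypothesis $\delta(\usg{D}) \geq (4n+2)/5$ is used. If no such triple $(v; h_1, h_2, h_3)$ existed, the antifaces would be very restricted in how they share vertices of $D$; translating this structural restriction into adjacency constraints in $\usg{D}$ should contradict the minimum-degree bound. The specific constant $4/5$ suggests that the extremal scenario partitions the vertex set into roughly five classes, based on antiface participation, with each class bounded in size by about $n/5$. This delicate combinatorial step is presumably the content of the supplementary lemmas in Section \ref{SupLemmas}, and I expect it to be the hardest and most paper-specific part of the argument.

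Once the iteration halts at $\beta \in \{1,2\}$, the maximum-genus assertion is immediate from Euler's formula: an orientable directed embedding of $D$ with $|\cC|+\beta$ faces has Euler genus $2 - |V(D)| + |A(D)| - |\cC| - \beta$, so maximizing genus subject to $\cC$ being among the faces amounts to minimizing $\beta$. Since $\beta$ has fixed parity and $\beta \geq 1$, its minimum value is $1$ (odd case) or $2$ (even case), both of which are achieved by the construction.
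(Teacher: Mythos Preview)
Your overall framework matches the paper: start from an embedding with $\cC$ as profaces via Lemma~\ref{dcdembedding}, iteratively reduce the number $\beta$ of antifaces by two at a time while preserving the profaces, and deduce the maximum-genus claim from Euler's formula and the parity constraint. That part is fine.

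The genuine gap is your central claim that whenever $\beta\ge 3$ the density hypothesis forces some vertex to lie in three distinct antifaces, so that the simple three-face merge (Lemma~\ref{3face}(a)) always applies. You do not prove this, and the paper does \emph{not} proceed this way: locally irreducible embeddings (where every vertex lies in at most two antifaces) with $\beta\ge 3$ can and do occur under the density hypothesis, and the bulk of the paper's work is handling exactly these. The paper's reduction requires two further tools you do not mention: part~(b) of the Three Face Lemma, which rearranges two antifaces without changing $\beta$, and its consequences the Interlaced Faces Lemma~(\ref{interlacing}) and the Blow Up Lemma~(\ref{blowup}). When no vertex meets three antifaces, one must first find two vertices \emph{interlaced} on a common antiface (via the Three Neighbor Lemma, Diamond Lemma, or their density corollaries), or else actively reshape the antifaces via blow-ups to manufacture such interlacing, and only then can a merge be executed. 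The argument is a substantial case analysis organized by the \emph{touch graph} of the locally irreducible embedding.

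Your heuristic that the constant $4/5$ arises from a partition into ``roughly five classes'' is also off. Writing $k=n-1-\delta(\usg{D})$, the hypothesis becomes $n\ge 5k+7$, and the~$5$ enters because blowing up a large antiface produces two antifaces with at least $\tfrac12(n-k)-1$ vertices each, and one needs this to be at least $2k+3$ to trigger the Big and Moderate Faces Corollary~(\ref{BMFlemma}). So the constant is dictated by the interaction of the Blow Up Lemma with the interlacing thresholds, not by a five-way vertex partition.
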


As mentioned in Section \ref{previousresults}, Theorem \ref{densemain} generalizes Theorems \ref{tournament} and \ref{dirsteiner}, but not Theorem \ref{dirlatin}, because $K_{p,p,p}$ is not dense enough.
Because the proof of Theorem \ref{densemain} is rather long and technical, we defer it to Sections \ref{SupLemmas} and \ref{MainProof}, in order to state here some immediate corollaries that illustrate its impact and application.

If $\cC$ consists of a single directed euler circuit, then we have the following.

\begin{corollary}\label{denseotef}
Let $D$ be an $n$-vertex eulerian digraph where $\delta(\usg{D}) \ge (4n+2)/5$.
Let $T$ be a directed euler circuit in $D$.  Then there is an oriented directed embedding of $D$ with $T$ as the only proface, and with exactly one or two antifaces, depending on whether $|V(D)|+|A(D)|$ is even or odd, respectively.

This embedding has maximum genus among all orientable directed embeddings of $D$.  When $|V(D)|+|A(D)|$ is even this embedding is an orientable bi-eulerian directed embedding of $D$ with $T$ as a specified face.
\end{corollary}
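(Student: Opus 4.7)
The plan is essentially to specialize Theorem \ref{densemain} to the singleton directed circuit decomposition $\cC = \{T\}$. Since $T$ is a directed euler circuit, $\{T\}$ is indeed a directed circuit decomposition, and the density hypothesis $\delta(\usg{D}) \ge (4n+2)/5$ transfers verbatim. Because $|\cC| = 1$, the parity condition ``$|V(D)|+|A(D)|+|\cC|$ odd or even'' becomes ``$|V(D)|+|A(D)|$ even or odd'' respectively, so Theorem \ref{densemain} immediately yields an oriented directed embedding of $D$ in which $T$ is the unique proface and the number of antifaces is $1$ or $2$ in agreement with the corollary's parity conditions.

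The only substantive work remaining is to upgrade the maximum genus claim. Theorem \ref{densemain} guarantees maximum genus only among orientable directed embeddings that have $T$ as a face, but the corollary asserts maximum genus over all orientable directed embeddings of $D$. To bridge this gap I would argue by minimizing the face count. Any orientable directed embedding of a (nontrivial) eulerian digraph has at least one proface and one antiface by Observation \ref{diremb-basic-s}\ref{db-ori-2fc}, so $f \ge 2$. Moreover, Euler's formula forces $f \equiv |V(D)|+|A(D)| \pmod{2}$ (equivalently, apply Observation \ref{diremb-relcd}(b) with $\cC$ equal to the set of profaces). Hence $f \ge 3$ whenever $|V(D)|+|A(D)|$ is odd. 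Our embedding attains these lower bounds, and since $|V|-|E|+|F| = 2-2g$ means minimizing $f$ maximizes $g$, the embedding has maximum genus over \emph{all} orientable directed embeddings of $D$.

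For the final sentence, when $|V(D)|+|A(D)|$ is even the embedding has exactly two faces: the specified proface $T$ and a single antiface. In any directed embedding each arc is incident to one proface and one antiface, so the unique antiface necessarily traverses every arc of $D$ exactly once and is itself a directed euler circuit; the embedding is therefore bi-eulerian with $T$ as a specified face. No real obstacle is anticipated here --- everything reduces to invoking Theorem \ref{densemain} and a short parity-and-Euler-formula argument --- so the bulk of the difficulty in proving this corollary is entirely subsumed by the proof of the main theorem.
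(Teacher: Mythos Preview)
Your proposal is correct and follows the paper's approach: the paper derives the corollary by specializing Theorem~\ref{densemain} to $\cC=\{T\}$, and you do exactly this. You have also correctly identified and filled the small gap between ``maximum genus among embeddings with $T$ as a face'' (what Theorem~\ref{densemain} gives) and ``maximum genus among all orientable directed embeddings'' (what the corollary claims), via the face-count and parity argument; the paper leaves this step implicit.
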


When $|V(D)|+|A(D)|$ is even in Corollary \ref{denseotef}, we can also apply Observation \ref{diremb-basic-s}\ref{db-ori-2fc} to conclude that \emph{every} maximum genus orientable directed embedding of $D$ is bi-eulerian.

By extending a set $\cC$ of arc-disjoint directed circuits to a circuit decomposition using an euler circuit in each component of the subgraph induced by the unused arcs, we also obtain the following corollary.

\begin{corollary}\label{densemaincor}
Let $D$ be an $n$-vertex eulerian digraph where $\delta(\usg{D}) \ge (4n+2)/5$.
Let $\cC$ be a set of arc-disjoint directed circuits in $D$, let $A(\cC)$ be the set of arcs used by elements of $\cC$, and let $\alpha(\cC)$ be the number of components of $D - A(\cC)$ with at least one edge.
Then there is an oriented directed embedding of $D$ with $|\cC| + \alpha(\cC)$ profaces and with exactly one or two antifaces, depending on whether $|V(D)|+|A(D)|+|\cC|+\alpha(\cC)$ is odd or even, respectively.

This embedding has maximum genus among all orientable directed embeddings of $D$ in which all elements of $\cC$ are faces.
\end{corollary}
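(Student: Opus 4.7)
The plan is to reduce directly to Theorem~\ref{densemain} by completing $\cC$ to a full directed circuit decomposition of $D$. The key observation is that since $D$ is eulerian and each element of $\cC$ is a directed circuit, $\indeg$ equals $\outdeg$ at every vertex of both $D$ and $D - A(\cC)$, so each non-trivial connected component of $D - A(\cC)$ is itself a connected eulerian digraph and therefore admits a directed euler circuit.

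I would fix such a directed euler circuit in each of the $\alpha(\cC)$ non-trivial components of $D - A(\cC)$ and let $\cC'$ be the union of $\cC$ with these circuits; then $\cC'$ is a directed circuit decomposition of $D$ with $|\cC'| = |\cC| + \alpha(\cC)$. Since the density hypothesis is a property of $D$ alone, Theorem~\ref{densemain} applies to the pair $(D, \cC')$ and produces an oriented directed embedding of $D$ whose profaces are precisely the elements of $\cC'$ and which has one or two antifaces, the number being determined by the parity of $|V(D)|+|A(D)|+|\cC'| = |V(D)|+|A(D)|+|\cC|+\alpha(\cC)$. In particular, all elements of $\cC$ appear among the profaces, the total number of profaces is $|\cC|+\alpha(\cC)$, and the antiface count matches the statement of the corollary.

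The main subtlety is the maximum genus claim, since Theorem~\ref{densemain} yields maximum genus only among embeddings required to have every element of $\cC'$ as a face, while the corollary asserts maximum genus over the larger class of embeddings requiring only $\cC$ to appear among the faces. For any such embedding $\Psi$, let $\cC_p$ and $\cC_a$ denote the intersections of $\cC$ with the profaces and antifaces of $\Psi$ respectively. The profaces of $\Psi$ form a directed circuit decomposition of $D$ extending $\cC_p$, so their count is at least $|\cC_p|+\alpha(\cC_p)$, where $\alpha(\cdot)$ again denotes the number of non-trivial components of the complementary subdigraph; similarly for the antifaces. When one of $\cC_p, \cC_a$ equals $\cC$, connectedness of $D$ immediately gives $|F(\Psi)| \geq |\cC|+\alpha(\cC)+1$. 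The mixed case, where both parts are nonempty, is the expected main obstacle; here I would leverage the density of $\usg{D}$ to argue that $D - A(\cC_p)$, $D - A(\cC_a)$, and $D - A(\cC)$ are each still connected as underlying graphs, forcing $\alpha(\cC_p)+\alpha(\cC_a) \geq 2 = \alpha(\cC)+1$ and hence the same lower bound on $|F(\Psi)|$. Finally, the parity relation $|F(\Psi)| \equiv |V(D)|+|A(D)| \pmod{2}$ coming from Euler's formula for orientable embeddings promotes this to $|F(\Psi)| \geq |\cC|+\alpha(\cC)+2$ in the even case, matching the face count of our construction and establishing that the constructed embedding realizes maximum genus.
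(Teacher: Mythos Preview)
Your first two paragraphs match the paper's approach exactly: the paper derives this corollary in a single sentence by extending $\cC$ to a directed circuit decomposition $\cC'$ via a directed euler circuit in each nontrivial component of $D - A(\cC)$, and then invoking Theorem~\ref{densemain}.

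The maximum genus claim is where your argument breaks down. Your split into the ``pure'' and ``mixed'' cases is the right framework, and the pure case ($\cC_p=\cC$ or $\cC_a=\cC$) is handled correctly. But in the mixed case the density argument you propose does not go through. The hypothesis $\delta(\usg{D}) \ge (4n+2)/5$ constrains only the \emph{simple} underlying graph of $D$; it says nothing about how many parallel arcs lie between a given pair of vertices, so $A(\cC)$ may contain every arc between many pairs and leave $D - A(\cC)$ badly disconnected. Concretely, take any $D$ satisfying the hypothesis that also has loops at two or more vertices, and let $\cC$ be a directed circuit decomposition of the loopless part of $D$; then $D - A(\cC)$ consists of isolated loops and $\alpha(\cC)\ge 2$, so your claimed conclusion $\alpha(\cC)=1$ fails. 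Consequently the key inequality $\alpha(\cC_p)+\alpha(\cC_a)\ge \alpha(\cC)+1$ is not established, and the bound $|F(\Psi)|\ge |\cC|+\alpha(\cC)+1$ remains unproven in the mixed case.

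The paper itself offers no argument for the maximum genus assertion of this corollary beyond the one-line extension to $\cC'$, so you are not overlooking a published proof; but your attempt to supply one has this genuine gap, and filling it (if the assertion is even correct in full generality) would require a different idea than connectivity of the leftover digraphs.
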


We also have corollaries for undirected graphs.  Let $\sg{G}$ denote the underlying simple graph of a graph $G$.  We state the undirected counterparts of Theorem \ref{densemain} and Corollary \ref{denseotef}.  In each case the first paragraph is obtained by applying the digraph result to an eulerian orientation of the graph $G$ that makes each element of $\cC$ into a directed circuit, and the conclusions about maximum genus in the second paragraph are easily verified.
There is also an undirected version of Corollary \ref{densemaincor}, whose statement we leave to the reader.

\begin{corollary}\label{undirdensemain}
Let $G$ be an $n$-vertex eulerian graph where $\delta(\sg{G})$, the minimum degree of the underlying simple graph of $G$, is at least $(4n+2)/5$.
Let $\cC$ be a circuit decomposition of $G$.
Then there is a $2$-face-colorable orientable embedding of $G$ with the elements of $\cC$ as one of the color classes and with exactly one or two faces of the other color, depending on whether $|V(G)|+|E(G)|+|\cC|$ is odd or even, respectively.

This embedding has maximum genus among all orientable embeddings of $G$ in which all elements of $\cC$ are faces.
\end{corollary}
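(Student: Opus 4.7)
The plan is to reduce to the digraph result Theorem \ref{densemain} by orienting $G$ in a way compatible with $\cC$, just as the excerpt suggests. First I would construct an eulerian orientation $D$ of $G$ as follows: traverse each circuit $C\in\cC$ in some arbitrarily chosen direction, and orient each edge the way it is traversed. Because $\cC$ is a circuit decomposition, every edge of $G$ is oriented exactly once. At each vertex $v$, every visit of a circuit $C\in\cC$ uses one incoming half-edge and one outgoing half-edge (relative to the chosen direction of $C$), so $\indeg_D(v)=\outdeg_D(v)$ and $D$ is eulerian. Under this construction $\cC$ is a directed circuit decomposition of $D$, $\usg{D}=\sg{G}$, and hence $\delta(\usg{D})=\delta(\sg{G})\ge (4n+2)/5$.

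Next I would apply Theorem \ref{densemain} to $D$ and $\cC$. This produces an oriented directed embedding $\Phi$ of $D$ in which the profaces are exactly the elements of $\cC$, and in which there are exactly one or two antifaces, according to whether $|V(D)|+|A(D)|+|\cC|=|V(G)|+|E(G)|+|\cC|$ is odd or even, respectively. By Observation \ref{diremb-basic-s}\ref{db-ori-2fc}, viewing $\Phi$ as an embedding of $G$ (forgetting arc directions), the profaces and antifaces form the two color classes of a $2$-face-colouring, so $\Phi$ is the desired $2$-face-colorable orientable embedding of $G$ with $\cC$ as one color class.

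For the maximum-genus claim, let $\Phi'$ be any orientable embedding of $G$ in which every element of $\cC$ is a facial walk, with Euler genus $\gamma(\Phi')$ and $k$ new (non-$\cC$) faces. Euler's formula gives
\[
k \;=\; 2-\gamma(\Phi')-|V(G)|+|E(G)|-|\cC|,
\]
so $k$ has the same parity as $|V(G)|+|E(G)|+|\cC|$. Hence $k\ge 1$ in the odd case and $k\ge 2$ in the even case; our embedding achieves this minimum, and therefore the corresponding maximum of $\gamma(\Phi')$.

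The only real work is the opening orientation step, which is elementary; all of the substance is carried by Theorem \ref{densemain}, invoked here as a black box. The only thing requiring mild care is confirming that the orientation built from $\cC$ is actually eulerian and that $\usg{D}$ coincides with $\sg{G}$, so that the density hypothesis transfers cleanly from $G$ to $D$.
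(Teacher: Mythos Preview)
Your proposal is correct and matches the paper's approach exactly: orient each circuit of $\cC$ to obtain an eulerian digraph $D$ with $\usg{D}=\sg{G}$, apply Theorem~\ref{densemain}, and then verify the maximum-genus claim via Euler's formula and the parity of the Euler genus of an orientable surface. The paper gives no more detail than you do.
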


\begin{corollary}\label{undirdenseotef}
Let $G$ be an $n$-vertex eulerian graph where $\delta(\sg{G}) \ge (4n+2)/5$.
Let $T$ be an euler circuit in $G$.  Then there is a $2$-face-colorable orientable embedding of $G$ with $T$ as the unique face of one color and with exactly one or two faces of the other color, depending on whether $|V(G)|+|E(G)|$ is even or odd, respectively.

This embedding has maximum genus among all $2$-face-colorable orientable embeddings of $G$.
When $|V(G)|+|E(G)|$ is even this embedding is an orientable bi-eulerian embedding of $G$ with $T$ as a specified face, and the embedding has maximum genus among all orientable embeddings of $G$.
\end{corollary}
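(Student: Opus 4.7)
The plan is to reduce to Corollary~\ref{denseotef} by orienting $G$. I would first direct each edge of $G$ in the direction it is traversed by $T$, producing an eulerian digraph $D$ with $T$ a directed euler circuit, which I denote $T'$. Since $\usg{D} = \sg{G}$, the density hypothesis $\delta(\sg{G}) \ge (4n+2)/5$ passes directly to $\delta(\usg{D}) \ge (4n+2)/5$, so Corollary~\ref{denseotef} supplies an oriented directed embedding $\Phi$ of $D$ whose only proface is $T'$ and whose number of antifaces is one or two, matching the parity of $|V(D)|+|A(D)|=|V(G)|+|E(G)|$.

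Viewing $\Phi$ as an embedding of $G$ and invoking Observation~\ref{diremb-basic-s}\ref{db-ori-2fc}, the profaces and antifaces form the two color classes of a $2$-face-coloring, giving precisely the embedding claimed in the first paragraph of the statement.

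The first maximum-genus assertion requires a converse. Any $2$-face-colorable orientable embedding $\Psi$ of $G$ with $T$ among its faces becomes, after directing each edge so that the color class containing $T$ lies on a fixed side, a directed embedding of the \emph{same} digraph $D$ (the orientation is forced because $T$ uses each edge exactly once), with $T'$ as a face; the required half-arc alternation at each vertex is provided by the alternation of face colors in the rotation, as in Observation~\ref{diremb-basic-s}\ref{db-alt-s}. Thus the $2$-face-colorable orientable embeddings of $G$ containing $T$ correspond bijectively to the orientable directed embeddings of $D$ containing $T'$, and the maximum-genus assertion in Corollary~\ref{denseotef} transfers.

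Finally, when $|V(G)|+|E(G)|$ is even, the produced embedding has exactly two euler-circuit faces, so it is bi-eulerian with $T$ a specified face. For maximum genus among \emph{all} orientable embeddings of $G$, Euler's formula with even Euler genus forces $|F| \equiv |V(G)|+|E(G)| \pmod 2$; hence $|F| \ge 2$ when $|V(G)|+|E(G)|$ is even, and our embedding attains this bound. The only real subtlety is verifying that the correspondence in the third paragraph genuinely yields the digraph $D$ and not some other orientation of $G$, which follows because $T$ is an euler circuit and thus pins down the direction of every edge.
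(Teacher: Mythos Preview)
Your reduction to Corollary~\ref{denseotef} via the orientation induced by $T$ is exactly the paper's approach, and your first two paragraphs are correct.

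There is a small gap in your third paragraph. The first maximum-genus assertion concerns \emph{all} $2$-face-colorable orientable embeddings of $G$, not only those having $T$ as a face; your bijection handles only the latter (a $2$-face-colorable orientable embedding of $G$ need not contain $T$ as a face, and then the induced orientation need not be $D$). The fix is simply to reuse the parity argument from your fourth paragraph: any $2$-face-colorable orientable embedding of $G$ has at least two faces (one of each color), and Euler's formula forces $|F| \equiv |V(G)|+|E(G)| \pmod 2$, so $|F| \ge 2$ or $|F| \ge 3$ according to parity; the embedding $\Phi$ attains this minimum in either case. This is presumably what the paper means by ``easily verified.'' Your bijection argument, while correct for what it actually proves, is unnecessary here.
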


When $|V(D)|+|A(D)|$ is even in Corollary \ref{denseotef}, all maximum genus directed embeddings are bi-eulerian.  However, when $|V(G)|+|E(G)|$ is even in Corollary \ref{undirdenseotef}, the embeddings provided by the corollary are bi-eulerian maximum genus embeddings, but there may be other maximum genus embeddings that are not bi-eulerian, as the following example shows.

\begin{example}
Suppose we have an eulerian graph $G$ satisfying our density condition, with a decomposition into three circuits $C_1, C_2, C_3$, all of which contain the same vertex $v$.  (Such situations are not hard to construct.)  Apply Corollary \ref{undirdensemain} to complete $\{C_1, C_2, C_3\}$ to an embedding $\Phi$ using an euler circuit $T$.  
By applying an undirected version of part (a) of the Three Face Lemma (Lemma \ref{3face} below) to $\Phi$ at $v$
we can merge $C_1$, $C_2$, and $C_3$ into a single face, giving a bi-eulerian embedding, or alternatively merge $C_1$, $C_2$ and $T$ into a single face, giving a two-face orientable embedding that is not bi-eulerian.
\end{example}

Our results can be combined with known results on the existence of triangular decompositions in dense graphs, to give maximum genus embeddings using these decompositions.  We say a graph is \emph{triangle-divisible} if its number of edges is divisible by $3$ and every vertex has even degree.  In \cite{NW70}, Nash-Williams conjectured that there is some constant $c$ such that for sufficiently large $n$, every $n$-vertex triangle-divisible simple graph $G$ with $\delta(G) \ge cn$ has a decomposition into edge-disjoint triangles, and Graham pointed out that $c$ must be at least $3/4$.  The current best result on Nash-Williams' conjecture is as follows.

\begin{theorem}[{Delcourt and Postle \cite{DePo21}}]\label{nwbest}
For every $\ep > 0$ there is $N(\ep)$ such that every $n$-vertex triangle-divisible simple graph $G$ with $n \ge N(\ep)$ and $\delta(G) \ge ((7+\sqrt{21})/14 + \ep) n$ has a decomposition into edge-disjoint triangles.
\end{theorem}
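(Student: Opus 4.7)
This is the Delcourt--Postle theorem, cited from the outside, so I would attack it using the \emph{iterative absorption} framework developed by Barber, K\"uhn, Lo, and Osthus and refined in a sequence of papers. The plan has three steps: (i) set aside a small absorber subgraph $A\subseteq G$; (ii) approximately decompose $G-E(A)$ into triangles by a nibble-style semi-random greedy process; (iii) use $A$ to swallow the small, triangle-divisible leftover and produce a genuine decomposition of $G$.

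For the absorber, I would construct $A$ so that for every triangle-divisible graph $L$ on $V(G)$ with $|E(L)|$ very small and $E(L)\cap E(A)=\emptyset$, the union $A\cup L$ has a triangle decomposition. Such an $A$ can be produced by randomly planting many edge-disjoint ``boosting gadgets'' -- short trades of triangles sharing prescribed edges -- that can reroute any locally anomalous edge set back into a triangle-covered set. For the approximate decomposition step, I would apply a standard pseudorandom greedy process (e.g.\ the R\"odl nibble and its refinements): remove triangles one at a time, track edge-codegrees via concentration inequalities, and iterate in ``vortices'' of geometrically decreasing sizes so that the residual graph at each scale is quasi-regular and its leftover is confined to a tiny set. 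Because $G$ and the already-removed triangles are each triangle-divisible, so is the final leftover $L$, and the absorber then completes the decomposition.

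The main obstacle, and the source of the specific constant $(7+\sqrt{21})/14\approx 0.8273$, is the absorber step. The threshold arises as the minimum density at which the requisite boosting gadgets exist in abundance everywhere in $G$, which in turn comes from an extremal analysis of fractional triangle decompositions; below it the known extremal constructions (variants of blowups and Latin-square-like graphs) prevent absorption even though triangle divisibility holds. The core technical work is therefore a careful extremal-combinatorial argument showing that at density $((7+\sqrt{21})/14+\ep)n$ one can find sufficiently many vertex-disjoint gadgets of the right shape, hooked up to any prescribed anomaly in $L$, to construct an absorber that succeeds deterministically. This is precisely the advance of Delcourt and Postle over the earlier Barber--K\"uhn--Lo--Osthus bound, which had a larger leading constant.
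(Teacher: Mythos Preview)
The paper does not prove this statement at all: Theorem~\ref{nwbest} is quoted verbatim as an external result of Delcourt and Postle and is used only as a black box to derive Corollary~\ref{trieuler}. There is therefore no ``paper's own proof'' against which to compare your proposal.

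As a side remark on the content of your sketch: the iterative absorption outline is broadly correct, but your attribution of the constant $(7+\sqrt{21})/14$ is slightly off. The Barber--K\"uhn--Lo--Osthus framework reduces the integral triangle decomposition threshold to the \emph{fractional} triangle decomposition threshold; the specific constant is the bound Delcourt and Postle obtain for the fractional problem, not a threshold arising from the existence of absorber gadgets per se. The absorber construction itself works at any density above the fractional threshold.
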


The value of $(7+\sqrt{21})/14$ is about $0.827327 > 4/5$, and triangle-divisible simple graphs $G$ with $\delta(G) \ge n/2$ are eulerian, so applying Corollary \ref{undirdensemain} gives the following.

\begin{corollary}\label{trieuler}
For every $\ep > 0$ there is $N'(\ep)$ with the following properties.  Every $n$-vertex triangle-divisible simple graph $G$ with $n \ge N'(\ep)$ and $\delta(G) \ge ((7+\sqrt{21})/14 + \ep) n$ has a triangular decomposition $\cC$.  For every such decomposition $\cC$ there is a $2$-face-colorable orientable embedding where all faces of one color are bounded by the elements of $\cC$ and there are exactly one or two faces of the other color, depending on whether $|V(G)|$ is odd or even, respectively. This embedding has maximum genus among all orientable embeddings in which all elements of $\cC$ are faces.
\end{corollary}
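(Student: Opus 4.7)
The plan is to combine the two results cited immediately before the corollary: Theorem \ref{nwbest} of Delcourt and Postle, which supplies a triangular decomposition, and Corollary \ref{undirdensemain}, which supplies the embedding. The only real work is verifying that the hypotheses of Corollary \ref{undirdensemain} are met and then reading off the parity of the number of new faces.

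First I would choose the threshold. Set $N'(\ep) = \max\{N(\ep),\, N_0(\ep)\}$, where $N(\ep)$ is the constant from Theorem \ref{nwbest} and $N_0(\ep)$ is chosen large enough that
\[
\bigl((7+\sqrt{21})/14 + \ep\bigr)n \;\ge\; (4n+2)/5
\]
for all $n \ge N_0(\ep)$; this is possible because $(7+\sqrt{21})/14 > 4/5$, so the gap absorbs the additive constant $2/5$ once $n$ is sufficiently large. With this choice of $N'(\ep)$, any $G$ satisfying the hypotheses of the corollary satisfies the hypotheses of Theorem \ref{nwbest}, so a triangular decomposition $\cC$ exists.

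Now fix any triangular decomposition $\cC$ of $G$. I would verify that $G$ is eulerian: triangle-divisibility forces every vertex degree to be even, and the density bound $\delta(G) \ge ((7+\sqrt{21})/14 + \ep)n > n/2$ forces $G$ to be connected. Since also $\delta(\sg{G}) = \delta(G) \ge (4n+2)/5$, Corollary \ref{undirdensemain} applies to the pair $(G,\cC)$, yielding a $2$-face-colorable orientable embedding in which the elements of $\cC$ are one color class, in which the other color class has exactly one or two faces according to the parity of $|V(G)|+|E(G)|+|\cC|$, and which has maximum genus among all orientable embeddings of $G$ in which the elements of $\cC$ are faces.

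Finally I would reconcile the parity statements. Because $\cC$ is a triangular decomposition, $|E(G)| = 3|\cC|$, so
\[
|V(G)|+|E(G)|+|\cC| \;=\; |V(G)| + 4|\cC|,
\]
which has the same parity as $|V(G)|$. Hence the number of new faces is one when $|V(G)|$ is odd and two when $|V(G)|$ is even, exactly as claimed. There is no genuine obstacle here; the only minor point to be careful about is that the parity condition inherited from Corollary \ref{undirdensemain} really does collapse to the parity of $|V(G)|$ alone, which is why the statement is phrased in terms of $|V(G)|$ rather than $|V(G)|+|E(G)|+|\cC|$.
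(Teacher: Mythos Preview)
Your proposal is correct and follows essentially the same approach as the paper: combine Theorem \ref{nwbest} with Corollary \ref{undirdensemain}, using that $(7+\sqrt{21})/14 > 4/5$ to meet the density hypothesis and that triangle-divisibility plus $\delta(G) > n/2$ gives an eulerian graph. Your explicit parity computation $|V(G)|+|E(G)|+|\cC| = |V(G)|+4|\cC|$ is exactly the justification the paper leaves implicit.
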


This may be regarded as a strengthening of the orientable part of Theorem \ref{steiner}, since a complete graph has a triangular decomposition, corresponding to a Steiner triple system, if and only if it is triangle-divisible.

Corollary \ref{trieuler} also tells us that a sufficiently large sufficiently dense triangle-divisible simple graph $G$ always has an edge-outer embedding that is optimal in three respects: it has an outer face of minimum possible length (the euler circuit face), the maximum possible number of faces other than the outer face (the triangular faces), and hence it attains the minimum genus numerically possible for an edge-outer embedding of a simple graph with $|V(G)|$ vertices and $|E(G)|$ edges.

Future strengthenings of Theorem \ref{nwbest} may also provide improvements to Corollary \ref{trieuler}.

\subsection{Graphs and digraphs with one or two vertices}

Since $\usg{D}$ is simple, the degree condition in Theorem \ref{densemain} gives $n-1 \ge \delta(\usg{D}) \ge (4n+2)/5$, which requires that $n \ge 7$.
However, our techniques can also give results covering all eulerian graphs and digraphs with just one or two vertices (but arbitrarily many edges or arcs).

An \emph{edge-cut} $S$ in a digraph $D$ is the set of all arcs with one end $U$ and other end in $\overline U = V(D)-U$ for some proper nonempty subset $U$ of $V(D)$, and a \emph{$k$-edge-cut} is an edge-cut of cardinality $k$.  In an eulerian digraph $D$ every edge-cut has even cardinality.  Recall that for an integer $a$, $a \bmod 2$ is equal to $0$ if $a$ is even and $1$ if $a$ is odd.

\begin{proposition}\label{small}
Let $D$ be an eulerian digraph with $|V(D)| \le 2$ and let $\cC$ be a directed circuit decomposition of $D$.
\begin{enumerate}[(a), nosep]
\item
If $|V(D)|=1$, or if $|V(D)|=2$ and $D$ has no $2$-edge-cut, then there is an oriented directed embedding of $D$ with the elements of $\cC$ as the profaces and with exactly one or two antifaces, depending on whether $|V(D)|+|A(D)|+|\cC|$ is odd or even, respectively.

\item
Suppose that $|V(D)|=2$ and $D$ has a $2$-edge-cut.
Let $V(D)=\{v_1,v_2\}$, for each $i \in \{1,2\}$ let $\al_i$ be the number of directed loops incident with $v_i$ in $D$, and let $\ga_i$ be the number of elements of $\cC$ that are incident only with $v_i$, not with $v_{3-i}$.  Then there is an oriented directed embedding of $D$ with the elements of $\cC$ as the profaces and with exactly $((\al_1+\ga_1) \bmod 2) + ((\al_2+\ga_2) \bmod 2) + 1$ antifaces.
\end{enumerate}
In both cases the embedding described has maximum genus among all orientable directed embeddings of $D$ in which all elements of $\cC$ are faces.
\end{proposition}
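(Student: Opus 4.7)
Both parts begin with Lemma \ref{dcdembedding} to obtain an oriented directed embedding $\Phi_0$ of $D$ whose profaces are exactly $\cC$, and the goal is then to minimize the number $k_a$ of antifaces so that the maximum-genus conclusion follows from Euler's formula. By Observation \ref{diremb-relcd}(b), $k_a$ automatically has the parity of $|V(D)|+|A(D)|+|\cC|$, matching the parities of the targets $1$ or $2$ in part (a) and of the formula in part (b).

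For part (a), the plan is to iteratively reduce $k_a$ by two using a directed Three Face Lemma from Section \ref{SupLemmas}: whenever three antifaces have corners at a common vertex, they can be merged into a single antiface, leaving every other face (in particular every proface) untouched. When $|V(D)|=1$ every antiface is incident with the unique vertex, so three antifaces may always be chosen and the iteration terminates at $k_a\in\{1,2\}$. When $|V(D)|=2$ with no $2$-edge-cut, at least four non-loop arcs between $v_1$ and $v_2$ force any antiface using a non-loop arc to meet both vertices; I will argue that while $k_a>2$ one can always locate a triple of antifaces sharing a common vertex (possibly after a preparatory transformation on two crossing antifaces), so the same three-face reduction brings $k_a$ down to the parity-dictated minimum.

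For part (b), the eulerian condition forces the $2$-edge-cut to consist of exactly one arc $e_{12}$ and one arc $e_{21}$, and since any circuit using one must use the other in order to close, there is a unique crossing circuit $C_0\in\cC$ while the remaining $\ga_1+\ga_2$ circuits of $\cC$ are loop-only at $v_1$ or $v_2$. I construct two one-vertex auxiliary digraphs $D_i$ by replacing the far side of the cut with a virtual loop $\ell^*_i$ at $v_i$; then $D_i$ has $\al_i+1$ loops and inherits a directed circuit decomposition $\cC_i$ of size $\ga_i+1$ in which the $v_i$-portion of $C_0$ is merged with $\ell^*_i$ into a single circuit. Any oriented directed embedding of $D$ with $\cC$ as profaces restricts at $v_i$, via $\ell^*_i$, to an oriented directed embedding of $D_i$ with $\cC_i$ as profaces and with exactly $1+n_i$ antifaces, where $n_i$ is the number of antifaces of $D$ confined to $v_i$ and the single extra antiface of $D_i$ absorbs the $v_i$-portion of the unique crossing antiface $A_0$ of $D$. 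Applying Observation \ref{diremb-relcd}(b) to $D_i$ then gives $n_i\ge(\al_i+\ga_i)\bmod 2$, and since $e_{12}$ and $e_{21}$ must lie on a common closing antiface of $D$ there is at least one crossing antiface; summing yields the claimed lower bound. For the matching upper bound I apply the one-vertex case of part (a) separately to each $D_i$ to obtain an optimal rotation at $v_i$, then reassemble the two rotations by re-expanding each $\ell^*_i$ back into the original cut half-arcs at $v_i$; the two merged antifaces of $D_1$ and $D_2$ recombine into a single crossing antiface of $D$, realizing the claimed minimum value of $k_a$.

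The main obstacle will be the gluing in part (b): I must check that the two independently chosen optimal rotations for $D_1$ and $D_2$ compose into a valid oriented directed embedding of $D$ whose profaces are exactly $\cC$, and that the single crossing antiface genuinely reassembles from the two half-antifaces of $D_1$ and $D_2$ with no extra splitting or merging. A subsidiary difficulty is ensuring, in part (a) for $|V(D)|=2$, that a three-face merge can always be set up when $k_a>2$, especially in the configuration consisting of one crossing antiface together with a single loop-only antiface at each of $v_1$ and $v_2$, where the three antifaces initially share no common vertex.
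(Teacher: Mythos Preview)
Your overall strategy matches the paper's proof closely: start from Lemma~\ref{dcdembedding}, reduce antifaces via the Three Face Lemma, and for part~(b) split along the $2$-edge-cut into two one-vertex digraphs $D_1,D_2$, solve each by part~(a), and reassemble. The paper in fact only outlines part~(b), asserting a bijection between oriented directed embeddings of $D$ and ordered pairs of embeddings of $D_1,D_2$; your more explicit version with the virtual loop $\ell^*_i$ and the counts $|A(D_i)|=\al_i+1$, $|\cC_i|=\ga_i+1$ is exactly how one would fill in that outline, and your lower-bound argument via Observation~\ref{diremb-relcd}(b) applied to each $D_i$ is correct.

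The one real gap is in part~(a) for $|V(D)|=2$, precisely at the configuration you flag as the subsidiary difficulty: one crossing antiface $A$ and one loop-only antiface at each of $v_1,v_2$. Your phrase ``preparatory transformation on two crossing antifaces'' cannot apply here, since there is only \emph{one} crossing antiface. The paper's resolution is to observe that $A$ carries all $\ge 4$ non-loop arcs, so $v_1$ and $v_2$ are \emph{interlaced} on $A$; then the Interlaced Faces Lemma (Lemma~\ref{interlacing}, which is Three Face Lemma part~(b) at $v_1$ followed by part~(a) at $v_2$) merges all three antifaces into one. Once you supply this interlacing observation, your plan goes through and coincides with the paper's argument.
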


Proposition \ref{small} is proved below, after Lemma \ref{interlacing}.  Immediate corollaries of Proposition \ref{small} can also be stated for bi-eulerian embeddings, partial circuit decompositions (collections of arc-disjoint circuits), and undirected graphs.  We leave the formulation of these to the reader.

\section{Supporting lemmas}\label{SupLemmas}

The proof of our main theorem, Theorem \ref{densemain}, depends on the lemmas of this section.  These lemmas facilitate combining antifaces in an oriented directed embedding without changing the profaces. While we developed these lemmas to prove Theorem \ref{densemain}, they are also of generic utility for manipulating embedded (di)graphs.

Our proof of Theorem \ref{densemain} was inspired by the approach used to prove Theorem \ref{tournament} in Bonnington et al.~\cite[Section 3]{BCMM02} .  However, since we treat general dense graphs instead of the special case of tournaments, we necessarily develop a number of new ideas. Our Three Face Lemma (\ref{3face}) and Interlaced Faces Lemma (\ref{interlacing}) simplify and generalize some arguments from \cite[Lemma 3.1 and proof of Lemma 3.4]{BCMM02}.
Our Three Neighbor Lemma (\ref{3adj}) generalizes \cite[Lemma 3.3]{BCMM02} and our Diamond Lemma (\ref{diamond}) summarizes and generalizes some reasoning from \cite[proof of Theorem 3.1]{BCMM02}.

Since we address digraphs here that are not orientations of complete graphs or even simple graphs, we need new sufficient conditions for when the results above can be applied. We provide these in the Three Neighbor Corollary (\ref{3adjcor}), Big and Moderate Faces Corollary (\ref{BMFlemma}), and Diamond Corollary (\ref{diamondcor}).  We also introduce a number of new ideas in the Bipartite Degeneracy Lemma (\ref{bipdegen}), Division Lemma (\ref{division}), Blow Up Lemma (\ref{blowup}), and how we use touch graphs in Section \ref{MainProof} for the proof of the main result.  Our more general setting also requires a significantly more complicated analysis.  Moreover, since our approach only manipulates antifaces, and does not depend on the profaces, it can be applied to situations where the profaces form an arbitrary directed circuit decomposition, not just a directed euler circuit.

\subsection{General lemmas for merging antifaces}

We begin with some general results.
The Three Face Lemma, Lemma \ref{3face} below, is our main tool for merging or rearranging faces.  We follow it with a useful consequence, Lemma \ref{interlacing}.

The Three Face Lemma (\ref{3face}) describes what happens if we modify the rotation at a vertex $v$ by breaking it in three places and reassembling it.  The places where we break it correspond to faces $A$, $B$, and $C$.
The lemma covers the cases where the three faces are distinct, or where $B$ is distinct from $A=C$.
(We can also investigate what happens when $A=B=C$, although we do not need this for our results.  There are two situations: one is the inverse of part (a), where one antiface splits into three antifaces, and in the other situation the antiface has its boundary rearranged but remains a single antiface.)

\begin{lemma}[Three Face Lemma]\label{3face}
Let $\Phi$ be an oriented directed embedding of an eulerian digraph $D$, and $v \in V(D)$.

\smallskip
(a) Suppose $A$, $B$, and $C$ are distinct antifaces that each contain $v$.  Then there is an oriented directed embedding $\Phi'$ of $D$ that has the same profaces and antifaces as $\Phi$ except that $A$, $B$ and $C$ are merged into a single antiface.

\smallskip
(b) Suppose $A$ and $B$ are distinct antifaces that each contain $v$.  Suppose further that $A$ can be written as $A = A_1 \mw A_2$ where $A_1$ and $A_2$ are nontrivial $vv$-walks.  Also consider $B$ as a $vv$-walk.  Then there is an oriented directed embedding $\Phi'$ of $D$ that has the same profaces and antifaces as $\Phi$, except that $A$ and $B$ are replaced by two new antifaces $A'= A_i \mw B$ and $B' = A_{3-i}$ for some $i \in \{1,2\}$.
\end{lemma}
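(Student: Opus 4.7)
The plan is to prove both parts via a single kind of local surgery at $v$: leave every rotation except the one at $v$ unchanged, and reorganize the cyclic rotation at $v$ by swapping two of three consecutive blocks. By Observation \ref{diremb-basic-s}\ref{db-alt-s} the half-arcs at $v$ alternate in direction around $v$, and with a suitable choice of orientation (as in the proof of Lemma \ref{dcdembedding}) every proface arriving on an incoming half-arc $h$ leaves on the outgoing half-arc immediately preceding $h$ in the clockwise rotation, while every antiface arriving on $h$ leaves on the outgoing half-arc immediately following $h$. Hence breaking the cyclic rotation at the antiface ``turn points'' partitions it into segments that each begin with an outgoing and end with an incoming half-arc, and every proface turn at $v$ lies strictly inside some such segment.

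For part (a), I would pick one visit of each of $A$, $B$, $C$ to $v$, producing three distinct antiface turns $h_A \to g_A$, $h_B \to g_B$, $h_C \to g_C$ at three boundary positions in the rotation. These partition the cyclic rotation into three nonempty segments, which up to cyclic relabeling I take to be $X = (g_A, \ldots, h_B)$, $Y = (g_B, \ldots, h_C)$, $Z = (g_C, \ldots, h_A)$. Define $\Phi'$ by replacing the rotation at $v$ with the cyclic sequence $(X, Z, Y)$; each segment still begins outgoing and ends incoming, so the alternation condition is preserved and $\Phi'$ is again an oriented directed embedding.

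All half-arc adjacencies internal to $X$, $Y$, $Z$ are unchanged, so every proface turn at $v$, and every antiface turn at $v$ not at a boundary, is preserved; in particular every face of $\Phi$ other than possibly $A$, $B$, $C$ is still a face of $\Phi'$. The three boundary turns change to $h_A \to g_B$, $h_B \to g_C$, $h_C \to g_A$. Since $\Phi$ and $\Phi'$ agree away from $v$ and at the other visits of $A$, $B$, $C$ to $v$, tracing the new antiface from $g_A$ gives: follow $A$ back to $h_A$, jump to $g_B$, follow $B$ to $h_B$, jump to $g_C$, follow $C$ to $h_C$, and close via $h_C \to g_A$. So $A$, $B$, $C$ merge into one antiface, proving (a).

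For part (b), view $A$ as a closed walk starting and ending at $v$ at the start of $A_1$, so that $A_1$ leaves $v$ on some $g_\alpha$ and returns on some $h_\beta$, while $A_2$ leaves on $g_\beta$ and returns on $h_\alpha$. This gives $A$ two antiface turns $h_\alpha \to g_\alpha$ and $h_\beta \to g_\beta$ at $v$, distinct since $A_1, A_2$ are nontrivial; let $h_\gamma \to g_\gamma$ be an antiface turn of $B$ at a chosen visit, distinct from the other two because $B \ne A$. Apply exactly the same segment-swap construction as in part (a). A direct case analysis of the two possible cyclic orders of the positions $\alpha, \beta, \gamma$ shows that the tracing produces the two new antifaces $A_1 \mw B$ and $A_2$ (cyclic order $\alpha, \beta, \gamma$) or $A_2 \mw B$ and $A_1$ (cyclic order $\alpha, \gamma, \beta$), corresponding to $i = 1$ or $i = 2$ respectively. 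The main bookkeeping obstacle will be to verify cleanly that the segment swap preserves every unaffected face and rearranges only the three chosen boundary turns; once this is established, the face-trace computation is direct.
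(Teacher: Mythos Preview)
Your proof is correct and takes essentially the same approach as the paper: both modify only the rotation at $v$ by breaking it at three antiface ``turn points'' and swapping two of the three resulting segments, then verify the effect by face-tracing. The only cosmetic differences are notational (you use $h$ for incoming and $g$ for outgoing half-arcs, opposite to the paper) and that in part (b) the paper eliminates your two-case analysis by relabeling $A_1$ and $A_2$ up front so that the break points occur in a fixed cyclic order.
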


\begin{figure}[ht!]
  \centering
  \begin{subfigure}{\textwidth}
    \hbox to\hsize{\hfil\includegraphics[scale=1.2]{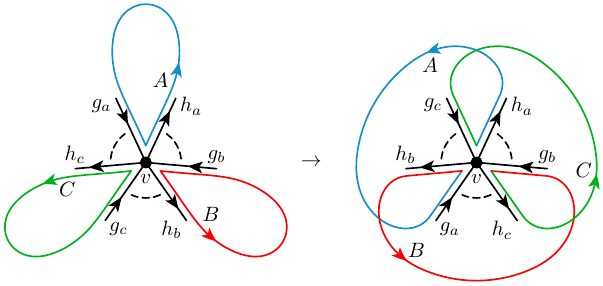}\hfil}
    \subcaption{Three distinct initial antifaces.}\label{i1a}
  \end{subfigure}
  \bigskip
  \begin{subfigure}{\textwidth}
    \hbox to \hsize{\hfil\includegraphics[scale=1.2]{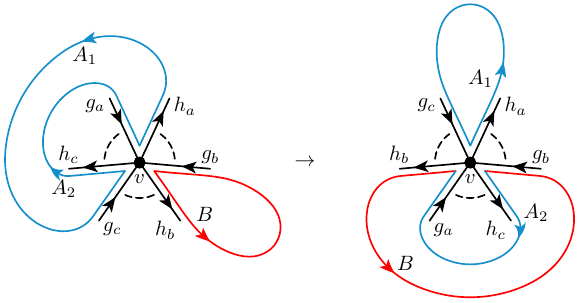}\hfil}
    \subcaption{Two initial antifaces.}\label{i1b}
  \end{subfigure}
  \caption{Cases for the Three Face Lemma.}
\label{fig-3face}
\end{figure}

\begin{proof}
Suppose the clockwise rotation at $v$ in $\Phi$ is $(g_0 h_0 g_1 h_1 \dots g_{d-1} h_{d-1})$ where $A^-(v) = \{g_i \;|\; i \in \mZ_d\}$ and $A^+(v) = \{h_i \;|\; i \in \mZ_d\}$.
Given $a, b, c$ in (increasing) cyclic order in $\mZ_d = \{0, 1, \dots, d-1\}$, let $\Phi'$ be the oriented directed embedding obtained from $\Phi$ by changing the rotation at $v$ from $$(
	h_a g_{a+1} h_{a+1} \dots g_b \;\;
	h_b g_{b+1} h_{b+1} \dots g_c \;\;
	h_c g_{c+1} h_{c+1} \dots g_a  )$$
to $$(
	h_a g_{a+1} h_{a+1} \dots g_b \;\;
	h_c g_{c+1} h_{c+1} \dots g_a \;\;
	h_b g_{b+1} h_{b+1} \dots g_c ).$$

\smallskip
(a) We may assume that our three distinct faces are represented as $vv$-walks with the first outgoing half-arc at $v$ of $A$ denoted by $h_a$, of $B$ by $h_b$, and of $C$ by $h_c$.  Thus, $A = v h_a \dots g_a v$, $B = v h_b \dots g_b v$, and $C = v h_c \dots g_c v$. Construct $\Phi'$ as above. Faces other than $A, B, C$ are unchanged, and by tracing the antiface starting $v h_a \dots$ we see that $A, B, C$ are merged into a single antiface $A \mw B \mw C$.
See Figure \ref{fig-3face}\subref{i1a}.

\smallskip
(b) We may assume (by swapping $A_1$ and $A_2$ if necessary)
that $A_1 = v h_a \dots g_c v$, $A_2 = v h_c \dots g_a v$, and
$B = v h_b \dots g_b v$.
Construct $\Phi'$ as above. Faces other than $A$ and $B$ are unchanged, and by tracing faces we see that $A$ and $B$ are replaced by new antifaces
$v h_c \dots g_a v h_b \dots g_b v = A_2 \mw B$ and $v h_a \dots g_c v = A_1$.
See Figure \ref{fig-3face}\subref{i1b}.
\end{proof}

 In part (b) of the statement of Lemma \ref{3face}, $A_i$ is whichever of $A_1$ or $A_2$ the proof relabels as $A_2$, which is the one that has its initial and final half-arcs further from $g_b$ and $h_b$ in the rotation around $v$.  In general we cannot choose the value of $i$.

There is an undirected version of Lemma \ref{3face}(a), which allows us to combine three distinct faces incident with the same vertex into a single face by adding a handle to an orientable embedding of a graph.
Ellingham and Weaver \cite{EW08} explain how this idea can be regarded as a special case of an operation that merges two pairs of faces incident with a given vertex.
Similar ideas and special cases involving local modification of an embedding around a single vertex or single face have been used for a long time, for example by Ringel \cite[p.~120]{Ri61} in one of the papers proving the Map Color Theorem, by Duke \cite[Theorem 3.2]{Duk66} in showing that the orientable genus range of a graph is an interval, and by Xuong \cite[Figure 4]{Xuo79b} in finding maximum genus embeddings.

The operation in Lemma \ref{3face} does not change the profaces of the embedding at all.  In our arguments the profaces will be specified in advance and never modified.
However, if desired, by switching the roles of profaces and antifaces, we could also apply Lemma \ref{3face} to merge or rearrange profaces without changing antifaces.

We say distinct vertices $x, y$ are \emph{interlaced} on a closed walk (or face) $W$ if (possibly after cyclic shifting) $W$ can be written as $x \dots y \dots x \dots y \dots$. 
Many of the following lemmas give conditions that assure the existence of, or allow us to create, at least one pair of interlaced vertices.  We eventually use interlaced pairs of vertices to merge antifaces, thus helping us reach the ultimate goal of only one or two antifaces.

\begin{lemma}[Interlaced Faces Lemma]
\label{interlacing}
Let $\Phi$ be an oriented directed embedding of an eulerian digraph $D$, and $x, y \in V(D)$.  Suppose that there are three distinct antifaces $A, B, C$ such that
$x$ and $y$ are interlaced on $A$,
while $x$ occurs on $B$ and $y$ occurs on $C$.
Then there is an oriented directed embedding $\Phi'$ of $D$ that has the same profaces and antifaces as $\Phi$ except that $A$, $B$ and $C$ are replaced by a single antiface.
\end{lemma}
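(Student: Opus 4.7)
The plan is to apply the Three Face Lemma (Lemma \ref{3face}) twice: once at $x$ to rearrange $A$ and $B$ without losing access to $y$, and then once at $y$ to merge the resulting three antifaces.

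First I would use the interlacing of $x$ and $y$ on $A$. Because $A$ can be written cyclically as $x \dots y \dots x \dots y \dots$, we may split $A$ at its two occurrences of $x$ as $A = A_1 \mw A_2$, where $A_1$ and $A_2$ are nontrivial $xx$-walks and each $A_j$ contains at least one occurrence of $y$. Now apply Lemma \ref{3face}(b) at the vertex $x$ to the two distinct antifaces $A$ and $B$ (both containing $x$). This produces an oriented directed embedding $\Phi_1$ with the same profaces as $\Phi$ and the same antifaces as $\Phi$ except that $A$ and $B$ are replaced by $A' = A_i \mw B$ and $B' = A_{3-i}$ for some $i \in \{1,2\}$. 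Crucially, since both $A_1$ and $A_2$ contain $y$, each of $A'$ and $B'$ contains $y$, regardless of the value of $i$ produced by the lemma.

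Next, observe that $C$ is untouched by this modification, so $C$ is still an antiface of $\Phi_1$ containing $y$, and $A', B', C$ are three pairwise distinct antifaces of $\Phi_1$, all incident with $y$. Apply Lemma \ref{3face}(a) at the vertex $y$ to these three antifaces. This produces an oriented directed embedding $\Phi'$ with the same profaces as $\Phi_1$ (hence as $\Phi$) and the same antifaces except that $A', B', C$ are merged into a single antiface. Composing the two modifications, the net change relative to $\Phi$ is exactly that $A$, $B$, $C$ are replaced by one new antiface, as required.

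The only nontrivial check is ensuring that after the first application we still have three distinct antifaces each containing $y$; this is precisely what the interlacing hypothesis buys us, since it forces both $xx$-halves of $A$ to meet $y$. The remaining bookkeeping — distinctness of $A', B', C$ and preservation of the profaces — is handed to us directly by Lemma \ref{3face}.
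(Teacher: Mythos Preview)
Your argument is correct and follows exactly the paper's approach: split $A$ at $x$ into two $xx$-walks each containing $y$, apply Lemma \ref{3face}(b) at $x$ to $A$ and $B$, then apply Lemma \ref{3face}(a) at $y$ to the resulting three antifaces. The only minor imprecision is the phrase ``its two occurrences of $x$'' (there may be more than two), but the interlacing hypothesis guarantees a choice of two such occurrences, which is what you use.
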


\begin{proof}
Write $A = A_1 \mw A_2$ where $A_1$ and $A_2$ are nontrivial walks of the form $x \dots y \dots x$.  By Lemma \ref{3face}(b) we may modify $\Phi$ so as to replace $A$ and $B$ by two antifaces $A_i \mw B$ and $A_{3-i}$.  Now $A_i \mw B$, $A_{3-i}$, and $C$ are three distinct antifaces at $y$, so by Lemma \ref{3face}(a) we may combine them into a single antiface.
\end{proof}

Now we consider the situation of Theorem \ref{densemain} or Proposition \ref{small}.  We suppose $D$ is an $n$-vertex eulerian digraph, and $\cC$ is a directed circuit decomposition of $D$.
By Lemma \ref{dcdembedding} there exists an oriented directed embedding $\Phi$ of $D$ in which the profaces are exactly the elements of $\cC$.
Let $\cA$ be the set of antifaces of $\Phi$.
Our overall goal now is to show that we can decrease $|\cA|$ to $1$ or $2$ without changing the profaces.
If there is a vertex that belongs to three antifaces, we will apply Lemma \ref{3face}(a) to reduce the number of antifaces.  Otherwise, each vertex belongs to at most two antifaces, and we say the embedding is \emph{locally irreducible}.
In our arguments the profaces of the embedding never change.

At this point we have developed the tools necessary to prove Proposition \ref{small}, which covers graphs with one or two vertices.

\begin{proof}[Proof of Proposition \ref{small}]
(a) By Lemma \ref{dcdembedding} there is an oriented directed embedding $\Phi_0$ of $D$ with $\cC$ as the set of profaces.  By repeatedly applying the Three Face Lemma (\ref{3face}) beginning with $\Phi_0$, we can obtain a locally irreducible embedding $\Phi_1$ with $\cC$ as the set of profaces.  We show that we can obtain $\Phi_2$ with at most two antifaces and with $\cC$ as the set of profaces.

If $\Phi_1$ has at most two antifaces, which holds if $|V(D)|=1$, then we can take $\Phi_2 = \Phi_1$.  So we may suppose that $V(D) = \{v_1, v_2\}$ and $\Phi_1$ has at least three antifaces.  Some antiface $A$ must appear at both $v_1$ and $v_2$.  Let $B$ and $C$ be two other antifaces.
If there is a fourth antiface, or if either of $B$ or $C$ is incident with both $v_1$ and $v_2$, or if $B$ and $C$ are incident with the same vertex $v_i$, then $\Phi_1$ is not locally irreducible.  So there are only three antifaces $A, B, C$, and we may assume without loss of generality that $B$ is only incident with $v_1$ and $C$ is only incident with $v_2$.  Thus, $A$ uses all arcs between $v_1$ and $v_2$, and since there are at least four such arcs, $v_1$ and $v_2$ are interlaced on $A$.  We can therefore apply the Interlaced Faces Lemma (\ref{interlacing}) to obtain $\Phi_2$ with exactly one antiface.

The precise number of antifaces in $\Phi_2$ follows from Observation \ref{diremb-relcd}(b).  The maximum genus conclusion follows because the number of antifaces is minimum.

\smallskip
\noindent (b) For this case we just provide an outline of the proof, as the details are straightforward but tedious to give in full.  A $2$-vertex eulerian digraph $D$ with a $2$-edge-cut can be decomposed into two $1$-vertex eulerian digraphs $D_1$ and $D_2$ using a \emph{$2$-edge-cut reduction.}  There is a bijection between oriented directed embeddings $\Phi$ of $D$ and ordered pairs $(\Phi_1, \Phi_2)$ of oriented directed embeddings of $D_1$ and $D_2$.  (This generalizes \cite[Observation 3.3]{EE-M2mod4}.)  Applying this idea and part (a) gives the result.
\end{proof}

For Theorem \ref{densemain} we try to create situations in locally irreducible embeddings where we have interlacing, so that we can use the Interlaced Faces Lemma (\ref{interlacing}).
We wish to concisely describe sets of vertices that belong to specific antifaces.
For distinct antifaces $A$ and $B$ we define
$AB = V(A) \cap V(B)$ and
$A\ov B = V(A) - V(B)$.
For $i \in \{0,1\}$ we let $Ai$ denote the set of vertices that belong to $A$ and exactly $i$ other antifaces.
If $u \in AB$ or $u \in \only A$ we also say $u$ is of \emph{type} $AB$ or $\only A$, respectively.
If $u \in A\ov B$ or $u \in \also A$ we say $u$ is of \emph{general type} $A\ov B$ or $\also A$, respectively.
If the embedding is locally irreducible,
then $V(A) = \only A \cup \also A$,
$\also A = \bigcup_{P \in \cA, P \ne A} AP$,
and each vertex $u$ has a unique type which specifies exactly which antifaces $u$ belongs to.
In that case, for $u \in AB$ and $v \in AC$ where $A, B, C$ are distinct antifaces, every arc of $D$ between $u$ and $v$ must belong to $A$.

We have two results that allow us to find interlacing in locally irreducible embeddings.
For both proofs, note that two vertices are interlaced on $W$ if and only if they are interlaced on the walk $\usg{W}$.

\begin{lemma}[Three Neighbor Lemma]\label{3adj}
Assume we have a locally irreducible embedding with antiface $A$.  Suppose there is nonempty $S \subseteq \also A$ such that each $v \in S$ is adjacent to at least three other vertices of $S$ of types different from $v$.
Then there are vertices $x, y \in S$ of different types that are interlaced on $A$.
\end{lemma}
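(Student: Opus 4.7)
The plan is to argue by contradiction: suppose no two vertices of $S$ of different types are interlaced on $A$, and aim for a contradiction with the 3-neighbor hypothesis.

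First I would show every $v \in S$ has at least two occurrences on the cyclic walk $\usg{A}$. Since each vertex of $S \subseteq \also{A}$ sits on exactly one antiface besides $A$, two different-type vertices $v,w \in S$ share only the antiface $A$, so every arc of $D$ between $v$ and $w$ lies in $A$ and appears as an edge of $\usg{A}$. Hence the three distinct different-type neighbors of $v$ in $S$ give three distinct vertices adjacent to $v$ on $\usg{A}$. Since $\usg{A}$ has no consecutive vertex repetitions, each occurrence of $v$ on $\usg{A}$ contributes only two adjacent positions, so $v$ must occur at least $\lceil 3/2 \rceil = 2$ times.

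For each $v \in S$, let $P_v$ be its set of positions on $\usg{A}$, let $N$ denote the length of $\usg{A}$, and let $s_{\max}(v)$ be the size of the largest arc in the decomposition of the cyclic walk into arcs between consecutive $v$-occurrences. Define the \emph{hull} $H_v$ as the complement of the largest such arc; it is the shortest cyclic arc containing $P_v$, with $|H_v| = N - s_{\max}(v)$. I would pick $v^* \in S$ with $|H_{v^*}|$ minimum and then show that every different-type neighbor $w$ of $v^*$ in $S$ has $P_w$ lying entirely outside $H_{v^*}$. Non-interlacing forces $P_w$ into some single arc $J$ of $v^*$'s decomposition; if $J$ were not the largest arc, then $P_w \subseteq J$ would yield $|H_w| \leq |J|$, while $J$ and the largest arc of $v^*$ are distinct arcs, so $|J| + s_{\max}(v^*) \leq N - k_{v^*} \leq N - 2$, giving $|H_w| \leq |J| < N - s_{\max}(v^*) = |H_{v^*}|$ and contradicting minimality. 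Thus $J$ must be the largest arc, and $P_w$ lies outside $H_{v^*}$.

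With $P_w$ outside $H_{v^*}$, the adjacency of $w$ to $v^*$ on $\usg{A}$ forces a $w$-occurrence at one of the two positions just past the endpoints of $H_{v^*}$ into the largest-arc region. The three distinct different-type neighbors of $v^*$ demand three distinct such positions, but only two are available, giving the desired contradiction. The main obstacle I expect is establishing the strict inequality $|H_w| < |H_{v^*}|$ in the key step; the bound $|J| + s_{\max}(v^*) \leq N - k_{v^*} \leq N - 2 < N$ is exactly what supplies the needed strict gap, and the subtle edge case $k_{v^*} = 2$ (where the hull's closure coincides with the closure of the unique other arc) is handled correctly because $k_{v^*} \geq 2$ still forces strictness.
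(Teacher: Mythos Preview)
Your proof is correct, and the underlying idea matches the paper's: a minimality argument on the cyclic structure of $\usg{A}$ isolates a vertex of $S$ whose different-type neighbors in $S$ can occupy only two positions, contradicting the three-neighbor hypothesis. The implementations differ, though. The paper minimizes, over all $x \in S$, the length of an interval on $\usg{A}$ between two $x$-occurrences that contains some different-type $y \in S$; if $y$ is not interlaced with $x$, the sub-interval between the first and last $y$-occurrence would give a shorter candidate unless it contains no different-type $S$-vertex, whence $y$ has at most two different-type neighbors. You instead minimize the hull $|H_v|$ over $v \in S$ directly, then show every different-type neighbor of the minimizer $v^*$ must lie entirely in the complementary largest arc (else it would have a smaller hull), forcing each such neighbor to occupy one of the two positions adjacent to the hull's endpoints. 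Your one-step hull minimization avoids the paper's auxiliary ``outer'' vertex $x$ and the nested interval, giving a slightly cleaner bookkeeping; the paper's version, on the other hand, makes the interlacing conclusion in the non-contradictory branch explicit rather than leaving it wrapped in a contradiction.
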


\begin{proof}
The hypothesis means that $|S| \ge 4$.  
Two adjacent vertices of $S$ of different types must be joined by an arc of $A$, so the hypothesis also means that each vertex of $S$ is incident with at least three edges of $\usg{A}$ (the walk in $\usg{D}$ corresponding to $A$), and so occurs at least twice on $\usg{A}$.

Therefore, we may choose an interval (sequence of consecutive vertices) $I=v_0 v_1 \dots v_p$ along $\usg{A}$ such that $v_0$ and $v_p$ are occurrences of some vertex $x \in S$, some $v_i$ with $1 \le i \le p-1$ is a vertex $y \in S$ of type different from $x$, and $p$ is as small as possible over all choices of $I$, $x$, and $y$.

If $y$ appears on $\usg{A}$ outside of $I$, then $x$ and $y$ are interlaced on $\usg{A}$ and hence on $A$ as desired.  So suppose that all occurrences of $y$ on $\usg{A}$ are in the interval $I$.  Let $v_m$ and $v_n$ be the first and last occurrences of $y$, so that $1 \le m < n \le p-1$.  No vertex $v_i$ with $m+1 \le i \le n-1$ can be a vertex of $S$ with type different from $y$, or we could replace $x$ by $y$ and reduce $p$. 
But then $y$ is adjacent to at most two vertices in $S$ of type different from itself, namely $v_{m-1}$ and $v_{n+1}$, which contradicts our hypothesis.
\end{proof}

\begin{lemma}[Diamond Lemma]\label{diamond}
Assume we have a locally irreducible embedding with distinct antifaces $A, B, C$.  Suppose that we have three distinct type $AC$ vertices $t, u, v$ and one type $AB$ vertex $x$, such that $tu, uv \in E(\usg{A})$ and  $xt, xu, xv \in E(\usg{D})$.
Then $x$ is interlaced on $A$ with at least one of $t$, $u$ or $v$.
\end{lemma}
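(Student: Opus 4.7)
The plan is to argue by contradiction, working with the underlying closed walk $\usg{A}$ on the simple graph $\usg{D}$. First I would note that since $x$ has type $AB$ and $t, u, v$ each have type $AC$ in a locally irreducible embedding (with $B \ne C$), the paper's earlier observation tells us that every arc of $D$ between $x$ and any of $t, u, v$ must lie in the face $A$. Together with the hypotheses $tu, uv \in E(\usg{A})$, this gives $tu, uv, xt, xu, xv \in E(\usg{A})$. Since interlacing on $A$ is the same as interlacing on $\usg{A}$, I may work with $\usg{A}$ throughout.

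Suppose for contradiction that $x$ is not interlaced on $\usg{A}$ with any of $t, u, v$. First I would establish that $x$ appears at least twice on the closed walk $\usg{A}$: each occurrence of $x$ is incident to at most two edges of $\usg{A}$ (the ones used by the walk on either side), yet $x$ is incident to at least three distinct edges $xt, xu, xv$ of $\usg{A}$. So if $p_1, \ldots, p_k$ are the positions of $x$ on the cyclic walk $\usg{A}$, then $k \ge 2$, and these positions partition $\usg{A}$ into $k$ open ``gaps''. The non-interlacing assumption applied to each $y \in \{t, u, v\}$ then says that all occurrences of $y$ lie in a single gap, which I will denote $G(y)$.

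Next I would use the edges $tu, uv$ to force $t, u, v$ into a common gap. Since $t$ and $u$ sit in consecutive positions of $\usg{A}$ somewhere, and any two consecutive non-$x$ vertices must lie in the same gap (stepping from one gap to another requires passing through an occurrence of $x$), we have $G(t) = G(u)$, and similarly $G(u) = G(v)$. Hence $t, u, v$ all lie in the same gap $G_i$.

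The main obstacle, and the step that seals the contradiction, is the pigeonhole that follows. For each $y \in \{t, u, v\}$, the edge $xy$ being traversed by the walk means $y$ occupies a position of $\usg{A}$ adjacent to some $p_j$. But all occurrences of $y$ sit strictly inside $G_i$, so the only positions available to $y$ that are adjacent to $x$ are the position just after $p_i$ and the position just before $p_{i+1}$ --- at most two positions in total, each holding a single vertex. Three distinct vertices $t, u, v$ cannot all be accommodated in these two slots, so at least one of $xt, xu, xv$ fails to be an edge of $\usg{A}$, a contradiction.
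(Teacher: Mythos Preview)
Your proof is correct. Both your argument and the paper's hinge on the same observations: the edges $xt, xu, xv$ must lie in $\usg{A}$, and one should analyse the maximal $x$-free intervals (``gaps'') of the closed walk $\usg{A}$. The paper proceeds directly rather than by contradiction: it fixes the gap $I$ that begins with the edge $xu$, notes that at most one of $xt, xv$ can be the other boundary edge of $I$ so that (say) $t$ has an occurrence outside $I$, and then uses the edge $tu$ to force either $t$ or $u$ to appear both inside and outside $I$, yielding interlacing. Your version packages the same pigeonhole more symmetrically---assuming no interlacing confines each of $t,u,v$ to a single gap, the edges $tu, uv$ force it to be the \emph{same} gap, and then the three edges $xt, xu, xv$ overfill its two boundary slots. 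The two arguments are really the same idea viewed from opposite ends; yours is slightly cleaner in that it treats $t,u,v$ uniformly, while the paper's avoids the contradiction framing.
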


\begin{proof}
Since $x$ is of type $AB$ and $t$, $u$, and $v$ are of type $AC$ with $B \ne C$, the arcs of $D$ joining $x$ to $t$, $u$, and $v$ must belong to $A$, so $xt, xu, xv \in E(\usg{A})$.
Therefore, possibly after reversing $\usg{A}$, we may choose an interval $I= v_0 v_1 \dots v_p$ along $\usg{A}$, where $v_0=v_p=x$, $v_1=u$, and no $v_i$, $1 \le i \le p-1$, is equal to $x$.
At most one of $xt$ or $xv$ belongs to $I$, so we may assume without loss of generality that $xt$, and hence $t$, occurs on $\usg{A}$ outside of $I$.
If $tu$ is an edge of $I$ then $t$ and $x$ are interlaced on $\usg{A}$ and hence on $A$, and if $tu$ is not an edge of $I$ then $u$ and $x$ are interlaced on $\usg{A}$ and hence on $A$.
\end{proof}

\subsection{Lemmas using density}

Now we give some consequences of the above results that depend on various degree conditions.
To state these results we define $k$ to be the maximum degree of the complement of $\usg{D}$, or equivalently $k = n-1-\delta(\usg{D})$.
In other words, for each $v \in V(D)$ there are at most $k$ vertices different from $v$ that are not adjacent to $v$ in $D$.
The parameter $k$ will have this meaning for the rest of this subsection and in Section \ref{MainProof}.
Thus, if $S$ is a set of at least $k+1$ vertices, any vertex $v$ not in $S$ is adjacent to, and hence shares an antiface with each of, at least $|S|-k \ge 1$ elements of $S$.  Moreover, the induced subgraph $\usg{D}[S]$ has minimum degree at least $|S|-k-1$.

We will use the following observation frequently, often without explicit reference.

\begin{observation}\label{loopfacebig}
If $A \in \cA$ has $\only A \ne \emptyset$ then $|V(A)| \ge n-k$.
\end{observation}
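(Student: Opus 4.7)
The plan is to exploit the fact that a vertex in $\only A$ has every incident arc on $A$. First I would pick any $v \in \only A$. Because the embedding is locally irreducible and $v$ belongs to no antiface other than $A$, I argue that every arc incident with $v$ must have $A$ as its antiface side: the antiface incident on one side of such an arc necessarily contains $v$, and the only antiface containing $v$ is $A$.

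Next I would observe that this forces every neighbor of $v$ in $\usg{D}$ to appear on the boundary walk of $A$. Indeed, for each neighbor $u$ of $v$ there is at least one arc of $D$ between $u$ and $v$; by the previous step that arc lies on $A$, so $u \in V(A)$. Loops at $v$ contribute nothing new since they have both endpoints at $v$, but they also do not interfere with the argument. Thus $\{v\} \cup N_{\usg{D}}(v) \subseteq V(A)$.

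Finally I would count. Since $\usg{D}$ is simple, $v \notin N_{\usg{D}}(v)$, so these two sets are disjoint, and by the definition of $k$ we have $\deg_{\usg{D}}(v) \ge \delta(\usg{D}) = n - 1 - k$. Therefore $|V(A)| \ge 1 + (n-1-k) = n-k$, as required. There is no real obstacle to this proof; it is essentially a direct unpacking of the definitions of a directed embedding, local irreducibility, and $k$.
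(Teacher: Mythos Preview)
Your proof is correct and is essentially the same argument as the paper's, just with more detail: the paper simply says that if $v \in \only A$ then $v$ and all its neighbors lie in $A$, so at most $k$ vertices are outside $A$. Your unpacking of why every neighbor of $v$ must lie on $A$ (via the antiface side of each incident arc) is exactly the reasoning implicit in that one-line proof.
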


\begin{proof}
If $v \in \only{A}$ then $v$ and all its neighbors are in $A$.  Thus, at most $k$ vertices are not in $A$, i.e., $|V(A)| \ge n-k$.
\end{proof}

The next four results give conditions under which we can employ the Three Neighbor Lemma (\ref{3adj}) or the Diamond Lemma (\ref{diamond}) to find interlacing.

\begin{corollary}[Three Neighbor Corollary]\label{3adjcor}
Suppose we have a locally irreducible embedding with at least three antifaces.  Suppose that for some antiface $A$ we have $|\also A|-|AP| \ge k+3$ for all antifaces $P \ne A$.  Then there are vertices $x, y$ both of general type $\also A$ but of different types that are interlaced on $A$.
\end{corollary}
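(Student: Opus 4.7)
The plan is to apply the Three Neighbor Lemma (\ref{3adj}) to the set $S = \also A$, so most of the work is just verifying its hypothesis from the density assumption.

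First I would observe that the hypothesis forces $\also A$ to be nonempty: if $\also A$ were empty then for any antiface $P \ne A$ we would have $AP \subseteq \also A$ empty as well, and the inequality $|\also A| - |AP| \ge k+3$ would read $0 \ge k+3$, which is impossible. (Such a $P$ exists because there are at least three antifaces.) So $S = \also A \ne \emptyset$ and it suffices to check that every $v \in S$ is adjacent to at least three other vertices of $S$ whose types differ from that of $v$.

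Let $v \in \also A$, and let $AP$ be the (unique, by local irreducibility) type of $v$. The vertices of $\also A$ whose type differs from $AP$ are exactly those in $\also A \setminus AP$, a set of cardinality $|\also A| - |AP| \ge k+3$; note that each of these vertices is automatically distinct from $v$ since $v \in AP$. Because $k$ is the maximum degree of the complement of $\usg{D}$, at most $k$ of these $|\also A|-|AP|$ vertices fail to be adjacent to $v$ in $D$. Therefore $v$ is adjacent to at least
\[
|\also A| - |AP| - k \;\ge\; (k+3) - k \;=\; 3
\]
vertices of $\also A$ of type different from $v$'s, as required.

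The Three Neighbor Lemma (\ref{3adj}) now yields vertices $x, y \in S = \also A$ of different types that are interlaced on $A$, and these $x$ and $y$ are of general type $\also A$ by construction. Since the argument is a direct reduction to Lemma~\ref{3adj}, the only real checking is the counting step above; I do not anticipate any significant obstacle.
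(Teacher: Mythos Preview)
Your proof is correct and follows essentially the same approach as the paper: set $S = \also A$, verify the adjacency hypothesis of the Three Neighbor Lemma via the counting $|\also A|-|AP|-k \ge 3$, and apply that lemma. Your explicit verification that $S \ne \emptyset$ is a small extra detail the paper leaves implicit, but otherwise the arguments are the same.
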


\begin{proof}
Let $S = \also A$.
Consider an arbitrary $v \in S$, of type $AP$.  Since $|\also A|-|AP| \ge k+3$ there are at least $k+3$ vertices of type different from $v$ in $S$.  Since $v$ is nonadjacent to at most $k$ of these, $v$ is adjacent to at least three of them.  The result then follows from the Three Neighbor Lemma (\ref{3adj}).
\end{proof}

\begin{corollary}[Big and Moderate Faces Corollary]\label{BMFlemma}
Suppose we have a locally irreducible embedding with three distinct antifaces $A, B, C$ where $|V(A)| \ge n-k$ (for example, if $\only{A} \ne \emptyset$) and $|V(B)|, |V(C)| \ge 2k+3$.
Then there are a vertex of type $AB$ and a vertex of type $AC$ that are interlaced on $A$.
\end{corollary}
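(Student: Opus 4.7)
The plan is to reduce the statement to an application of the Three Neighbor Lemma (\ref{3adj}) with $S = AB \cup AC$, exploiting the hypothesis $|V(A)| \ge n-k$ to force $AB$ and $AC$ to be large.

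First I would establish that $|AB| \ge k+3$ and $|AC| \ge k+3$. Since $|V(A)| \ge n-k$, at most $k$ vertices of $D$ lie outside $V(A)$. Hence any vertex of $V(B)$ outside $V(A)$ is among these $k$ vertices, so $|V(B) \cap V(A)| \ge |V(B)| - k \ge (2k+3) - k = k+3$. Because the embedding is locally irreducible, a vertex lying in both $V(A)$ and $V(B)$ is of type $AB$, so $|AB| \ge k+3$; symmetrically $|AC| \ge k+3$. By local irreducibility the sets $AB$ and $AC$ are disjoint (a vertex of $AB$ lies in no antiface other than $A$ and $B$), and both are subsets of $\also A$.

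Next, set $S = AB \cup AC \subseteq \also A$, which is nonempty. I claim every $v \in S$ is adjacent (in $\usg D$) to at least three vertices of $S$ of type different from $v$. Indeed, if $v \in AB$, the vertices of $S$ of type different from $v$ are exactly those in $AC$, of which there are at least $k+3$. Since $v$ has at most $k$ non-neighbors in $V(D) \setminus \{v\}$, at most $k$ elements of $AC$ fail to be adjacent to $v$, leaving at least $(k+3) - k = 3$ neighbors of $v$ in $AC$. The argument for $v \in AC$ is symmetric.

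Applying the Three Neighbor Lemma (\ref{3adj}) to $S$ now yields two vertices $x, y \in S$ of different types that are interlaced on $A$. Since the only two types appearing in $S$ are $AB$ and $AC$, one of $x, y$ is of type $AB$ and the other of type $AC$, which is the desired conclusion. No step here looks delicate: the only thing to be a bit careful about is confirming that the adjacency bound of three survives after subtracting the $k$ possible non-neighbors, which is exactly why the hypothesis $|V(B)|, |V(C)| \ge 2k+3$ (giving the slack $k+3$) is the right threshold.
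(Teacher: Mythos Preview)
Your proof is correct and follows essentially the same approach as the paper: both derive $|AB|, |AC| \ge k+3$ from $|V(A)| \ge n-k$, then apply the Three Neighbor Lemma (\ref{3adj}) with $S = AB \cup AC$. Your version spells out a few details the paper leaves implicit (disjointness of $AB$ and $AC$ via local irreducibility, and the final remark that the two types in $S$ are exactly $AB$ and $AC$), but the argument is the same.
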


\begin{proof}
Since $|V(A)| \ge n-k$, at most $k$ vertices of $B$ are not also vertices of $A$, so $|AB| \ge k+3$.  Similarly, $|AC| \ge k+3$.  Each vertex of $AB$ is therefore adjacent to at least three vertices of $AC$, and vice versa, so we may apply the Three Neighbor Lemma (\ref{3adj}) with $S = AB \cup AC$.
\end{proof}

We will use the following general lemma to apply the Three Neighbor Lemma (\ref{3adj}) in one particular situation.
A graph is \emph{$d$-degenerate} if every subgraph has minimum degree at most $d$.

\begin{lemma}[Bipartite Degeneracy Lemma]\label{bipdegen}
Let $d$ be a nonnegative integer.
If a simple bipartite graph $G$ of order $n \ge 2d$ is $d$-degenerate then it has at most $d(n-d)$ edges.  Thus, a simple bipartite graph of order $n \ge 2d$ with more than $d(n-d)$ edges has a subgraph with minimum degree at least $d+1$.
\end{lemma}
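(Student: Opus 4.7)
The plan is to prove the edge-count bound by induction on $n$, starting from the base case $n = 2d$. Base: any simple bipartite graph on $2d$ vertices has at most $\lfloor (2d)^2/4 \rfloor = d^2 = d(n-d)$ edges, so the bound holds automatically (not even needing $d$-degeneracy yet). For the inductive step, assume the bound for simple bipartite $d$-degenerate graphs on $n-1$ vertices whenever $n - 1 \ge 2d$. Given $G$ on $n \ge 2d+1$ vertices, $d$-degeneracy supplies a vertex $v$ with $\deg_G(v) \le d$. The graph $G - v$ is still simple, bipartite, and $d$-degenerate (every subgraph of a subgraph is a subgraph), and it has $n-1 \ge 2d$ vertices, so it has at most $d(n-1-d)$ edges by the inductive hypothesis. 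Adding back the at most $d$ edges at $v$ yields at most $d(n-1-d) + d = d(n-d)$ edges in $G$, as required.

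Equivalently, one may view the argument as iteratively peeling vertices of degree $\le d$ down to a $2d$-vertex subgraph: each of the $n - 2d$ peelings removes at most $d$ edges, and what remains is bipartite on $2d$ vertices and so has at most $d^2$ edges, giving $d(n - 2d) + d^2 = d(n-d)$ in total. Bipartiteness is used only at the very end, to defeat the $\binom{d}{2}$ slack that an arbitrary (non-bipartite) $d$-degenerate graph on $2d$ vertices would be allowed.

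For the second sentence, I would simply take the contrapositive of the first. If no subgraph of $G$ has minimum degree at least $d+1$, then every subgraph has minimum degree at most $d$, which is exactly the definition of $d$-degeneracy; the first part then bounds the edge count by $d(n-d)$. Hence a simple bipartite graph of order $n \ge 2d$ with strictly more than $d(n-d)$ edges must contain some subgraph of minimum degree at least $d+1$.

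There is essentially no hard step here; the only thing to be slightly careful about is choosing the right base case ($n = 2d$, where $K_{d,d}$ shows the bound is tight) and noting that the inductive step requires $n - 1 \ge 2d$, so that the hypothesis on the smaller graph continues to apply. The role of bipartiteness is confined to the base case and could not be replaced by any purely $d$-degenerate argument, since without it the bound would incur an unavoidable additive $\binom{d}{2}$.
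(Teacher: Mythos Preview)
Your proof is correct and essentially identical to the paper's: the paper also peels off $n-2d$ vertices of degree at most $d$ one by one, then bounds the remaining $2d$-vertex bipartite graph by $d^2$ edges, giving $d(n-2d)+d^2=d(n-d)$. Your inductive phrasing and the paper's iterative phrasing are the same argument, and you even state the iterative version explicitly.
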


\begin{proof}
Construct a sequence of graphs $G_0, G_1, G_2, \dots, G_{n-2d}$, where $G_0 = G$ and for $i \ge 1$, $G_i = G_{i-1} - v_i$ where $v_i$ has degree at most $d$ in $G_{i-1}$.  Since $G_{n-2d}$ is a bipartite simple graph of order $2d$, it follows that $|E(G_{n-2d})| \le d^2$, and $|E(G)| \le |E(G_{n-2d})| + d(n-2d) \le d^2 + d(n-2d) = d(n-d)$.
\end{proof}

This lemma is sharp:  if $n \ge 2d$, then $K_{d,n-d}$  is a $d$-degenerate bipartite graph with exactly $d(n-d)$ edges.

\begin{corollary}[Diamond Corollary] \label{diamondcor}
Suppose that we have a locally irreducible embedding with distinct antifaces $A$ and $B$.  Suppose that either $k=0$ and $|AB| \ge 3$, or $|AB| \ge 3k+4$, and that there exist vertices of types $AP$ and $BQ$ with $P, Q \notin \{A, B\}$ (possibly $P=Q$).
Then there is a vertex of type $AB$ that is either interlaced on $A$ with a vertex of type $AP$ or interlaced on $B$ with a vertex of type $BQ$.
\end{corollary}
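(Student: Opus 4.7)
My plan is to apply the Diamond Lemma (\ref{diamond}) with $y \in AP$ (or $z \in BQ$) playing the role of one of the three vertices $t, u, v$. The key preliminary observation, implicit in the proof of the Diamond Lemma, is that for any $x$ of type $AB$ and any $w$ of any type $AC$ with $C \neq B$, every arc of $D$ between $x$ and $w$ carries antiface $A$ on one of its sides (the shared antiface at the two ends lies in $\{A,B\}\cap\{A,C\}=\{A\}$); hence every $\usg{D}$-edge between such an $x$ and such a $w$ is automatically an $\usg{A}$-edge, and the symmetric statement holds for $B$.

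Applying this observation to $y \in AP$ together with the density bound, $y$ is adjacent in $\usg{D}$ to at least $|AB| - k \geq 2k + 4$ vertices of $AB$ (or at least $3$ when $k = 0$), all of which are distinct neighbors of $y$ on the cyclic walk $\usg{A}$. Since the number of such neighbors is at most $2 m_y$, where $m_y$ is the number of occurrences of $y$ on $\usg{A}$, this forces $m_y \geq k + 2 \geq 2$. The symmetric argument gives $m_z \geq k + 2$ for $z$ on $\usg{B}$, so both $y$ and $z$ appear multiple times on their respective antiface walks.

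Next I would locate the diamond. The $m_y$ occurrences of $y$ partition $\usg{A}$ into $m_y$ arcs. If any $AB$-vertex has occurrences in two different arcs then it is already interlaced with $y$ on $A$ and we are done. Otherwise, each $AB$-vertex is confined to a single $y$-arc, and I would use a pigeonhole argument balancing $|AB| \geq 3k + 4$ against the $k$-non-neighbor budgets to locate a $y$-occurrence $q$ together with two vertices $u, v$ of non-$AB$ type consecutive with $q$ on $\usg{A}$, and an $x \in AB$ adjacent in $\usg{D}$ to each of $y, u, v$. Applying the Diamond Lemma to the triple $(y, u, v)$ (noting that its proof uses only that each of the three is on $A$ and not of type $AB$, not that they share a common second-antiface type) produces interlacing of $x$ on $A$ with $y$ or $u$. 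If the interlacing is with $y$ we are done; otherwise we run the entirely symmetric argument on $\usg{B}$ using $z \in BQ$.

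The main obstacle I expect is the pigeonhole step that places the triple $(x, u, v)$ correctly near an occurrence of $y$: it must simultaneously account for the at-most-$k$ non-neighbors of each of the three ``outer'' vertices $y, u, v$ (a total budget of $3k$ to subtract from $|AB|$, which explains the threshold $|AB| \geq 3k + 4$) and ensure that the immediate successors of $q$ on $\usg{A}$ are non-$AB$. The case $k = 0, |AB| \geq 3$ is considerably easier since $\usg{D}$ is complete and the density constraints become automatic; only the structural existence of consecutive non-$AB$ vertices on $\usg{A}$ near $y$ (or $\usg{B}$ near $z$) needs to be verified.
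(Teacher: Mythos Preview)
Your plan inverts the roles in the Diamond Lemma relative to what the paper does, and this inversion creates a genuine gap.  In the Diamond Lemma the conclusion is that the single vertex $x$ is interlaced on $A$ with \emph{some one} of $t,u,v$; you cannot control which.  You put the given $y\in AP$ into the triple and hope the interlacing lands on $y$.  If instead it lands on $u$ or $v$ (your ``non-$AB$'' neighbours of $y$ along $\usg A$), you have an $AB$-vertex interlaced on $A$ with a vertex of some type $AR$, $R\ne B$, but there is no reason $R=P$, so the stated conclusion fails.  Your fallback ``otherwise run the symmetric argument on $\usg B$'' is a non-sequitur: the $A$-side outcome gives no information forcing the $B$-side argument to succeed, and the same failure mode can recur there.  (Your claim that the Diamond Lemma yields interlacing with ``$y$ or $u$'' only is also incorrect: the ``without loss of generality'' in its proof swaps $t$ and $v$, so the interlacing may be with any of the three.)

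Beyond this logical gap, the pigeonhole step you sketch is not established.  You need, simultaneously, an occurrence of $y$ on $\usg A$ whose two walk-neighbours $u,v$ are both of non-$AB$ type, with $uv\in E(\usg A)$, \emph{and} an $x\in AB$ adjacent in $\usg D$ to all of $y,u,v$.  But you have already shown that $y$ has at least $2k+4$ neighbours in $AB$ along $\usg A$, so many walk-neighbours of $y$ are of type $AB$, not non-$AB$; the ``$3k$ budget'' heuristic does not by itself produce such a configuration.

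The paper avoids all of this by reversing the roles: it takes $t,u,v$ to be three vertices \emph{of type $AB$} lying in the set $S\subseteq AB$ of common $\usg D$-neighbours of the given $x\in AP$ and $x'\in BQ$ (so $|S|\ge |AB|-2k$).  Since any edge of $\usg D$ between two $AB$-vertices lies in $\usg A$ or $\usg B$, one looks at the induced subgraph on $S$, finds three pairwise-incident edges (a triangle when $k=0$, a vertex of degree $\ge 3$ when $|AB|\ge 3k+4$ forces $\delta\ge 3$), and by pigeonhole two of them, say $tu,uv$, lie in the same $\usg A$ or $\usg B$.  Now the Diamond Lemma with $t,u,v\in AB$ and the single vertex $x$ (or $x'$) guarantees that $x$ is interlaced on $A$ with one of $t,u,v$---and whichever it is, that vertex is of type $AB$, so the conclusion holds immediately.
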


\begin{proof}
Let $x$ be a vertex of type $AP$, and $x'$ a vertex of type $BQ$, and let $S$ be the set of vertices of type $AB$ adjacent to both $x$ and $x'$.  Then $|S| \ge |AB|-2k$.  Let $H$ be the subgraph of $\usg{D}$ induced by $S$.  Then $\delta(H) \ge |S|-k-1 \ge |AB|-3k-1$.  Every edge of $H$ is an edge of $\usg{A}$ or $\usg{B}$ or both.

If $k=0$ and $|AB| \ge 3$, then $\usg{D}$ is complete, which means that $|S| = |AB| \ge 3$ and $H$ has a triangle.
If $|AB| \ge 3k+4$ then $\delta(H) \ge 3$ so $H$ has a vertex of degree at least $3$.  In either case $H$ has three edges that pairwise have a vertex in common.  At least two of these edges, say $tu$ and $uv$, belong to the same one of $\usg{A}$ or $\usg{B}$.

If $tu, uv \in E(\usg{A})$ then we apply the Diamond Lemma (\ref{diamond}) to $t, u, v$ and $x$ to get $x$ interlaced with one of $t, u, v$ on $A$.  If $tu, uv \in E(\usg{B})$ then we apply the Diamond Lemma (\ref{diamond}) to $t, u, v$ and $x'$ to get $x'$ interlaced with one of $t, u, v$ on $B$.
\end{proof}

We could also try to improve the above result by finding a triangle in $H$ when $k \ge 1$.  To do this we could use Mantel's Theorem (an $n$-vertex simple graph with average degree greater than $n/2$ has a triangle) for $H$.  However, this would require $|AB| \ge 4k+3$, which is at least as strong a condition as $|AB| \ge 3k+4$ when $k \ge 1$.

In our arguments we need the antifaces to be reasonably large (have many vertices) so that we can force interlacing to occur using the Three Neighbor Lemma (\ref{3adj}) or the Diamond Lemma (\ref{diamond}) or their consequences.
The next result will be applied to use large antifaces to help `blow up' smaller antifaces in an embedding, to obtain situations where we can make progress.
It involves a cyclically ordered finite set of points with some possible break points (black), some reasonably uniformly distributed points (white) and some arbitrarily distributed points (red).  It allows us to split the order at two black points so that we obtain something close to a desired division of the white and red points.

For this result we need to work with cyclic orders, both of a finite set of points and of the points of a circle.  In both cases, for distinct points $a$ and $b$, we denote by $(a,b)$ the set of points encountered moving forward from $a$ until we reach $b$ (excluding $a$ and $b$); we can then define $[a,b] = (a,b) \cup \{a,b\}$.
We define $(a,a) = \emptyset$ and $[a,a] = \{a\}$.
For $c \in \mR$ we let $c\mZ = \{ ci \;|\; i \in \mZ\}$.
We treat a circle of circumference $c$ as the additive group $\mR/c\mZ$ of reals modulo $c$, and identify its points with the interval $[0,c) \subseteq \mR$.

\begin{lemma}[Division Lemma]\label{division}
Suppose $C$ is a finite cyclic order where each point of $C$ is colored black, white, or red.  Let the black points be $b_0, b_1, b_2, \dots, b_{k-1}$ in cyclic order (indexed by elements of $\mZ_k$), and let $W$ and $R$ be the sets of white and red points, respectively.
Let $m$ be a positive integer.
Suppose that $|W \cap [b_i, b_{i+1}]| \le m$ for every $i \in \mZ_k$, and that $|W| > |R|$.
Then for every $p \in \mR$ with $m \le p \le |W \cup R|-m$ there are $b_i$ and $b_j$, $i \ne j$, such that
$p-m < |(W \cup R) \cap [b_i,b_j]| < p+m$. 
\end{lemma}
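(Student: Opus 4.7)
My plan is to argue by contradiction, converting the combinatorial statement into a geometric covering problem on a circle. I would place the points of $C$ on a circle of circumference $N = |W \cup R|$ so that the arc from $b_i$ to the next black point $b_{i+1}$ has length $a_i := w_i + r_i$, and write $x_i$ for the position of $b_i$ and $X = \{x_0, \dots, x_{k-1}\}$. Under this identification $S(i,j) := |(W \cup R) \cap [b_i, b_j]|$ equals $(x_j - x_i) \bmod N$. Suppose for contradiction that some $p \in [m, N-m]$ fails the conclusion: no pair $i \ne j$ has $|S(i,j) - p| < m$. Since $p$ is bounded away from $0$ and $N$, this is equivalent to $X + p \subseteq U$, where $U$ is the union of the ``middles'' $U_\ell := [x_\ell + m, x_{\ell+1} - m]$ taken over the \emph{bad} gaps $a_\ell \ge 2m$; good gaps ($a_\ell < 2m$) contribute nothing to $U$.

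Let $b'$ be the number of bad gaps and $L$ their total length; let $J$ be the set of bad-gap indices whose middle meets $X+p$, and put $L_{\mathrm{used}} = \sum_{j \in J} a_{\ell_j}$. Traversing the circle, the $k$ points of $X + p$ split into $|J|$ consecutive runs (one per used component of $U$) separated by $|J|$ ``jumps''. Two elementary inequalities follow: the total length of gaps internal to runs is at most $L_{\mathrm{used}} - 2m|J|$, so the total jump length $L_{\mathrm{jump}}$ satisfies $L_{\mathrm{jump}} \ge N - L_{\mathrm{used}} + 2m|J|$; and each jump gap, having length at least $2m$, is itself a bad gap of $X$, with distinct jumps corresponding to distinct bad gaps.

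The key additional observation is that because $X + p \subseteq U$, \emph{every} bad gap of $X$ is either one of the $|J|$ jump gaps or an internal gap of $X + p$, so the $b' - |J|$ non-jump bad gaps each contribute at least $2m$ to the internal total; this gives $L_{\mathrm{jump}} \le L - 2m(b' - |J|)$. Combining with the lower bound on $L_{\mathrm{jump}}$ yields $L + L_{\mathrm{used}} \ge N + 2m b'$, and $L_{\mathrm{used}} \le L$ then produces $L \ge N/2 + m b'$. The hypothesis $w_\ell \le m$ in each bad gap finally gives $|R| \ge \sum_{\ell \text{ bad}} (a_\ell - m) = L - m b' \ge N/2$, contradicting the strict inequality $|R| < N/2$ that follows from $|W| > |R|$.

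I expect the main obstacle to be precisely this refinement from $|J|$ to $b'$: the straightforward run-and-jump accounting alone only yields $L \ge N/2 + m|J|$, which is too weak whenever $|J| < b'$, and the density hypothesis $|W| > |R|$ just barely fails to close the argument without invoking the fact that unused middle components still force their underlying bad gaps to appear as internal (rather than jump) gaps of $X + p$. The degenerate cases ($k \le 1$, or coincident black points with $a_i = 0$) are handled either by vacuousness of the hypotheses or by noting that such coincidences only contribute the harmless value $0$ to the difference set, which lies outside the target interval $(p - m, p + m)$.
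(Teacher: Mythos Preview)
Your argument is correct, and it takes a genuinely different route from the paper's. Both proofs begin with the same geometric reinterpretation: place the black points at positions $x_i$ on a circle of circumference $N=|W\cup R|$ so that consecutive gaps have length $a_i=w_i+r_i$, and identify $S(i,j)$ with arc length. From there the arguments diverge.

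The paper finishes with a continuous measure-theoretic pigeonhole. It colors the \emph{circle itself}: the two half-intervals of length $\tfrac12\omega_i$ adjacent to each $x_i$ are white, the remainder of each gap is red, so the red set $R'$ has total length $|R|<N/2$. Then $R'\cup(R'-p)$ cannot cover the circle, yielding $y$ with both $y$ and $y+p$ white-or-black; the coloring is designed so that any such point lies within $m/2$ of some $x_i$, and one reads off the desired $x_i,x_j$ directly. This is short and structureless once the coloring is set up.

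Your approach stays discrete: assuming failure forces $X+p$ into the union $U$ of gap ``middles'', and you extract the contradiction by a run-and-jump double count on the $k$ translated points. The crucial step is your refinement from $|J|$ to $b'$ (observing that non-jump bad gaps of $X$ sit inside runs and each eats at least $2m$ of the internal total, so $L_{\text{jump}}\le L-2m(b'-|J|)$); combining with the obvious lower bound $L_{\text{jump}}\ge N-L_{\text{used}}+2m|J|$ and $L_{\text{used}}\le L$ gives $L\ge N/2+mb'$, whence $|R|\ge L-mb'\ge N/2$. This is longer but entirely elementary, and it makes transparent exactly where the hypothesis $w_\ell\le m$ enters (only at the very last step) and why the strict inequality $|W|>|R|$ is needed. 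One small remark: the ``key additional observation'' you flag is in fact automatic, since every gap of $X+p$ is either internal or a jump; the content is just that there are exactly $|J|$ jump gaps and they are all bad, so the remaining $b'-|J|$ bad gaps are internal.
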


\begin{proof}
Let $\nw_i = |W \cap [b_i,b_{i+1}]| \le m$ and $\nr_i = |R \cap [b_i, b_{i+1}]|$ for $i \in \mZ_k$.  Let $\ell = |W \cup R| = |W|+|R| = \sum_{i \in \mZ_k} (\nw_i+\nr_i)$.
Because $m \le p \le \ell-m$, we have $\ell \ge 2m$.  Since $|W| > |R|$, we have $|W| \ge m+1$, and so $k \ge 2$.
Define (not necessarily distinct) real numbers $x_0, x_1, \dots, x_k$ by
$$x_0 = 0, \qquad\hbox{and}\qquad
 x_{i+1} = x_i + \nw_i + \nr_i \quad\hbox{for $i \in \{0, 1, 2, \dots, k-1\}$}.$$
Then $x_k = \ell$, so we may identify $x_0=0$ with $x_k = \ell$ in $[0,\ell] \subseteq \mR$ and treat the resulting set $C'$ simultaneously as the group $\mR/\ell\mZ$, as a cyclic order, and as a circle of circumference $\ell$.
We use $\la$ to denote the length measure on $C'$.

We now color each point of $C'$ black, white, or red.
All points $x_i$ are black.
For each $i \in \mZ_k$ the interval $(x_i, x_{i+1})$ is empty if $\nw_i=\nr_i=0$, but otherwise we color it as follows.
If $\nw_i=0$ and $\nr_i > 0$ we color all of $(x_i, x_{i+1})$ red.  If $\nw_i > 0$ then we color $(x_i, x_i+\hf \nw_i) \cup (x_{i+1}-\hf \nw_i, x_{i+1})$ white and $[x_i + \hf \nw_i, x_{i+1}-\hf \nw_i]$ red (even if $\nr_i=0$, the point halfway between $x_i$ and $x_{i+1}$ is red, which we need for (iii) below).  Let $B'$, $W'$, and $R'$ be the sets of black, white, and red points in $C'$, respectively.  We note the following properties.

% see https://tex.stackexchange.com/questions/119919/
%    no-spacing-between-enumerated-items-with-usepackageenumerate
% \begin{enumerate}[label=(\roman*),topsep=2pt,parsep=0pt]
\begin{enumerate}[(i),topsep=2pt,parsep=0pt]

\item $B'$ is a set of measure zero.

\item $\la([x_i, x_{i+1}]) = \nw_i + \nr_i = |(W \cup R) \cap [b_i, b_{i+1}]|$, with
$\la(W' \cap [x_i, x_{i+1}]) = \nw_i$, and
$\la(R' \cap [x_i, x_{i+1}]) = \nr_i$.

\item If $y \in B' \cup W'$ then there exists $i$ with $x_i \in (y-\hf m, y+\hf m)$.
\end{enumerate}

From (i) and (ii) it follows that $\ell = \la(C') = |R|+|W|$, with $\la(W') = |W|$, and $\la(R') = |R| < |W|$. Hence $\la(R') < \hf \la(C')$.
Therefore, letting $R''$ be $R'$ translated by $-p$ in $C' = \mR/\ell\mZ$, we have
$\la(R' \cup R'') \le \la(R') + \la(R'') = 2\la(R')< \la(C')$, so there exists $y \notin R' \cup R''$, which means that $y, y+p \notin R'$.  By (iii) there is some $x_i \in (y-\hf m,y+\hf m)$ and some $x_j \in (y+p-\hf m, y+p+\hf m)$; these intervals are disjoint because $m \le p \le \ell-m$, so $x_i \ne x_j$.  Now $\la([x_i,x_j])$ differs by less than $m$ from $\la([y,y+p])=p$.  Again from (i) and (ii) we get $\la([x_i,x_j]) = |(W \cup R) \cap [b_i, b_j]|$, so our result follows.
\end{proof}

We will apply the Division Lemma (\ref{division}) with $p$ either an integer or a half-integer.  In these cases the bounds on $q=|(W \cup R) \cap [b_i, b_j]|$, which say that $|q-p| < m$, cannot be improved.  When $p$ is an integer, the bound becomes $|q-p| \le m-1$, and when $p$ is a half-integer it becomes $|q-p| \le m-\hf$.
Consider $\mZ_{4m+1}$, with $m \ge 1$, where $B = \{b_0=0, b_1=m+1\}$, and suppose $[b_0, b_1]$ contains $m$ white points, and $[b_1, b_0]$ contains $m$ white and $2m-1$ red points.
For $p=2m-1$ we must use $|(W \cup R) \cap [b_0,b_1]| = m = p-(m-1)$ to satisfy the conclusion, and for $p=2m$ we must use $|(W \cup R) \cap [b_1,b_0]| = 3m-1 = p+m-1$.
These show that the bound $|q-p| \le m-1$ cannot be improved when $p$ is an integer.
For $p=2m-\hf$ we must use either $|(W \cup R) \cap [b_0, b_1]| = m = p-(m-\hf)$ or $|W \cup R) \cap [b_1, b_0]| = 3m-1 = p + m-\hf$.
This shows that the bound $|q-p| \le m-\hf$ cannot be improved when $p$ is a half-integer.

We also cannot relax the condition $|W| > |R|$ to $|W| \ge |R|$.
Consider $\mZ_{6m+3}$, $m \ge 1$, with $B=\{b_0=0,b_1=2m+1,b_2=4m+2\}$, where each interval $[b_i,b_{i+1}]$ contains $m$ white and $m$ red points, so that $|W|=|R|=3m$.  If we let $p=3m$, then we cannot achieve $2m=p-m < |(W \cup R) \cap [b_i,b_j]| < p+m=4m$.

We now use the Division Lemma (\ref{division}) to show that we can use a large face $A$ to increase the size of a smaller face $B$.

\begin{lemma}[Blow Up Lemma]\label{blowup}
Suppose we have a locally irreducible embedding $\Phi$ with distinct antifaces $A$ and $B$, and there exists a vertex $x$ of type $AB$.  
Suppose that $|V(A)| \ge 5$ and $n \ge k+3$.
Then there is an embedding $\Phi'$ (possibly $\Phi'=\Phi$) that has the same profaces and antifaces as $\Phi$ except that $A$ and $B$ are replaced by antifaces $A'$ and $B'$ where $|V(A')|, |V(B')| \ge
\min(\hf |V(A)|-1, \hf(n-k-1))$ and $x$ has type $A'B'$.
\end{lemma}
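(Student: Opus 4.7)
The plan is to apply the Three Face Lemma (\ref{3face})(b) at $x$: split $A$ into two nontrivial $xx$-walks $A_1 \mw A_2$ at two chosen occurrences of $x$ on $A$, which replaces $A$ and $B$ by $A' = A_i \mw B$ and $B' = A_{3-i}$ for some $i \in \{1,2\}$ that is dictated by the rotation at $x$ and not under our control. Because $V(A') \supseteq V(A_i)$ and $V(B') = V(A_{3-i})$, it suffices to choose the split so that \emph{both} $|V(A_1)|$ and $|V(A_2)|$ are at least $T := \min(\hf|V(A)|-1, \hf(n-k-1))$; then $|V(B')| \ge T$ directly and $|V(A')| \ge |V(A_i)| \ge T$.

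I would first dispose of the degenerate case in which $x$ is visited by $A$ only once, so no nontrivial split at $x$ exists, by taking $\Phi'=\Phi$ and verifying the conclusion directly. There $|V(A)|\ge 5\ge T$ is immediate, and $|V(B)|\ge T$ should follow from local irreducibility together with the density hypothesis $n\ge k+3$: the many neighbors of $x$ in $D$ lie on faces incident with $x$, and because $A$ accounts for only one angle at $x$, the bulk of them are forced onto $B$.

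For the main case, with $x$ visited by $A$ at least twice, I would list these visits in cyclic order along $A$ as $b_0, b_1, \ldots, b_{m-1}$ and apply the Division Lemma (\ref{division}) to the cyclic sequence of vertex occurrences on $A$, taking the $b_i$'s as the black points. To convert raw point counts into lower bounds on distinct-vertex counts, I would color the non-$x$ occurrences by designating one ``representative'' occurrence per vertex of $V(A)\setminus\{x\}$ as white and the remaining occurrences as red, choosing the representatives so as to minimize $M := \max_i |W \cap [b_i, b_{i+1}]|$. Taking $p \approx \hf|W \cup R|$ in the Division Lemma then produces a split at some $b_i, b_j$ for which both resulting $xx$-subwalks of $A$ contain, up to an additive slack of $M$, half of the distinct non-$x$ vertices of $A$. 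A short arithmetic check using $|V(A)|\ge 5$ and $n\ge k+3$ then converts this into $|V(A_1)|, |V(A_2)|\ge T$, and invoking Lemma~\ref{3face}(b) with this split yields $\Phi'$ in which $x$ has type $A'B'$ automatically.

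The principal obstacle will be controlling the slack $M$, which can be large when most vertices of $V(A)$ are packed into a single gap between consecutive visits of $x$. I would handle such skewed configurations by a case split: either the dense gap can be exploited directly (taking both split points on its boundary so that the gap itself becomes one half of the split, giving a concrete lower bound on that side), or the skewness already forces $|V(B)|$ to exceed $T$ and $\Phi'=\Phi$ suffices. Carrying out this dichotomy cleanly, and checking that the subcases exactly cover the possibilities permitted by $|V(A)|\ge 5$ and $n\ge k+3$, will be the most delicate part of the argument.
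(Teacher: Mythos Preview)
Your high-level plan (split $A$ at two $x$-visits via Lemma~\ref{3face}(b), and use the Division Lemma to choose the split so both halves carry enough distinct vertices) matches the paper. But the scheme you describe for coloring points does not give you a usable instance of the Division Lemma, and your fallback dichotomy has no mechanism behind it.

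Concretely: with white $=$ one representative per vertex of $V(A)\setminus\{x\}$ and red $=$ all remaining non-$x$ occurrences, the quantity $|(W\cup R)\cap[b_i,b_j]|$ returned by the Division Lemma counts \emph{occurrences}, not distinct vertices, so it does not lower-bound $|V(A_i)|$. Moreover $|W|>|R|$ can fail outright (take $A$ with many repeated vertices), and ``choose representatives to minimize $M$'' gives no bound on $M$: if many vertices appear only inside a single $x$-to-$x$ gap, every choice of representatives places them all there. Finally, your escape hatch ``skewness already forces $|V(B)|\ge T$'' is asserted without a link between the distribution of vertices along $A$ and the size of $B$; nothing in your setup provides one.

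The missing idea is this. Let $Y\subseteq V(A)\setminus\{x\}$ be the set of vertices joined to $x$ by at least one \emph{arc of $A$}, and set $Z=V(A)\setminus(\{x\}\cup Y)$. Color exactly one occurrence per $y\in Y$ white, choosing it to be the endpoint of a fixed $A$-arc $e_y$ between $x$ and $y$; color one occurrence per $z\in Z$ red; discard everything else. Now between two consecutive $x$-occurrences at most two arcs $e_y$ can land (one at each end), so automatically $m=2$, and $|W\cup R|=|V(A)|-1$ counts distinct vertices. The genuine case split is $|Y|>|Z|$ versus $|Y|\le|Z|$. In the second case the neighbor count at $x$ gives $|Y|\ge (n-k-1)-(|V(B)|-1)$, which is exactly the bridge you were missing between $A$'s local structure and $|V(B)|$: either $|V(B)|\ge\hf(n-k-1)$ already and $\Phi'=\Phi$ works, or $|Y|\ge\hf(n-k+2)\ge 3$ and one runs the Division Lemma on $2|Y|-1$ points after thinning the reds. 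Your single-visit subcase is then absorbed into this analysis rather than handled separately.
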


\begin{proof}
Let $a = |V(A)|$ and $b = |V(B)|$.  Suppose that the sequence of vertices along $A$ is $(v_0, v_1, v_2, \dots, v_{t-1})$, with indexing by elements of $\mZ_t$.  Let $Y = \{y \ne x\;|\; y = v_{i-1}$ or $v_{i+1}$ for some $i \in \mZ_t$ with $v_i = x\}$; in other words, $Y \subseteq V(A)$ is the set of vertices joined to $x$ by at least one arc of $A$.
We know $x$ has at least $n-k-1$ neighbors, each of which must be joined to $x$ by an arc of $A$ or $B$ (or both). A vertex joined to $x$ by an arc of $B$ must be a vertex of $B$, and $x$ has at most $b-1$ neighbors in $V(B)-\{x\}$.  Thus, $|Y| \ge (n-k-1)-(b-1) = n-k-b$.
Let $Z = V(A) - (\{x\} \cup Y)$.

We will color some of the elements of $\mZ_t$ as follows.  If $v_h = x$ then $h$ is black.
For each $y \in Y$ choose one arc $e_y$ of $A$ joining $x$ and $y$, let $v_i$ be the occurrence of $y$ on $A$ as an end of $e_y$, and color $i$ white.
For each $z \in Z$ choose one occurrence $v_j$ of $z$ on $A$ and color $j$ red.
So there are $|Y|$ white points and $|Z|$ red points.
There are at most two arcs $e_y$ between consecutive occurrences of $x$ on $A$, and hence at most two white points of $\mZ_t$ between consecutive black points.

Suppose first that $|Y| > |Z|$.
Discard all uncolored points of $\mZ_t$, creating a cyclic order $C$ with some black points, $\ell=a-1 \ge 4$ white or red points, and more white than red.
Apply the Division Lemma (\ref{division}) with $m=2$ and $p=\hf\ell$: since $\ell \ge 4$ we have $2 \le p \le \ell-2$.  We obtain distinct black points $h, j \in C$ so that the interval $[h,j]$ of $C$ contains $q_1$ white or red points where $p-2 < q_1 < p+2$.  The number of white or red points in $[j,h]$ is $q_2 = \ell-q_1 = 2p-q_1$, which also satisfies $p-2 < q_2 < p+2$.  Since $p, q_1, q_2 \in \hf \mZ$ we get $q_1, q_2 \ge p-\frac32 = \hf (\ell-3)$.
Write $A$ (after cyclic shifting) as $A_1 \mw A_2$ where $A_1 = v_h(=x) \dots v_j(=x)$ and $A_2 = v_j \dots v_h$.  Each $A_i$ contains $q_i$ distinct points of $V(A)-\{x\}$ and hence $q_i+1 \ge \hf (\ell-1)$ distinct points of $V(A)$.
Thus, applying part (b) of the Three Face Lemma (\ref{3face}) to $A = A_1 \mw A_2$ and $B$ gives new antifaces $A'$ and $B'$ each with at least $\hf (\ell-1) = \hf a -1$ vertices.

Now suppose that $|Y| \le |Z|$.  Let $c = \min(\hf a-1, \hf(n-k-1))$.  Since $a \ge 5$, $a \ge \hf a-1 \ge c$.  If $b \ge \hf(n-k-1)$ then $b \ge c$ as well, and we take $A'=A$, $B'=B$.
So we may assume that $b \le \hf(n-k-2)$.  Now $|Y| \ge n-k-b \ge \hf(n-k+2)$ and therefore $|Y| \ge 3$ since $n \ge k+3$ and $|Y|$ is an integer.
Uncolor some points of $Z$ so that the number of red points is $|Y|-1$, which is less than the number of white points.  Apply the Division Lemma (\ref{division}) (discarding uncolored points) and part (b) of the Three Face Lemma (\ref{3face}) as in the previous paragraph, but with $\ell = 2|Y|-1 \ge 5 \ge 4$ white or red points.  This gives new antifaces $A'$ and $B'$ each with at least $\hf (\ell-1) = |Y|-1 \ge \hf(n-k)$ vertices.

Thus, we always have $A'$ and $B'$ each with at least $\min(\hf a-1, \hf(n-k-1))$ vertices.  In all cases $x \in V(A') \cap V(B')$.
\end{proof}

\section{Proof of the main result} \label{MainProof}

In this section we prove Theorem \ref{densemain}.  We need one final tool, namely the touch graph of a locally irreducible embedding.  The touch graph encodes information about how each antiface in in a locally irreducible embedding touches itself and other antifaces at the vertices, and  will make our arguments easier to explain and visualize.

\begin{definition}
Let $\Phi$ be a locally irreducible oriented directed embedding.  The \emph{touch graph} $K$ is an undirected graph which may have loops and multiple edges.  The vertices of $K$ are the antifaces of $\Phi$,  and the edges of $K$ are the vertices of $D$, i.e., $V(K) = \cA$ and $E(K) = V(D)$. For the incidence relation of $K$, a vertex of $D$ of type $\only A$ in $\Phi$ becomes a loop of $K$ at $A$, and a vertex of type $AB$ becomes a link (non-loop edge) of $K$ joining $A$ and $B$.
Thus in $K$, $A\ov B$ is the set of edges incident with $A$ but not $B$, $\only A$ is the set of loops at $A$, $\also A$ is the set of links at $A$, and
$V(A)$ is $E_K(A)$, the set of all edges incident with $A$ in $K$.  Since $\Phi$ is locally irreducible, $K$ is well-defined.

It will sometimes be convenient to represent $K$ in simplified form, where we replace multiple parallel edges (including loops) by a single edge weighted by the number of parallel edges.
\end{definition}

Critically, in the touch graph, all sets of vertices of $D$ are interpreted as sets of edges.  Thus, symbols such as $u, v, w, \dots$ represent vertices in $D$ but edges in $K$.  See Figures \ref{touchgraphbig}, \ref{MergeGreenFaces}, and \ref{touchgraphJo}.

Figure \ref{touchgraphbig} shows a directed embedding of a digraph $D$, where $D$ is an orientation of $K_{12}$ minus a perfect matching.  It has two profaces, one dark teal (dark, if color is not visible) and one light teal (light), shown on the left.  It has six anti-faces, one light yellow (very light), one orange (light), one dark red (dark), and three black, shown on the right. It is straightforward to check from the rotations at the vertices that this is indeed a directed embedding.

\begin{figure}[ht!]
     \centering
\hfill
   \begin{subfigure}[c]{0.45\textwidth}
        \centering
    \includegraphics[clip, trim=1cm 11.5cm 2cm 1cm,scale=0.5]{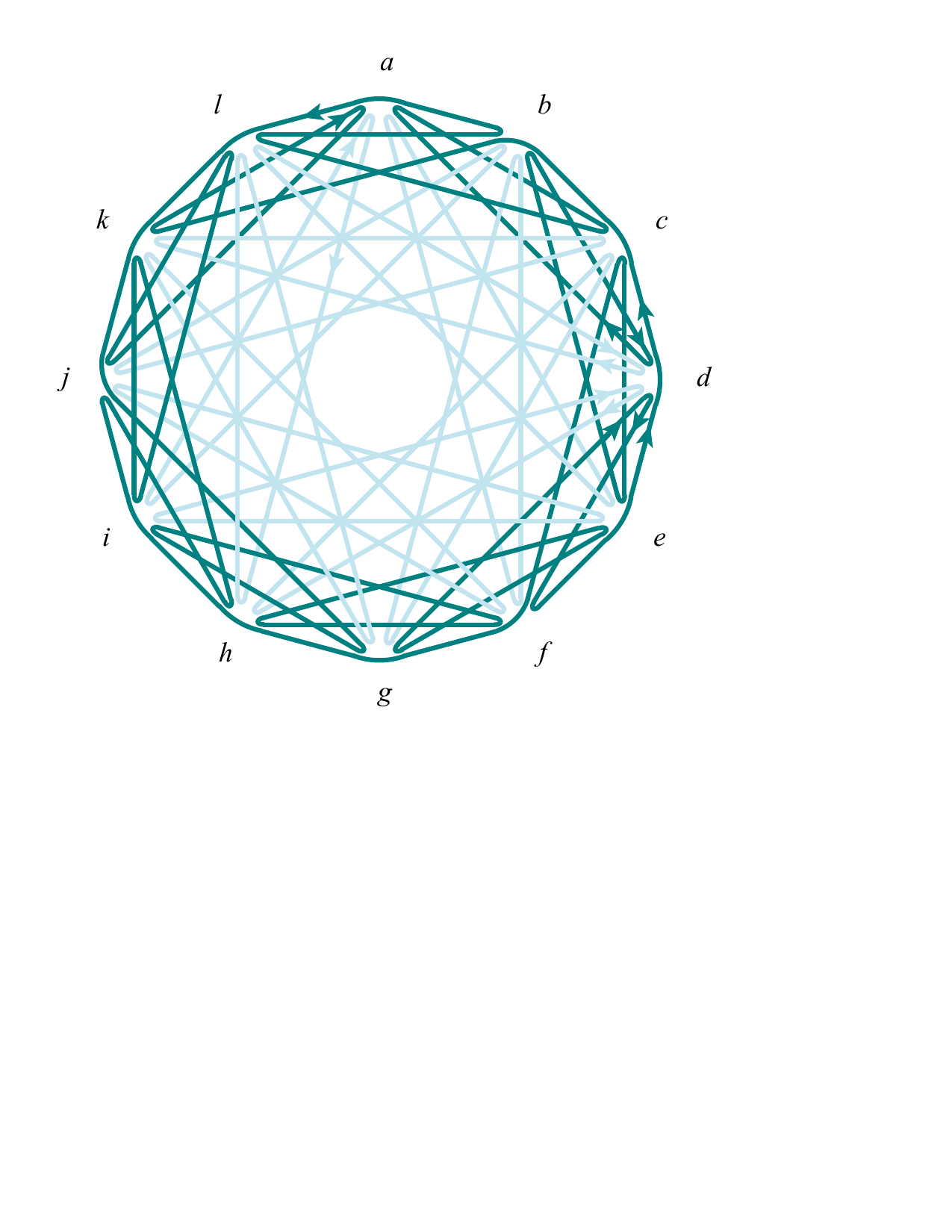}
        \caption{Profaces}
        \label{profaces}
     \end{subfigure}
        \hfill
        \begin{subfigure}[c]{0.475\textwidth}
        \centering
\centering
    \includegraphics[clip, trim=2cm 3.25cm 2cm 2.5cm,scale=0.63]{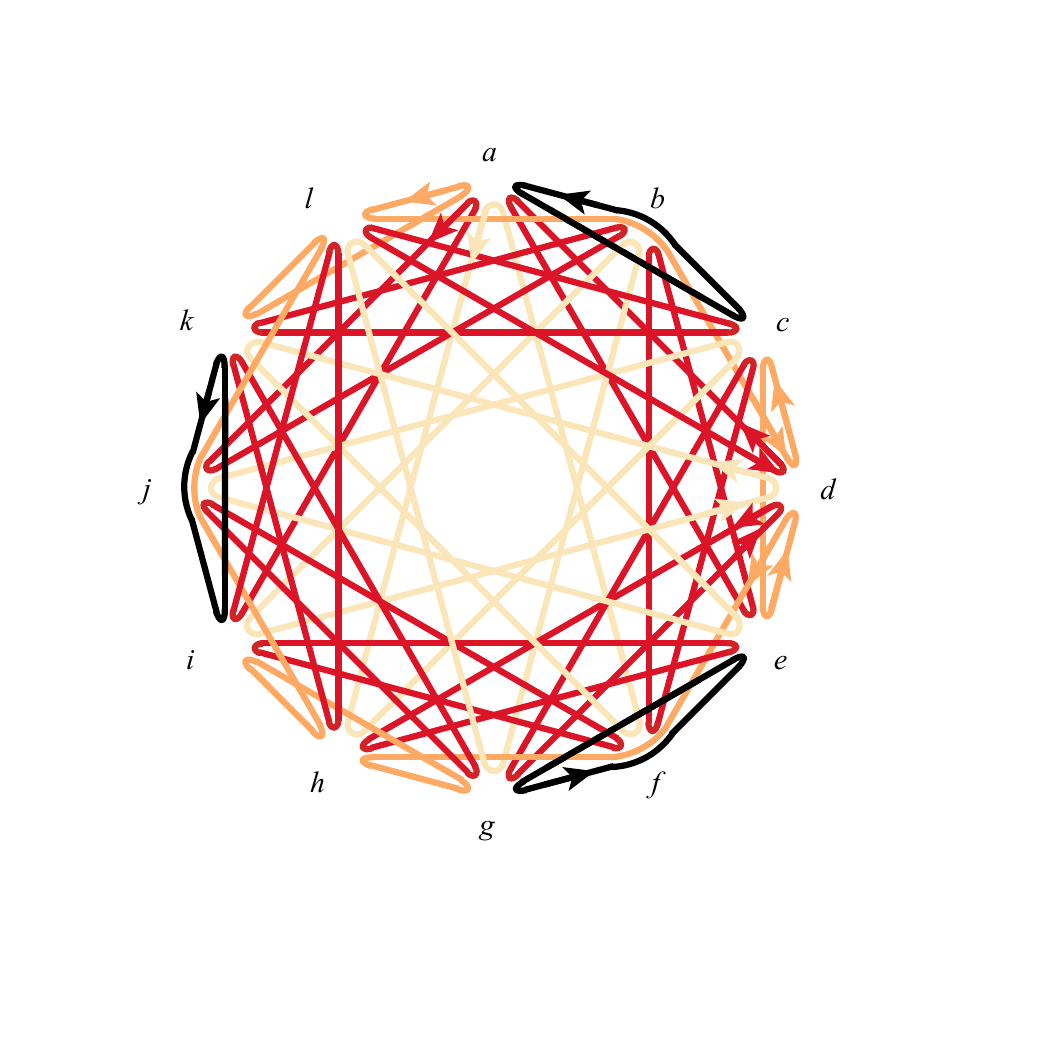}
        \caption{Antifaces}
        \label{antifaces}
     \end{subfigure}
\caption{A directed embedding of an  orientation of $K_{12}$ minus a perfect matching.}
\label{touchgraphbig}
\end{figure}

  Notice that the three antifaces colored light yellow, orange, and dark red, meet at vertex $d$ in the embedding of $D$.  Thus we can apply Lemma \ref{3face} to merge these faces into a single orange face, shown in Figure \ref{MergeGreenFaces}.   After merging these faces, the embedding of $D$ has only four antifaces and is locally irreducible.  We can thus create the touch graph shown in Figure \ref{touchgraphJo}.

\begin{figure}[h!]
  \centering    \includegraphics[clip, trim=2cm 3cm 3cm 2cm, width=0.5
  \textwidth]{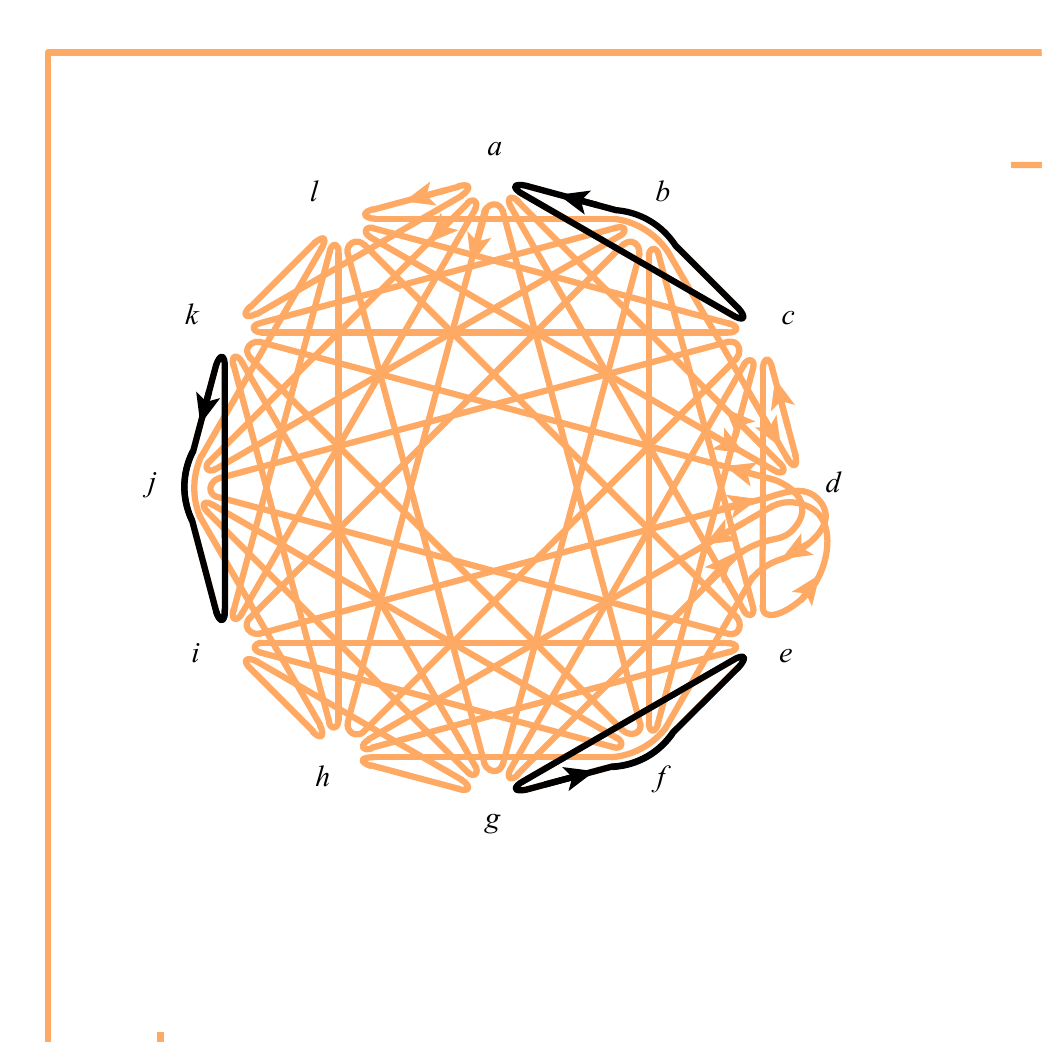}
 \caption{Applying the 3-face lemma to the light yellow, orange, and dark red faces at vertex $d$ to create only a single orange face. } 
\label{MergeGreenFaces}
  \end{figure}

\begin{figure}
  \hbox to \textwidth{%
    \hfill
    \includegraphics[width=0.3\textwidth]{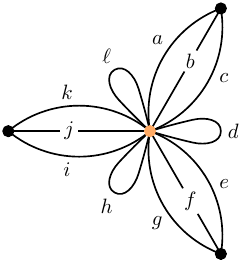}%
    \hfill
    \includegraphics[width=0.3\textwidth]{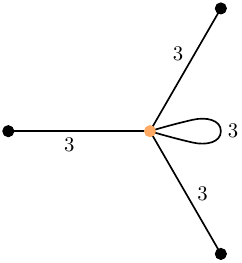}%
    \hfill
  }
  \caption{Touch graph for the three black and one orange (light) antifaces of Figure~\ref{MergeGreenFaces}, with simplified weighted version at right.} 
  \label{touchgraphJo}
\end{figure}

Note that $K$ is connected when $D$ is connected.  This can be seen as follows.
Given $A, B \in \cA$ choose $v \in V(A)$ and $w \in V(B)$.  The sequence of vertices on a $vw$-path in $\usg{D}$ gives a sequence of adjacent edges in $K$ with the first, $v$, adjacent to $A$ and the last, $w$, adjacent to $B$, and so $K$ has a connected subgraph containing $A$ and $B$.

We now have all the tools we need to prove the main theorem of this paper, which we repeat here for convenience.

\begin{theorem-densemain}
\densemaintext
\end{theorem-densemain}

\begin{proof}[Proof of Theorem \ref{densemain}]
We have an eulerian digraph $D$ satisfying $\delta(\usg{D}) \ge (4n+2)/5$.
In terms of $k = n-1-\delta(\usg{D})$, this means that $n \ge 5k+7$.
By our definition of eulerian, $D$ is connected (this also follows from $\delta(\usg{D}) \ge n/2$). 

We are given a directed circuit decomposition $\cC$ of $D$.  By Lemma \ref{dcdembedding} there exists an oriented directed embedding $\Phi$ of $D$ whose set of profaces is $\cC$.  As usual, we let $\cA$ denote the set of antifaces.  We will show that if $|\cA| \ge 3$ then we can reduce the number of antifaces, without changing the profaces.  After sufficiently many reductions, we obtain an embedding with one or two antifaces as desired. We use a case analysis, as follows.
In Case 1 we dispose of situations where $\Phi$ is not locally irreducible.  For the remaining cases we consider the structure of the touch graph $K$.  Case 2 is when $K$ has no loops, with subcases 2.1 and 2.2 depending on whether $K$ is not or is a star, respectively.  Case 3 is when $K$ has loops, with subcases 3.1 and 3.2 depending on whether there are loops at two or more vertices or at just one vertex, respectively.

\setcounter{case}{0}
\begin{case}\label{reduce}
Suppose some vertex belongs to three distinct antifaces.
Then applying part (a) of the Three Face Lemma (\ref{3face}) reduces the number of antifaces.
\end{case}

\smallskip
So we may henceforth assume that Case 1 does not happen, i.e., that $\Phi$ is locally irreducible.  We let $K$ be the touch graph of $\Phi$.   We say $K$ is a \emph{star} if $K$ has one vertex $A$ with which every edge is incident; in other words, if $V(A) = E_K(A)$ is equal to $V(D) = E(K)$ for some $A \in \cA$.

\begin{case}\label{noloops} We begin by assuming that $K$ has no loops.

\end{case}

\begin{subcase}\label{noloopnostar}
Suppose $K$ has no loops and is not a star, i.e., $\only P = \emptyset$ and $V(P) \ne V(D)$ for all $P \in \cA$.
Then $\also P = V(P)$ for all $P \in \cA$.
Let $A, B \in \cA$ be such that $\max \{|PQ| \;|\; P, Q \in \cA, P \ne Q\} = |AB|$, i.e., $A$ and $B$ have the maximum number of common vertices over all pairs of circuits in $\cA$. Without loss of generality assume that $|V(A)| \ge |V(B)|$, which implies that $|A \ov B| \ge |B\ov A|$.

Our strategy in Case \ref{noloopnostar} is to use the Three Neighbor Corollary (\ref{3adjcor}) (or a similar argument based on the Bipartite Degeneracy Lemma (\ref{bipdegen})) if $|AB|$ is not too large, or the Diamond Corollary (\ref{diamondcor}) if it is.
Because of the structure of our argument, we deal with small $|AB|$ first, and then the remaining situations in decreasing order of $|AB|$.

We know that every edge of $K$ is adjacent to (shares a vertex with) all but at most $k$ of the other edges.  If $S \subseteq E(K)$ with $|S| \ge k+1$, then every edge of $K$ not in $S$ is adjacent to at least $|S|-k$ edges of $S$, and hence is incident with an end of some edge of $S$.

\begin{subsubcase}\label{lek}
Suppose that $|AB| \le k$.
There is some edge $v$ in $K$ of type $AB$, and at most $k$ edges are not adjacent to $v$, i.e., at most $k$ edges of $K$ are incident with neither $A$ nor $B$.  Thus, $|V(A) \cup V(B)| \ge n-k \ge 4k+7$.  Since $|V(A)| \ge |V(B)|$, we have $|\also A| = |V(A)| \ge 2k+4$.
Since $|AP| \le |AB| \le k$ for all $P \in \cA$ with $P \ne A$, we have $|\also A|-|AP| \ge k+4$ for all such $P$.  Thus, there is interlacing by the Three Neighbor Corollary (\ref{3adjcor}), and by the Interlaced Faces Lemma (\ref{interlacing}) we can reduce the number of antifaces. (This argument also works for $|AB|=k+1$, although we cover that situation in Case \ref{mediumAB}.)
\end{subsubcase}% <= k

In the remainder of Case \ref{noloopnostar}, we may suppose that $|AB| \ge k+1$.  Thus, every vertex not in $AB$ must be adjacent to at least one vertex in $AB$, and hence every edge of $K$ not in $AB$ is incident with either $A$ or $B$.
Thus, $V(D) = E(K) = V(A) \cup V(B) = A\ov B \cup AB \cup B\ov A$.
Since $A\ov B = V(D)-V(B)$ and $V(B) \ne V(D)$ and $K$ is not a star, we have $A \ov B \ne \emptyset$.  Similarly, $B\ov A \ne \emptyset$.
Figure \ref{touchgraph2} shows the (simplified weighted) structure of $K$ in this situation.

\begin{figure}
  \centering
  \includegraphics[scale=1.1]{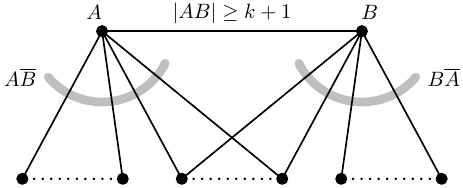}
  \caption{Touch graph for Cases \ref{bigAB}, \ref{eqAB}, and \ref{mediumAB}.}
  \label{touchgraph2}
\end{figure}

\begin{subsubcase}\label{bigAB}
Suppose that $|AB| \ge 3k+4$.  
Since $A\ov B, B\ov A \ne \emptyset$, there are $v \in AP$, $w \in BQ$ with $P, Q \notin \{A,B\}$.  So by the Diamond Corollary (\ref{diamondcor}) there is interlacing, and we can use the Interlaced Faces Lemma (\ref{interlacing}) to reduce the number of antifaces.
\end{subsubcase}

\begin{subsubcase}\label{eqAB}
Suppose that $|AB| = 3k+3$.  If $k=0$, then we can use the Diamond Corollary (\ref{diamondcor}) and the Interlaced Faces Lemma (\ref{interlacing}) to reduce the number of antifaces as in Case \ref{bigAB}.
Therefore, we may assume that $k \ge 1$.

We have $n = |A\ov B| + |B\ov A| + |AB| = |A\ov B|+|B\ov A|+3k+3 \ge 5k+7$, so $|A\ov B|+|B\ov A| \ge 2k+4$.
Since $|A\ov B| \ge |B\ov A|$, we have $|A\ov B| \ge k+2$.

Now think of $A\ov B$ and $AB$ as vertex sets in $D$, and consider the bipartite subgraph $H$ of $\usg{D}$ consisting of $V(H) = V(A)$ and exactly the edges of $\usg{D}$ between $A\ov B$ and $AB$.  We claim that $H$ has a subgraph $H'$ with minimum degree at least $3$.  This will follow from the Bipartite Degeneracy Lemma (\ref{bipdegen}) with $d=2$ provided $|V(H)| \ge 4$ and $|E(H)| > 2(|V(H)|-2)$. We can satisfy these conditions as follows. 
We have $|V(H)| = |A\ov B|+|AB| = |A\ov B|+3k+3 \ge 4k+5 \ge 4$.  Every vertex of $A\ov B$ is adjacent in $\usg{D}$, and hence in $H$, to at least $|AB|-k = 2k+3$ vertices of $AB$.  Since $k \ge 1$, it follows that
\begin{align*}
|E(H)| - 2(|V(H)|-2) & \ge (2k+3)|A\ov B| - 2(|A\ov B|+3k+1)
	= (2k+1)|A\ov B| -6k-2 \\
	&\ge (2k+1)(k+2)-6k-2
	= 2k^2-k = (2k-1)k > 0 .
\end{align*}
Thus, $E(H) > 2(V(H)-2)$ and we can apply the Bipartite Degeneracy Lemma (\ref{bipdegen}) with $d=2$. 
Therefore $H'$ exists, and we can apply the Three Neighbor Lemma (\ref{3adj}) with $S = V(H')$ to show that there is interlacing, and then use the Interlaced Faces Lemma (\ref{interlacing}) to reduce the number of antifaces.
\end{subsubcase}

\begin{subsubcase}\label{mediumAB}
Suppose that $k+1 \le |AB| \le 3k+2$.
Then $|A\ov B|+|B\ov A|=n-|AB| \ge (5k+7)-(3k+2)=2k+5$.  Since $|A\ov B| \ge |B\ov A|$, we have $|A\ov B| \ge k+3$.  Now for any $P \ne A$ we have $|\also A|-|AP| \ge |\also A|-|AB| = |V(A)|-|AB| = |A\ov B| \ge k+3$.  Therefore, by the Three Neighbor Corollary (\ref{3adjcor}) there is interlacing, and we can use the Interlaced Faces Lemma (\ref{interlacing}) to reduce the number of antifaces.
\end{subsubcase}

\end{subcase}

In the remaining cases, Cases \ref{noloopstar} and \ref{loop} below, we often need to modify the embedding to increase the number of vertices of some faces, using the Blow Up Lemma (\ref{blowup}).
We can always apply the Blow Up Lemma (\ref{blowup}) to $A, B \in \cA$ if $|V(A)| \ge n-k$ and $D$ has a vertex $x$ of type $AB$, because then $|V(A)| \ge 4k+7 \ge 5$, and we know that $n \ge 5k+7 \ge k+3$.
We will say we are \emph{blowing up $B$ using $A$}.

When we apply the Blow Up Lemma (\ref{blowup}), the number of antifaces does not change.
The two new antifaces $A'$ and $B'$ that replace $A$ and $B$ will have $|V(A')|, |V(B')| \ge \min(\hf(n-k)-1, \hf(n-k-1)) = \hf(n-k)-1 \ge \hf(4k+7)-1 = 2k+2\hf$, but since these are integers, $|V(A')|, |V(B')| \ge 2k+3$.
We therefore create two faces satisfying the `moderate face' condition of the Big and Moderate Faces Corollary (\ref{BMFlemma}).
Also we know that $V(A')\cup V(B') = V(A)\cup V(B)$, and $x \in A'B'$, so that $A'B' \ne \emptyset$.

\begin{subcase}\label{noloopstar}
Suppose that $K$ has no loops and is a star, i.e., $\only P = \emptyset$ for all $P \in \cA$ and $V(A) = V(D) = E(K)$ for some $A \in \cA$.
Then $AP = \also P = V(P)$ for all $P \in \cA$, $P \ne A$.
Let $B \in \cA$ be such that $\max \{|AP| \;|\; P \in \cA, P \ne A\} = |AB|$.

Our strategy in Case \ref{noloopstar} is to use the Three Neighbor Corollary (\ref{3adjcor}) if $|AB|$ is not too large, but if it is, we need to blow up a third face and use the Big and Moderate Faces Corollary (\ref{BMFlemma}).

If $|\also A|-|AB| \ge k+3$ then since $|AP| \le |AB|$ we have $|\also A|-|AP| \ge k+3$ for all $P \ne A$, there is interlacing by the Three Neighbor Corollary (\ref{3adjcor}), and by the Interlaced Faces Lemma (\ref{interlacing}) we can reduce the number of antifaces.  Therefore, we may assume that $|A\ov B| = |\also A|-|AB| = n-|AB| \le k+2$, i.e., that $|V(B)| = |AB| \ge n-k-2$.  Thus, at most $k+2$ vertices of $D$ do not belong to $B$.
The left side of Figure \ref{touchgraph34} shows the structure of $K$ in this situation.

There must be some third face $C$, so $|AC| = |V(C)| > 0$.
Since $|V(A)| = n$, we can blow up $C$ using $A$, obtaining a new embedding $\Phi'$ (and touch graph $K'$) with set of antifaces $\cA'$ in which $A, C$ are replaced by $A', C'$ with $|V(A')|, |V(C')| \ge 2k+3$ and $A'C' \ne \emptyset$.
If $\Phi'$ is not locally irreducible, we may apply Case \ref{reduce} to reduce the number of antifaces, so we may assume that $\Phi'$ is locally irreducible.

Since at most $k+2$ vertices of $D$ do not belong to $B$, we have $|BA'| \ge |V(A')| - (k+2) \ge k+1$ and similarly $|BC'| \ge k+1$. Also, $A'C' \ne \emptyset$.  Thus, $K'$ contains at least one triangle $(A'BC')$ and hence is not a star.

Assume without loss of generality that $|\only{A'}| \ge |\only{C'}|$.  
If $|\only{A'}|=0$ then $K'$ has no loops ($\only P$ has not changed for $P \ne A, C$), so we can apply Case \ref{noloopnostar} to reduce the number of antifaces.
So we may assume that $|\only{A'}| > 0$. Then $|V(A')| \ge n-k$ by Observation \ref{loopfacebig}, $|V(B)| \ge n-k-2 \ge 4k+5 \ge 2k+3$, and $|V(C')| \ge 2k+3$.  Thus, there is interlacing by the Big and Moderate Faces Corollary (\ref{BMFlemma}), and we can apply the Interlaced Faces Lemma (\ref{interlacing}) to reduce the number of antifaces.
\end{subcase}

\begin{figure}
  \hbox to \textwidth{%
    \hfill
    \includegraphics[scale=1.1]{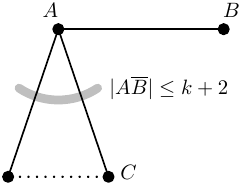}%
    \hfill\hfill
    \raise10pt\hbox{\includegraphics[scale=1.1]{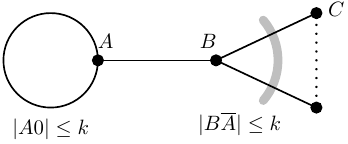}}%
    \hfill
  }
  \caption{Touch graph for Case \ref{noloopstar} with $|AB|$ large (left) and for Case \ref{onenhbr} (right).}
  \label{touchgraph34}
\end{figure}

\begin{case}\label{loop}
Suppose that $K$ has at least one loop, i.e. there is at least one $P \in \cA$ with $\only P \ne \emptyset$.
By Observation \ref{loopfacebig}, $|V(P)| \ge n-k$ for each such $P$.

Our strategy in Case \ref{loop} is to blow up faces until we either obtain a previous case or can apply the Big and Moderate Faces Corollary (\ref{BMFlemma}).

\begin{subcase}\label{twovert}
Suppose there are loops incident with at least two vertices of $K$.  Thus there are distinct $A,B \in \cA$ with both $\only A$ and $\only B$ nonempty.

Since $K$ is connected and $|\cA| \ge 3$, at least one of $A$ and $B$, say $A$, is adjacent to a third vertex $C$ of $K$, i.e., $AC \ne \emptyset$.  Since $|V(A)| \ge n-k$ we can blow up $C$ using $A$, to obtain a new embedding $\Phi'$ containing faces $B, A', C'$ with $|V(B)| \ge n-k$ and $|V(A')|, |V(C')| \ge 2k+3$.
If $\Phi'$ is not locally irreducible we apply Case \ref{reduce}, and otherwise we have interlacing by the Big and Moderate Faces Corollary (\ref{BMFlemma}) and use the Interlaced Faces Lemma (\ref{interlacing}), reducing the number of antifaces in either case.
\end{subcase}

\begin{subcase}\label{onevert}
Suppose there is exactly one vertex of $K$ with one or more incident loops. 

\begin{subsubcase}\label{onenhbr} Suppose that the one vertex with loops has only one neighbor in $K$.
In other words, $\only A \ne \emptyset$ for a unique $A \in \cA$ and $AB \ne \emptyset$ for a unique $B \in \cA$, $B \ne A$.

Since $|V(A)| \ge n-k \ge k+1$, every edge of $K$ not in $V(A)$ must share an end with an edge in $V(A)$, so every edge of $K$ not in $V(A)$ must be incident with $B$.  Thus, $V(D) = E(K) = \only A \cup AB \cup B\ov A$.
Since $K$ is connected and $|\cA| \ge 3$, there must be $C \in \cA$ adjacent to $B$ in $K$.  Since $v \in BC$ is nonadjacent to at most $k$ other edges of $K$, $|\only A| \le k$ and so $|V(B)| \ge n-k$.
The right side of Figure \ref{touchgraph34} shows the structure of $K$ in this situation.
We blow up $C$ using $B$ to give $\Phi'$ containing antifaces $A, B', C'$ with $|V(A)| \ge n-k$ and $|V(B')|, |V(C')| \ge 2k+3$.
If $\Phi'$ is not locally irreducible we apply Case \ref{reduce}, and otherwise we have interlacing by the Big and Moderate Faces Corollary (\ref{BMFlemma}) and use the Interlaced Faces Lemma (\ref{interlacing}), reducing the number of antifaces in either case.
\end{subsubcase}

\begin{subsubcase}\label{finalcase}
Suppose there is exactly one vertex of $K$ with one or more incident loops, and this vertex has at least two neighbors in $K$.
In other words, $\only A \ne \emptyset$ for a unique $A \in \cA$, and there are at least two $P \ne A$ with $AP \ne \emptyset$.
This is our final case, so if we can modify the embedding and obtain an earlier case, we are finished.

Choose $B \in \cA$ with $AB \ne \emptyset$.  Blow up $B$ using $A$, obtaining $\Phi'$ (and touch graph $K'$) with new antifaces $A', B'$ with $|V(A')|, |V(B')| \ge 2k+3$.
If $\Phi'$ is not described by Case \ref{finalcase}, we can apply a previous case to reduce the number of antifaces.  Thus, we may assume $\Phi'$ is locally irreducible, there is exactly one vertex of $K'$ with one or more incident loops, and this vertex has at least two neighbors in $K'$.  The vertex of $K'$ with incident loops must be one of $A'$ or $B'$, say $A'$, because blowing up $B$ using $A$ cannot create loops at other vertices of $K$. Thus $|V(A')| \geq n-k$.  Let $C$ be a neighbor of $A'$ other than $B'$.  Blow up $C$ using $A'$, obtaining $\Phi''$ (and touch graph $K''$) with antifaces $A'', C''$ with $|V(A'')|, |V(C'')| \ge 2k+3$.  Again, we may assume that $\Phi''$ is described by Case \ref{finalcase}, so there is exactly one vertex of $K''$ with one or more incident loops.
This must be one of $A''$ or $C''$, say $A''$. 
Therefore, $\Phi''$ has antifaces $A'', B', C''$ with $|V(A'')| \ge n-k$ and $|V(B')|, |V(C'')| \ge 2k+3$.  Hence, there is interlacing by the Big and Moderate Faces Corollary (\ref{BMFlemma}), and we can use the Interlaced Faces Lemma (\ref{interlacing}) to reduce the number of antifaces.
\end{subsubcase}
\end{subcase}
\end{case}

Thus, we can always reduce $|\cA|$ if $|\cA| \ge 3$, and by repeated reductions we obtain an embedding $\Phi_0$ with $|\cA| = 1$ or $2$ antifaces and with $\cC$ as the set of profaces.

We now verify that the embedding we have obtained satisfies all the conclusions of the theorem.
Consider the given digraph $D$ and the given directed circuit decomposition $\cC$, and take any directed embedding of $D$ in which $\cC$ is a subset of the faces.  By Observation \ref{diremb-relcd} this is orientable, we can choose an orientation so that $\cC$ is the collection of profaces, and if $\cW$ is the collection of other faces then $|\cW|$ has the same parity as $|V(D)|+|A(D)|+|\cC|$.
Hence, $|\cW| \ge 1$ or $2$ depending on whether $|V(D)+|A(D)|+|\cC|$ is odd or even, respectively.

The embedding $\Phi_0$ is an example of an embedding described in the previous paragraph, with $\cW = \cA$, and it achieves the lower bound on $|\cW|$.  It therefore minimizes the number of faces, and hence maximizes the genus, among all directed embeddings of $D$ in which all elements of $\cC$ are faces.
\end{proof}

The proof of Theorem \ref{densemain} describes a procedure for reducing the number of antifaces in an embedding, until only one or two remain.  It is easy to derive an algorithm from this proof.
The key manipulation of embeddings that we use is the Three Face Lemma (\ref{3face}), which can be implemented very simply using the rotation scheme representation of an embedding.
Almost all of the steps in the proof, and the supporting results in Sections \ref{sec:term} and \ref{SupLemmas}, involve explicit constructions that are easily implemented in polynomial time.
The one possible exception is our use of the Bipartite Degeneracy Lemma (\ref{bipdegen}): we use existence of a subgraph of minimum degree at least $d+1$ in a graph with sufficiently many edges.  However, it is not difficult to make this constructive: if a graph is known to be not $d$-degenerate, then a nonempty subgraph with minimum degree at least $d+1$ can be obtained by repeatedly deleting vertices of degree at most $d$ until it is impossible to continue.
Our proof of Theorem \ref{densemain} therefore yields a polynomial time algorithm to find the promised embedding.

\section{Conclusion}\label{sec:conclusion}

Naturally, we would like to improve our main result by weakening the required degree condition.  This seems difficult to do using our current arguments.  The hypothesis $\delta(\usg{D}) \ge (4n+2)/5$, or equivalently $n \ge 5k+7$, is used in a tight way in many places in our proof.  Every time we use the Blow Up Lemma (\ref{blowup}) to create two moderate-sized faces so we can apply the Big and Moderate Faces Corollary (\ref{BMFlemma}), in Cases \ref{noloopstar} and \ref{loop} of the proof of Theorem \ref{densemain}, we require $n \ge 5k+7$ to guarantee that the moderate-sized faces have at least $2k+3$ vertices.  We also require $n \ge 5k+7$ for Case \ref{noloopnostar}: Case \ref{mediumAB} requires $n \ge 5k+7$ to handle the situation when $|AB|=3k+2$, and we cannot extend the arguments in the other subcases of Case \ref{noloopnostar} to handle this situation.

However, we have no evidence to suggest that the degree condition in Theorem \ref{densemain} is best possible.  There may be a weaker condition that allows us to construct an oriented directed embedding of an eulerian digraph $D$ with a specified directed circuit decomposition $\cC$ as the profaces and just one or two antifaces.  In particular, this condition might give Theorem \ref{dirlatin} on latin squares as a corollary. The condition may even be much weaker, involving only a constant lower bound on minimum degree (of $D$ or $\usg{D}$), possibly combined with a connectivity condition such as a constant lower bound on edge-connectivity.  We only have very weak restrictions on what these bounds could be.  Examples discussed in \cite[Section 3]{EE-M2mod4} show that there are $4$-edge-connected $4$-regular (non-simple) graphs with no bi-eulerian embedding, and examples from \cite{EGS20} and \cite[Section 5]{EE-M2mod4} show that there are $4$-edge-connected $6$-regular graphs with a triangular decomposition that cannot be completed to an orientable embedding by adding at most two more faces.  Digraph examples can be derived from these by suitably directing the edges.

For further discussion of related problems, such as determining maximum genus for orientable directed embeddings of a given eulerian digraph, we refer the reader to \cite{EE-M2mod4}.

We close by noting that the general problem of finding spanning walks in graphs subject to various restrictions arises in many settings. Our work here was motivated by DNA self-assembly problems, but protein self-assembly problems, which are also related to graph embeddings but require different conditions, have also been studied, for example in \cite{FPR14}.  In addition to these applications involving biological structures, and to illustrate the range of these spanning walk problems, there is also the recent work \cite{DKL24, DKLpre}.  These papers give more variations, such as allowing edges to be repeated multiple times in any direction with different turning constraints from the topologically motivated ones given here, with application to robotics.   We expect that the interplay among these many and varied settings and problems will prove both interesting and productive.

\printbibliography

\end{document}